\newcounter{notes}%
\newcommand{\marginnote}[1]{
\refstepcounter{notes}  
\nolinebreak
$\hspace{-5pt}{}^{\text{\tiny \rm \arabic{notes}}}$
\marginpar{\tiny \arabic{notes}) #1}
}
\newtheorem{cor}{Corollary}[section]
\newtheorem{theorem}[cor]{Theorem}
\newtheorem{prop}[cor]{Proposition}
\newtheorem{lemma}[cor]{Lemma}
\theoremstyle{definition}
\newtheorem{defi}[cor]{Definition}
\theoremstyle{remark}
\newtheorem{remark}[cor]{Remark}
\newtheorem{example}[cor]{Example}
\newcommand{\cH}{{\mathcal H}}
\newcommand{\C}{{\mathbb C}}
\newcommand{\HH}{{\mathbb H}}
\newcommand{\R}{{\mathbb R}}
\newcommand{\Z}{{\mathbb Z}}
\newcommand{\bH}{\mathbb H}
\newcommand{\bS}{\mathrm S}
\newcommand{\Ad}{\mathrm{Ad}}
\newcommand{\Id}{\mathrm{Id}}
\newcommand{\dev}{\mbox{dev}}
\newcommand{\sym}{\textit{sym}}
\newcommand{\cotan}{\mbox{cotan}}
\newcommand{\Isom}{\mathrm{Isom}}
\newcommand{\Conv}{\mathrm{Conv}}
\newcommand{\SO}{\mathrm{SO}}
\renewcommand{\O}{\mathrm{O}}
\newcommand{\so}{\mathfrak{so}}
\newcommand{\PGL}{\mathrm{PGL}}
\newcommand{\GL}{\mathrm{GL}}
\newcommand{\Sp}{\mathrm{Sp}}
\newcommand{\U}{\mathrm U}
\newcommand{\AdS}{\mathrm{AdS}}
\newcommand{\dS}{\mathrm{dS}}
\newcommand{\Ein}{\mathrm{Ein}}
\newcommand{\End}{\mathrm{End}}
\newcommand{\Proj}{\mathbf P}
\newcommand{\Span}{\mathrm{Span}}
\newcommand{\Hom}{\mathrm{Hom}}
\newcommand{\Grad}{\mathrm{Grad}}
\newcommand{\secondFF}{\mathrm{II}}
\renewcommand{\tilde}{\widetilde}
\newcommand{\equaldef}{\overset{\mathrm{def}}{=}}
\newcommand{\set}[2]{\left\lbrace #1 \,\middle|\, #2 \right\rbrace}
\title[Gromov-Thurston manifolds]{Gromov-Thurston manifolds and anti-de Sitter geometry}
\author{Daniel Monclair}
\address{Daniel Monclair: Université Paris-Saclay, Laboratoire de Mathématiques d'Orsay, 91405 Orsay, France
}
\email{daniel.monclair@universite-paris-saclay.fr}
\author{Jean-Marc Schlenker}
\address{Jean-Marc Schlenker:
University of Luxembourg, FSTM, Department of Mathematics,
Maison du nombre, 6 avenue de la Fonte,
L-4364 Esch-sur-Alzette, Luxembourg}
\email{jean-marc.schlenker@uni.lu}
\thanks{J.-M. S. was partially supported by FNR project O20/14766753.}
\author{Nicolas Tholozan}
\address{Nicolas Tholozan: CNRS, \'ENS-PSL, 45 rue d'Ulm, 75005 Paris, France}
\email{nicolas.tholozan@ens.fr}
\date{v1, \today}
\begin{document}
\begin{abstract}
  We consider hyperbolic and anti-de Sitter (AdS) structures on $M\times (0,1)$, where $M$ is a $d$-dimensional Gromov--Thurston manifold. If $M$ has cone angles greater than $2\pi$, we show that there exists a ``quasifuchsian'' (globally hyperbolic maximal) AdS manifold such that the future boundary of the convex core is isometric to $M$. When $M$ has cone angles less than $2\pi$, there exists a hyperbolic end with boundary a concave pleated surface isometric to $M$.

  Moreover, in both cases, if $M$ is a Gromov--Thurston manifold with $2k$ pieces (as defined below), the moduli space of quasifuchsian AdS structures (resp. hyperbolic ends) satisfying this condition contains a submanifold of dimension $2k-3$.

  When $d=3$, the moduli space of quasifuchsian AdS (resp. hyperbolic) manifolds diffeomorphic to $M\times (0,1)$ contains a submanifold of dimension $2k-2$, and extends up to a ``Fuchsian'' manifold, that is, an AdS (resp. hyperbolic) warped product of a closed hyperbolic manifold by~$\R$.

  We use this construction of quasifuchsian AdS manifolds to obtain new compact quotients of $\O(2d,2)/\U(d,1)$. The construction uses an explicit correspondence between quasifuchsian $2d+1$-dimensional AdS manifolds and compact quotients of $\O(2d,2)/\U(d,1)$ which we interpret as the space of timelike geodesic Killing fields of $\AdS^{2d+1}$.
  
\end{abstract}

\maketitle

\tableofcontents

\section{Introduction and main results}

Gromov and Thurston constructed in \cite{gromov-thurston} families of closed manifolds of dimension at least~$4$ which carry negatively curved Riemannian metrics but do not admit any locally homogeneous metric.

Roughly speaking, these manifolds are obtained by taking ramified covers and quotients of certain closed hyperbolic manifolds which admit a dihedral group of symmetries generated by two reflections along 
totally geodesic hypersurfaces, see Section \ref{sc:2}. In particular, they carry a natural hyperbolic metric with a cone singularity of rational angle along a totally geodesic submanifold of codimension 2. Using arguments based on Mostow's rigidity in codimension 1, Gromov and Thurston prove that they cannot carry a smooth hyperbolic metric. However, their very geometric origin suggests that one might endow them with geometric structures of a ``weaker'' type.

Indeed, Kapovich proved in \cite{kapovich:gromov-thurston} that Gromov--Thurston manifolds with cone singularity of angle less than $2\pi$ carry a convex projective structure, namely, that they are quotients of a convex open subset of a projective space by a discrete group of transformations. In particular, their fundamental group admits quasi-isometric embeddings in a real linear group.

In a similar spirit, we will show here that, if $M$ is a Gromov--Thurston manifold with cone singularity larger than $2\pi$, then $M\times \R$ carries a \emph{globally hyperbolic maximal Cauchy compact anti-de Sitter structure}, later abreviated in GHMC AdS structure. Following a recent trend, we will also use the term ``quasifuchsian AdS manifold'', which brings to mind the analogy between those AdS manifolds and quasifuchsian hyperbolic manifolds (see \cite{mess,mess-notes}). This provides exotic examples of such manifolds and answers negatively to Questions 5.1 and 5.2 of the survey \cite{adsquestions}. Note that counter-examples to these questions were also constructed by Lee--Marquis \cite{lee-marquis} in dimension up to $8+1$ using reflection groups.

By the work of Guéritaud--Guichard--Kassel--Wienhard \cite{GGKW17}, our construction of exotic AdS quasifuchsian groups in dimension $2d+1$ also provides examples of exotic compact quotients of the homogeneous spaces $\O(2d,2)/\U(d,1)$. These are, to our knowledge, the first examples of discrete groups acting properly discontinuously and cocompactly on a homogeneous space of reductive type which are not isomorphic to a uniform lattice in some other Lie group. We will describe an explicit geometric relation between these two objects in Section \ref{sc:clifford}.

In dimension $3$, though Gromov--Thurston's construction still makes sense, their manifolds also carry a smooth hyperbolic metric, according (for instance) to Perelman's geometrization 
theorem. In that case, using Hodgson--Kerckhoff's results on conical hyperbolic metrics in dimension $3$, we will construct a family of GHMC AdS structures on $M\times \R$ that interpolates between the ``Fuchsian structure'' and our general construction. We also show that this family ``integrates'' linear combinations of infinitesimal ``bending'' deformations of the representation $i:\pi_1(M) \to \SO(3,1)$ within $\SO(3,2)$.

\subsection{AdS structures associated to Gromov--Thurston manifolds} \label{ss:IntroAdSStructures}



Recall that a Lorentzian manifold is called \emph{globally hyperbolic Cauchy compact} if it admits a compact \emph{Cauchy hypersurface}, i.e. a topological 
hypersurface intersecting any inextendible timelike curve at a single point. It is further called \emph{maximal} (abreviated in GHMC) if it is maximal for the inclusion among such spaces.

If $N$ is a GHMC \emph{anti-de Sitter manifold} (i.e. a GHMC Lorentzian manifold of constant sectional curvature $-1$) of dimension $d+1$, then $N$ is the quotient of a convex domain of the \emph{anti-de Sitter space} $\AdS^{d+1}$ by a discrete subgroup $\Gamma$ of $\Isom(\AdS^{d+1}) \simeq \O(d,2)$ (see \cite{mess,barbot_causal}). If furthermore $N$ admits a \emph{convex} Cauchy hypersurface, then the group $\Gamma$ is Gromov-hyperbolic, its embedding into $\O(d,2) \subset \GL(d+2,\R)$ has a refined discreteness property called \emph{$P_1$-Anosov} (see Theorem \ref{thm-characterization AdS convex}) and, by a theorem of Barbot \cite{barbot_deformations}, any continuous deformation of the inclusion in $\Hom(\Gamma, \O(d,2))$ is again the holonomy of a GHMC AdS manifold homeomorphic to $N$. In that case, we will call $N$ a \emph{quasifuchsian AdS manifold}. We give more details on AdS geometry in Section \ref{sc:globally}.

We recall in Section \ref{sc:2} the construction of Gromov--Thurston cone-manifolds. The main point that we will use here is that those are cone-manifolds of dimension $d$ (for $d\geq 3$), obtained by gluing $2k$ isometric ``pieces'' along a manifold of dimension $d-2$. Each piece is a hyperbolic manifold with ``corner'', whose boundary is composed of two totally geodesic hypersurfaces meeting along a manifold of codimension $2$ with an interior dihedral angle of $\pi/n$. Gromov--Thurston manifolds thus carry a hyperbolic metric with a cone singularity along a totally geodesic submanifold of codimension $2$, and the angle around this cone singularity can be smaller or greater than $2\pi$, depending on whether $k<n$ or $k>n$. 

The main result of this paper is the following:

\begin{theorem} \label{tm:existence_ads}
Let $(M,g)$ be a $d$-dimensional Gromov--Thurston cone-manifold with cone angle larger than  $2\pi$ at the singularity, $d\geq 3$. Then there is a quasifuchsian AdS spacetime $N$ of dimension $d+1$ for which the future boundary of the convex core is isometric to $(M,g)$.
\end{theorem}

There is some flexibility in our construction which allows to construct a non-trivial moduli space of such quasifuchsian AdS spacetimes.

\begin{theorem} \label{tm:dim_ads}
Let $M$ be a $d$-dimensional Gromov--Thurston cone-manifold with $2k$ pieces, with cone angle larger than $2\pi$, $d\geq 3$. Then there is a $2k-3$ parameter family of quasifuchsian AdS manifolds for which the future boundary of the convex core is isometric to $M$.
\end{theorem}

Theorem \ref{tm:existence_ads} is partly motivated by Questions 5.1 and 5.2 of the survey \cite{adsquestions}. When this survey was written, the only known examples of GHMC AdS manifolds were either deformations of \emph{Fuchsian} AdS manifolds -- those admitting a totally geodesic Cauchy hypersurface -- or quotients of an open convex domain of $\AdS^{d+1}$ by a uniform lattice in $\O(p,1)\times \O(q,1)\subset \O(d,2)$, $p+q=d$. In particular, such manifolds are always homeomorphic to $M\times \R$ with $M$ a compact quotient of $\HH^p\times \HH^q$ by a uniform lattice. Question 5.1 of the survey \cite{adsquestions} asked whether these are all the possible topologies, while Question 5.2 asked whether every GHMC manifold could be deformed to one of these standard ones. In the same direction, Barbot--Mérigot \cite[Question 8.7]{barbot-merigot} asked whether any AdS quasifuchsian manifold is homeomorphic to the product of a hyperbolic manifold with $\R$. Note that the answer to these questions is known to be positive in dimension $2+1$ by the work of Mess (see \cite{mess}).

Question 5.1 (and thus Question 5.2) was answered negatively by Lee and Marquis \cite{lee-marquis} in dimension $4+1$ to $8+1$: they constructed Coxeter reflection groups which are not hyperbolic lattices but admit AdS quasifuchsian representations. However, as often with reflection groups, these can only exist up to a certain dimension. In contrast, Theorem \ref{tm:existence_ads} provides a negative answer to Question 5.1 in every dimension $d+1 \geq 4+1$. Indeed, Gromov--Thurston manifolds of dimension $d\geq 4$ are not diffeomorphic to quotients of $\HH^p \times \HH^q$. In fact, we have the stronger result:

\begin{theorem}[Gromov--Thurston]
The fundamental group of a Gromov--Thurston manifold $M$ of dimension $d\geq 4$ is not commensurable to a lattice in any Lie group.
\end{theorem}

\begin{remark} \label{rem-GTnotLattice}
One easily sees that $\pi_1(M)$ cannot be a lattice in a Lie group with non-trivial solvable radical. Since $\pi_1(M)$ surjects onto a uniform hyperbolic lattice, Margulis superrigidity implies that $\pi_1(M)$ is not a lattice in a higher rank semisimple Lie group either.

With arguments involving Mostow's rigidity, Gromov and Thurston prove that it is not a lattice in $\Isom(\HH^d)$. They also construct a Riemannian metric on $M$ with sectional curvature pinched between $-1-\epsilon$ and $-1$, which cannot exist on quotients of other rank $1$ symmetric spaces by a result of Yau--Zheng \cite{pinched_kahler}. 
To construct such a metric, one needs the additional assumption that the ``injectivity radius'' of the singular locus is sufficiently large (so that one has enough room to smoothen the singular hyperbolic metric).

However, Giralt proved in her thesis \cite{giralt} that $\pi_1(M)$ is always cubulable and virtually special. By a theorem of Delzant--Py \cite{DelzantPy}, it is thus not isomorphic to a complex hyperbolic lattice, without any additional geometric assumption. Another consequence is that $\pi_1(M)$ has the Haagerup property (see \cite{measured_walls}). This rules out the possibility that $\pi_1(M)$ be a lattice in the remaining rank $1$ simple Lie groups $\Sp(n,1)$ and $\mathrm F_4^{-20}$, which have Kazhdan's property (T).
\end{remark}

As another consequence of Theorem \ref{tm:existence_ads}, one obtains the existence of nice linear representations of fundamental groups of Gromov--Thurston cone-manifolds with cone angle larger than $2\pi$. 

\begin{cor}
Let $M$ be a Gromov--Thurston cone-manifold of dimension $d$ with cone angle larger than $2\pi$. Then $\pi_1(M)$ admits a quasi-isometric embedding into $\O(d,2)$. In particular, $\pi_1(M)$ is linear.
\end{cor}

\begin{remark}
Kapovich, on the other hand, constructed \emph{convex projective structures} on Gromov--Thurston cone-manifolds with cone angle less than $2\pi$ \cite{kapovich:gromov-thurston}. Combining his result with ours, we get that fundamental groups of $d$-dimensional Gromov--Thurston manifolds embed quasi-isometrically in $\PGL(d+2,\R)$ without any angle condition.
\end{remark}

\begin{remark}
In fact, both our representations and those of Kapovich satisfy a stronger form of quasi-isometric property called \emph{$P_1$-Anosov property}. We refer to \cite{KLP, DGK:convexProjective, GGKW17} for more details on $P_1$-Anosov representations.
\end{remark}

\begin{remark}
Giralt's cubulation theorem combined with the work of 
Haglund--Wise \cite{haglund_wise}  implies that fundamental groups of Gromov--Thurston manifolds virtually embed into right-angled Artin groups and are thus linear. These arguments, however, give little control on the dimension of a faithful linear representation.
\end{remark}

Yet another consequence of Theorem \ref{tm:existence_ads} is that a quasifuchsian $\AdS$ manifold of dimension $d+1$ does not always contain a Cauchy hypersurface whose geometry is intrinsically locally isometric to the hyperbolic space $\HH^d$ when $d\geq 3$. The fact that this is true for $d=2$ was crucial in the proof of the rigidity theorem in \cite{glorieux_monclair}, stating that the \emph{limit set} of a quasifuchsian $\AdS$ manifold of dimension $2+1$ has \emph{Lorentzian Hausdorff dimension} smaller than $1$, with equality only in the Fuchsian case. Such a statement is believed to be true in higher dimension, but Theorem \ref{tm:existence_ads} confirms that the techniques used in \cite{glorieux_monclair} cannot be used in this case. 

\subsection{Exotic compact Clifford--Klein forms}

A \emph{compact Clifford--Klein form} of a homogeneous space $G/H$ is a quotient of $G/H$ by a discrete subgroup $\Gamma \subset G$ acting properly discontinuously and cocompactly on $G$.

Guéritaud--Guichard--Kassel--Wienhard remarked in \cite{GGKW17} that AdS quasifuchsian subgroups $\Gamma$ of $\O(2d,2)$ act properly discontinuously and cocompactly on the pseudo-Riemannian symmetric space $\mathrm O(2d,2)/\U(d,1)$. Hence, we obtain as a direct consequence of Theorem \ref{tm:existence_ads}:

\begin{cor} \label{cor - CliffordKleinForm}
Let $M$ be a Gromov--Thurston cone-manifold of dimension $2d$ with cone angle larger than $2\pi$. Then there exists a faithful representation $\rho: \pi_1(M)\to \mathrm O(2d,2)$ such that $\rho(\pi_1(M))$ acts properly discontinuously and cocompactly on $\mathrm O(2d,2)/\U(d,1)$.
\end{cor}

Very few pseudo-Riemannian symmetric spaces $G/H$ are known to admit compact quotients that are \emph{non-standard}, i.e. where $\Gamma$ is not commensurable to a lattice in a connected subgroup of $G$. The space $\O(2d,2)/\U(d,1)$ is one of them. So far, however, these non-standard quotients were obtained as deformations of the standard ones (corresponding to AdS Fuchsian manifolds). Together with the work of Lee--Marquis \cite{lee-marquis}, Corollary \ref{cor - CliffordKleinForm} thus provides the first \emph{exotic} examples of compact Clifford--Klein forms of $\O(2d,2)/\U(d,1)$ (i.e. which are not deformations of standard ones). In fact, to our knowledge, these are the first examples of a compact Clifford--Klein form $\Gamma \backslash G/H$ for which $\Gamma$ is not virtually isomorphic to a lattice in some Lie group.

Guéritaud--Guichard--Kassel--Wienhard's argument to associate compact Clifford--Klein forms to AdS quasifuchsian manifolds is rather indirect. In Section \ref{sc:initial}, we provide a direct, geometric explanation of this correspondence, which also allows for a more precise analysis of the Clifford-Klein forms which are obtained. We start by interpreting $\mathrm O(2d,2)/\U(d,1)$ as the space of timelike unit geodesic Killing vector fields in $\AdS^{2d+1}$. The correspondence follows from the fact that given a smooth, complete strictly convex Cauchy hypersurface $\mathcal H$ in a quasifuchsian AdS manifold of dimension $2d+1$, any unit timelike geodesic Killing vector field on $\AdS^{2d+1}$ is orthogonal to the lift to $\AdS^{2d+1}$ of $\mathcal H$ at a unique point. This gives a natural projection of the Clifford--Klein form to the Cauchy hypersurface.

\begin{theorem}
Let $N$ be an AdS quasifuchsian manifold of dimension $2d+1$ with fundamental group $\Gamma\subset \mathrm O(2d,2)$ and $\mathcal H$ a Cauchy hypersurface. Then there exists a smooth fibration $\pi:\Gamma\backslash \mathrm O(2d,2)/\U(d,1)\to \mathcal H$ whose fibers are translates of the compact homogeneous subspace $\mathrm O(2d)/\U(d)$.
\end{theorem}

This Theorem confirms, for the case of $\mathrm O(2d,2)/\U(d,1)$ a general conjecture formulated by the third author in \cite[Section 8]{volume_non_existence}.

\subsection{Hyperbolic ends associated to Gromov--Thurston manifolds}


Our anti-de Sitter geometrization of Gromov--Thurston manifolds with cone angle larger than $2\pi$ has a hyperbolic counterpart when the cone angle is smaller than $2\pi$. In that case, one can realize a Gromov--Thurston manifold $M$ of dimension $d$ as the boundary of a \emph{hyperbolic end} of dimension $d+1$.



\begin{defi}
A \emph{hyperbolic end} of dimension $d+1$ is a manifold with boundary of the form $M\times [0,+\infty)$ with $M$ closed of dimension $d$ equipped with a hyperbolic metric, such that a neighbourhood of $M\times \{0\}$ is developped to the exterior of a convex hypersurface in $\HH^{d+1}$, and which is maximal (in the sense of inclusion) under this condition.
\end{defi}

A simple example of a hyperbolic end is provided by the closure a connected component of the complement of the convex core in a quasifuchsian hyperbolic 3-dimensional manifold. 

The following theorems are the hyperbolic counterparts of Theorems \ref{tm:existence_ads} and \ref{tm:dim_ads}

\begin{theorem} \label{tm:existence_hyp}
  Let $M$ 
  be a $d$-dimensional Gromov--Thurston cone-manifold with cone angle smaller than $2\pi$ at the singularity. Then there exists a hyperbolic end $N$ of dimension $d+1$ for which the boundary is isometric to $M$.
\end{theorem}



Moreover, if $k\geq 2$, then there is a non-trivial moduli space of deformations of the hyperbolic ends realizing $M$ as their concave pleated boundary. 

\begin{theorem} \label{tm:dim_hyp}
Let $M$ be a Gromov--Thurston cone-manifold with $2k$ pieces, with cone angles smaller than $2\pi$. Then there is a $2k-3$ parameter family of hyperbolic ends for which the concave pleated boundary is isometric to $M$.
\end{theorem}

\begin{remark}
The construction of hyperbolic ends associated to Gromov--Thurston manifolds is already suggested in the initial paper of Gromov--Thurston and was a starting point for Kapovich's investigation of the geometry of these manifolds. Though it might be considered folklore knowledge, its details do not seem to appear in the litterature, hence our decision to include them here.
\end{remark}

A hyperbolic end $N= M\times [0,+\infty)$ admits a conformal compactification obtained by adding a ``boundary at infinity'' $\partial_\infty N = M\times \{+\infty\}$. This boundary admits an atlas with charts in $\partial_\infty\HH^{d+1}\simeq \mathbb S^d$ and transitions maps in $\O(d+1,1)\simeq \mathrm{M\ddot ob}(\mathbb S^d)$, providing $M$ with a conformally flat structure. We thus have the following:

\begin{cor}
  Let $M$ be a $d$-dimensional Gromov--Thurston cone-manifold with cone angle smaller than $2\pi$. Then $M$ admits a conformally flat metric.
\end{cor}

\begin{remark}
One can show that different hyperbolic ends yield different conformal structures (see Theorem \ref{tm:kp} and \cite{kulkarni-pinkall}). Hence we also have a $2k-3$-dimensional moduli space of flat conformal structures on $M$.
\end{remark}

Finally, each flat conformal structure on $M$ is also the conformal boundary at infinity of a unique maximal globally hyperbolic \emph{de Sitter structure} on $M\times \R$, which is in some sense ``dual'' to the hyperbolic end. Hence we get:

\begin{cor}
Let $M$ be a $d$-dimensional Gromov--Thurston cone-manifold with cone angle smaller than $2\pi$. Then there exists a GHMC de Sitter spacetime diffeomorphic to $M\times \R$. 
\end{cor}

We refer to Scannell's thesis \cite{scannell} for more details on the correspondence between hyperbolic ends, conformally flat manifolds and GHMC de Sitter spacetimes.


An important difference between the AdS and hyperbolic settings is the following: for GHMC AdS manifolds $M\times \R$, the fact that the universal cover $\tilde{M}$ is developed to a spacelike hypersurface forces this development to be an embedding and the holonomy $\rho:\pi_1(M) \to \O(d,2)$ to be discrete and faithful. In contrast, if $M\times [0,+\infty)$ is a hyperbolic end, the development of $\tilde M$ need not be an embedding and the holonomy representation is not in general discrete and faithful. Gromov and Thurston remark in their paper that, for cone singularities sufficiently close to $2\pi$ and assuming that the singular locus of $M$ has a sufficiently large injectivity radius, one could construct hyperbolic ends for which $\tilde M$ is quasi-isometrically embedded in $\HH^{d+1}$ and the holonomy $\rho:\pi_1(M) \to \O(d+1,1)$ is convex-cocompact. We will not prove this result which is beyond the scope of this paper.

\subsection{Deformations in dimension $3+1$}

In this section we consider the case $d=3$. Contrary to higher dimension, Gromov--Thurston manifolds of dimension $3$ do carry smooth hyperbolic structures. Thanks to the rich deformation theory for hyperbolic cone-manifolds in this dimension~\cite{HK}, we can provide more precise results and show that our AdS spacetimes with Gromov--Thurston Cauchy hypersurfaces (as defined in Section 2) can be deformed continuously to Fuchsian AdS spacetimes. 



\begin{theorem} \label{tm:31_ads}
  Let $M$ be a 3-dimensional Gromov--Thurston cone-manifold with $2k$ pieces, with cone angle larger than $2\pi$. Then there is a connected $2k-2$ parameter family of quasifuchsian AdS spacetimes diffeomorphic to $M\times (0,1)$ containing a $2k-3$-dimensional family of spacetimes with future boundary of the convex core isometric to $M$ and a point corresponding to a Fuchsian AdS spacetime.
\end{theorem}



\begin{theorem} \label{tm:31_hyp}
Let $M$ be a 3-dimensional Gromov--Thurston cone-manifold with $2k$ pieces, with cone angle smaller than $2\pi$. Then there is a connected $2k-2$ parameter family of hyperbolic ends diffeomorphic to $M\times (0,\infty)$ containing a $2k-3$-dimensional family of hyperbolic ends with pleated boundary isometric to $M$ and a point corresponding to a Fuchsian end.
\end{theorem}



\subsection{Integrating bending deformations}

In dimension $d=3$, Theorems \ref{tm:31_ads} and \ref{tm:31_hyp}  can be interpreted in terms of integration of infinitesimal deformations of the holonomy representation of a hyperbolic $3$-dimensional manifold in $\O(3,2)$ and $\O(4,1)$ respectively. 

When a $d$-dimensional hyperbolic manifold $M$ contains a 2-sided totally geodesic hypersurface, then its holonomy representation can be deformed into larger Lie groups such as $\O(d+1,1)$, $\O(d,2)$ or  $\GL(d+1,\R)$, via some generalized ``bending'' (see for instance \cite{Johnson1987}). It was already noted in \cite{Johnson1987} that if $M$ contains $r$ such disjoint hypersurfaces, then the deformation space has dimension at least~$r$. On the other hand, when the hypersurfaces intersect, for $d\geq 3$, the deformation space might be singular, and some infinitesimal deformations given by sums of  infinitesimal bending deformations along two intersecting hypersurfaces may not be integrated into actual deformations.

Theorems \ref{tm:31_ads} and \ref{tm:31_hyp} show that, in dimension $d=3$, a significant degree of flexibility exists to deform Fuchsian representations of Gromov--Thurston manifolds.

\begin{theorem} \label{tm:bending3d-ads}
  Let $M$ be a 3-dimensional Gromov--Thurston manifold with $2k$ pieces and no cone singularity (that is, total angle $2\pi$ at the gluing curve). Then there exists a neighbourhood $U$ of $0$ in $\R^k$ such that, for every $(\theta_1, \cdots, \theta_k)\in U$, there exists a quasifuchsian AdS $4$-manifold $N_\theta$ with a Cauchy hypersurface which is a Gromov--Thurston manifold homeomorphic to $M$, consisting of $2k$ totally geodesic pieces pleated at angles $t\theta_1, t\theta_2, \cdots, t\theta_k, t\theta_1, \cdots, t\theta_k$ (in cyclic order around the singular curve). 
\end{theorem}

\begin{theorem} \label{tm:bending3d-hyp}
  Let $M$ be a 3-dimensional Gromov--Thurston manifold, with $2k$ pieces and no cone singularity (that is, total angle $2\pi$ at the gluing curve). Then there exists a neighbourhood $U$ of $0$ in $\R^k$ such that, for every $(\theta_1, \cdots, \theta_k)\in U$, there exists a quasifuchsian hyperbolic $4$-manifold $N_\theta$ containing a hypersurface which is a Gromov--Thurston manifold homeomorphic to $M$, consisting of $2k$ totally geodesic pieces pleated at angles $t\theta_1, t\theta_2, \cdots, t\theta_k, t\theta_1, \cdots, t\theta_k$ (in cyclic order around the singular curve). 
\end{theorem}

The holonomy of $N_\theta$ is a representation $\rho_\theta: \pi_1(M) \to \SO_\circ(3,2)$ or $\SO_\circ(4,1)$. When all but one of the $\theta_i$ vanish, the representation $\rho_\theta$ corresponds to Johnson--Millson ``bending deformation'' of $\rho_0: \pi_1(M) \to \SO_\circ(3,1)$. Theorem \ref{tm:bending3d-ads} gives a geometric construction of representations which combine several bendings along intersecting hypersurfaces. In particular, it shows that linear combinations of infinitesimal bendings can be integrated (see Section \ref{ss:IntegrationBending}). Those deformations should be compared to the stamping deformations defined by Apanasov \cite{apanasov:deformations} (see also \cite{bart-scannell}) which seem to be closely related (in the hyperbolic setting).

\subsection{Initial singularity of GHMC spacetimes}

The results presented above, concerning the induced metrics on the future boundary of the convex cores of quasifuchsian AdS spacetimes, or on the concave boundary of hyperbolic ends, have consequences for the possible geometry of the initial singularity of GHMC AdS or dS spacetimes. 

In dimension $2+1$, the geometry of the initial singularity of an AdS or dS spacetime is rather well understood, thanks to the work of Mess \cite{mess,mess-notes}. The initial singularity is the quotient of a real tree by an action of the fundamental group of the spacetime. In some cases, the initial singularity is a finite graph (the quotient of a simplicial tree by the fundamental group of the manifold) but this is rather exceptional.

In higher dimension, however, the geometric structure of the initial singularity is much more mysterious. Here we provide examples of spacetimes for which the initial singularity is remarkably simple. We do not know to what extent this phenomenon is ``generic'', or whether ``generic'' spacetimes in dimension $d+1$, for $d\geq 3$, have a much more intricate initial singularity.

\begin{theorem} \label{tm:initial-ads}
Let $M$ be a $d$-dimensional Gromov--Thurston manifold with $2k$ pieces, with cone angles larger than  $2\pi$ at the singularities. There is a $2k-3$-dimensional family of quasifuchsian AdS spacetimes of dimension $d+1$ with Cauchy hypersurfaces diffeomorphic to $M$ for which the initial singularity is a 2-dimensional cell complex, with exactly one 2-dimensional cell. 
\end{theorem}

\begin{theorem} \label{tm:initial-ds}
Let $M$ be a $d$-dimensional Gromov--Thurston manifold with $2k$ pieces, with cone angles smaller than  $2\pi$ at the singularities. There is a $2k-3$-dimensional family of GHMC $\dS$ spacetimes of dimension $d+1$ with Cauchy hypersurfaces diffeomorphic to $M$ for which the initial singularity is a 2-dimensional cell complex, with exactly one 2-dimensional cell. 
\end{theorem}

Those two statements follow from the description of the geometry of the pleated boundary (resp. the future boundary of the convex core) for the hyperbolic ends (resp. AdS manifolds) appearing in Theorem \ref{tm:dim_hyp} and Theorem \ref{tm:dim_ads}, through the duality between hyperbolic and de Sitter space, resp. between the AdS space and itself. This correspondence is briefly recalled in Section \ref{sc:initial}, where Theorem \ref{tm:initial-ads} and Theorem \ref{tm:initial-ds} are proven.

\subsection{Outline of the paper}

In Section \ref{sc:2}, we recall the construction of Gromov--Thurston manifolds. We then recall in Section \ref{sc:globally} a number of background definitions and statements that are needed, such as the key definitions of AdS geometry, hyperbolic ends, and properties of hypersurfaces in hyperbolic and AdS manifolds. We explain in particular that the data of a quasifuchsian AdS manifold with Cauchy hypersurface homeomorphic to $M$ is equivalent to the data of a \emph{spacelike embedding structure} on $M$ i.e. an atlas of local embeddings of $M$ as spacelike hypersurfaces in $\AdS$, with coordinate changes in $\Isom(\AdS)$. When $M$ is a Gromov--Thurston manifold, one is then reduced to prescribing a way to ``bend'' the hyperbolic pieces in the anti-de Sitter space.

Such bendings are parametrized by their link along the codimension $2$ singularity, which is a spacelike polygon in the de Sitter space of dimension $2$. Section \ref{sc:polygons} focuses on the geometry of polygons in the sphere and the de Sitter plane. It starts with a characterization of the infinitesimal variations of lengths and angles of spherical and de Sitter polygons, and further describes various families of polygons (equilateral polygons, polygons with a central symmetry), which give us the material to prove our main theorems. 

In Section \ref{sec:geometrization} we prove the main results of the paper concerning the geometrization of Gromov--Thurston manifolds in dimension $d$, for $d\geq 4$, while Section \ref{sc:6} contains the proofs of the main results for Gromov--Thurston manifolds in dimension $3$. Finally, Section \ref{sc:initial} is focused on the initial singularities of de Sitter and anti-de Sitter spacetimes, and Section \ref{sc:clifford} on the applications to compact Clifford--Klein forms of $\O(2d,2)/\U(d,1)$.


%


\section{Gromov--Thurston (cone-)manifolds} 
\label{sc:2}

Here we describe a fairly general version of the Gromov--Thurston construction, providing us with a family of cone-manifolds that we will ``geometrize'', in the sense that we will show that they occur as either the future boundary of the convex core of a quasifuchsian AdS manifold, or the concave boundary of a hyperbolic end.

\subsection{Hyperbolic cone-manifolds}

We first recall the definition given by Thurston \cite[Section 3]{SOP}, \cite[Def. 3.1]{boileau-leeb-porti} of a hyperbolic cone-manifold. The definition is recursive in the dimension. We briefly recall this definition here for hyperbolic, Euclidean and spherical cone-manifolds.
\begin{itemize}
\item A one-dimensional cone-manifold is simply a one-dimensional Riemannian manifold.
\item For $d\geq 2$, a $d$-dimensional spherical (resp. hyperbolic, Euclidean) cone-manifold $M$ is a compact metric space, together with a singular metric in which every point has a neighborhood isometric to $N\times [0,\epsilon]$ equipped with the singular metric $dr^2 + \sin^2(r)h$ (resp. $dr^2 + \sinh^2(r)h$, $dr^2 + r^2h$) where $N$ is a spherical cone-manifold of dimension $d-1$ equipped with the singular metric $h$.
\end{itemize}
For instance, a $2$-dimensional hyperbolic cone-manifold -- also called hyperbolic surface with cone singularities -- contains a finite set of singular points. It is hyperbolic outside of those singular points, and each singular point has a neighborhood isometric to a ``model'' which only depends on one parameter, an ``angle'' which is the length of the $1$-dimensional manifold appearing in the definition.

\subsection{Dihedral hyperbolic manifolds}

Gromov--Thurston's construction starts with the data of a closed oriented hyperbolic manifold $M$ of dimension $d$ and two isometric involutions $\sigma_1$ and $\sigma_2$ of $M$ with the following properties:
\begin{itemize}
\item The fixed loci of $\sigma_1$ and $\sigma_2$ are connected embedded totally geodesic hypersurfaces,
\item The intersection $S= {\rm Fix }\,\sigma_1 \cap {\rm Fix }\,\sigma_2$ is connected,
\item ${\rm Fix }\,\sigma_1$ and ${\rm Fix }\,\sigma_2$ intersect along $S$ with an angle $\frac{\pi}{n}$.
\item ${\rm Fix }\,\sigma_1$ and ${\rm Fix }\,\sigma_2$ are homologically trivial.
\end{itemize}

The existence of manifolds $M$ of any dimension $d\geq 2$ with those properties is proved in \cite{gromov-thurston}. Under these conditions, $\sigma_1$ and $\sigma_2$ generate a dihedral group of isometries of $M$ of order $2n$, denoted $D_n$. We denote by $R_n$ its cyclic subgroup of order $n$, spanned by $\rho =\sigma_1 \sigma_2$. We call the data of $(M,\sigma_1,\sigma_2)$ a \emph{$n$-dihedral hyperbolic manifold}.

Let $H_1\subset {\rm Fix}\,\sigma_1$ be the closure of a connected component of ${\rm Fix}\,\sigma_1\setminus S$, and $H_2\subset {\rm Fix}\,\sigma_2$ the closure of a connected component of ${\rm Fix}\,\sigma_2\setminus S$ chosen so that the oriented angle at $S$ from $H_1$ to $H_2$ is $\frac{\pi}{n}$. We then consider the copies of $H_1$ and $H_2$ under the isometry $\rho =\sigma_1 \sigma_2$ which we denote  $H_{2i+1}=\rho^i(H_1)$ and $H_{2i}=\rho^{i-1}(H_2)$ for $i=1,\dots,n-1$. Together, they divide $M$ into $2n$ pieces $V_1,\ldots, V_{2n}$ which are fundamental domains for the action of $D_n$. Note that ${\rm Fix}\,\sigma_1 = H_1 \cup H_{n+1}$ and ${\rm Fix}\,\sigma_2 = H_2 \cup H_{n+2}$.

When considering the action of the cyclic subgroup $R_n$, a fundamental domain is given by the union of two of the former small pieces, e.g. the domain bounded by $H_1$ and $H_3$ containing $H_2$ (see Figure \ref{fig:dihedralhyperbolicmanifold}).

The quotient $\overline M = R_n\backslash M$ is a topological manifold and the quotient map $M\to \overline M$ is a ramified covering of degree $n$: it is $n$ to $1$ on the complement of $S$ and injective in restriction to $S$. We still denote by $S$ its image under the quotient map. One can show (see \cite{gromov-thurston}) that $S$ bounds two codimension $1$ submanifolds with boundary $\overline H_1, \overline H_2\subset\overline M$, which are the respective projections of $H_1$ and $H_2$.

\definecolor{qqwuqq}{rgb}{0.,0.39215686274509803,0.}
\definecolor{qqqqff}{rgb}{0.,0.,1.}
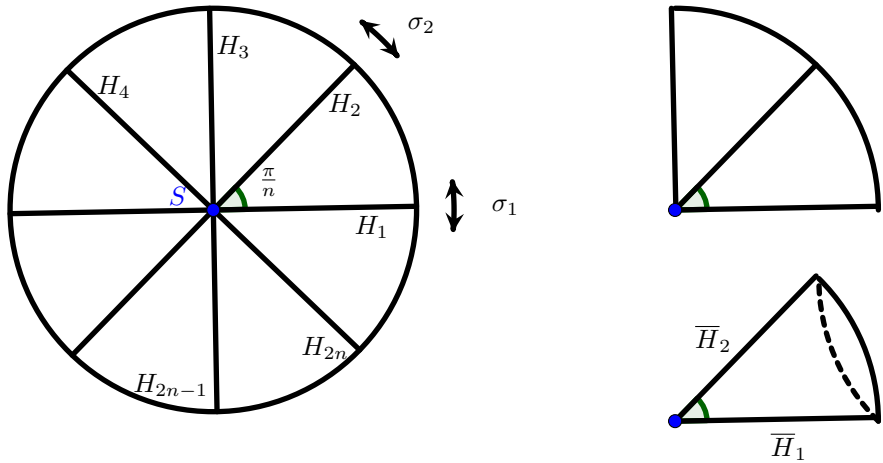
\begin{figure}[h] 
\begin{tikzpicture}[line cap=round,line join=round,>=stealth,x=1cm,y=1cm,scale=0.7]
\clip(1,-5) rectangle (26,5);
\draw [shift={(6.32,0.03)},line width=2.pt,color=qqwuqq,fill=qqwuqq,fill opacity=0.10000000149011612] (0,0) -- (1.049805166630512:0.6) arc (1.049805166630512:46.049805166630506:0.6) -- cycle; 
\draw [line width=2.pt] (2.5,-0.04)-- (10.14,0.1);
\draw [line width=2.pt] (6.32,0.03) circle (3.8206413074247103cm); 
\draw [line width=2.pt] (8.971650429449554,2.7806453788156698)-- (3.668349570550447,-2.720645378815669);
\draw[color=qqqqff] (6,-0.05) node[anchor=south east] {$S$};
\draw [fill=qqqqff] (6.32,0.03) circle (3.5pt);
\draw (7,1) node[anchor=north west] {$\frac{\pi}{n}$};
\draw (11.38,0.4) node[anchor=north west] {$\sigma_1$};
\draw (9.8,3.86) node[anchor=north west] {$\sigma_2$};
\draw (8.8,0.1) node[anchor=north west] {$H_1$};
\draw (8.3,2.4) node[anchor=north west] {$H_2$};
\draw (6.2,3.5) node[anchor=north west] {$H_3$};
\draw (3.95,2.75) node[anchor=north west] {$H_4$};
\draw (6.4,-2.9) node[anchor=north east] {$H_{2n-1}$};
\draw (7.8,-2.2) node[anchor=north west] {$H_{2n}$};
\draw[<->] [shift={(6.32,0.03)},line width=2.pt]  plot[domain=-0.09086024784723268:0.12750536117153277,variable=\t]({1.*4.525229275959396*cos(\t r)+0.*4.525229275959396*sin(\t r)},{0.*4.525229275959396*cos(\t r)+1.*4.525229275959396*sin(\t r)});
\draw[<->] [shift={(6.32,0.03)},line width=2.pt]  plot[domain=0.6924597270211887:0.9149817130980076,variable=\t]({1.*4.525229275959395*cos(\t r)+0.*4.525229275959395*sin(\t r)},{0.*4.525229275959395*cos(\t r)+1.*4.525229275959395*sin(\t r)});
\draw [line width=2.pt] (6.25,3.85)-- (6.39,-3.79);
\draw [line width=2.pt] (3.569354621184332,2.6816504294495527)-- (9.070645378815668,-2.6216504294495513);
\draw [fill=qqqqff] (6.32,0.03) circle (3.5pt);
\begin{scope}[shift={(-2,0)}]
\draw [line width=2.pt] (20.8206413074247103,0.03) arc (0:90:3.8206413074247103); 
\draw [shift={(17,0.03)},line width=2.pt,color=qqwuqq,fill=qqwuqq,fill opacity=0.10000000149011612] (0,0) -- (1.049805166630512:0.6) arc (1.049805166630512:46.049805166630506:0.6) -- cycle; 
\draw [line width=2.pt] (16.93,3.85)-- (17.015,0.03);
\draw [line width=2.pt] (17,0.03)-- (20.82,0.1);
\draw [line width=2.pt] (19.65,2.7806453788156698)-- (17,0.03);
\draw [fill=qqqqff] (17,0.03) circle (3.5pt);
\end{scope}
\begin{scope}[shift={(-2,-4)}]
\draw [line width=2.pt] (20.8206413074247103,0.03) arc (0:45:3.8206413074247103); 
\draw [line width=2.pt, dashed] (20.8206413074247103,0.03) arc (225:180:3.8206413074247103);
\draw [shift={(17,0.03)},line width=2.pt,color=qqwuqq,fill=qqwuqq,fill opacity=0.10000000149011612] (0,0) -- (1.049805166630512:0.6) arc (1.049805166630512:46.049805166630506:0.6) -- cycle; 
\draw [shift={(10,0.03)}] (8.62,-0.04) node[anchor=north west] {$\overline H_1$};
\draw [shift={(10,0.03)}] (7.2,2) node[anchor=north west] {$\overline H_2$};
\draw [line width=2.pt] (17,0.03)-- (20.82,0.1);
\draw [line width=2.pt] (19.65,2.7806453788156698)-- (17,0.03);
\draw [fill=qqqqff] (17,0.03) circle (3.5pt);
\end{scope}
\end{tikzpicture}
\caption{A $n$-dihedral manifold $M$, its fundamental piece and the quotient  $\overline M$.}
\label{fig:dihedralhyperbolicmanifold}
\end{figure}

\subsection{Gromov--Thurston manifolds}


\begin{defi}
Let $M$ be an $n$-dihedral hyperbolic manifold. For every $a\in \frac{1}{n}\mathbb N_{>0}$, we define the \emph{Gromov--Thurston} manifold $M^a$ of ramification $a$ associated to $M$ as the cyclically ramified cover of $\overline M$ along $S$ of degree $na$.
\end{defi}

More visually, $H_1, H_3, \ldots, H_{2n-1}$  cut $M$ into $n$ copies of the aforementioned fundamental piece, and $M^a$ is obtained by gluing $n a$ copies of this fundamental piece (see Figure \ref{fig:dihedralhyperbolicmanifold}).

\begin{example}
We have $M^{1/n} = \overline M$ and $M^1= M$. If $a$ is an integer, then $M^a$ is the cyclically ramified cover of $M$ along $S$ of degree $a$.
\end{example}

The hyperbolic metric $g_\bH$ on $M$ induces a singular hyperbolic metric on $M^a$ with a cone singularity of angle $2\pi a$ along $S\subset M^a$, the preimage of $S\subset \overline M$ by the covering map. In particular, for $a\geq 1$, this metric is locally $CAT(-1)$, implying that the fundamental group $\pi_1(M^a)$ is Gromov hyperbolic.

We will denote by $H_1,\ldots, H_{2k}$ the lifts of $\overline H_1$ and $\overline H_2$ to $M^a$ (in cyclic order around $S$), and denote by $V_i$ the component of $M^a \backslash \bigcap_{i=1}^{2k} H_i$ bounded by $H_i$ and $H_{i+1}$. (These notations are compatible with the ones introduced in the previous paragraph in the particular case $k=n$.)
\\

Applying Mostow's rigidity in dimension $d-1$ (and more precisely to the hypersurfaces $\overline H_1$ and $\overline H_2$), Gromov and Thurston show that in dimension $d\geq 4$ the fundamental group of $M^a$ is never isomorphic to a hyperbolic lattice when $a\neq 1$. Gromov--Thurston manifolds in dimension $d\geq 4$ are thus never homeomorphic to quotients of the hyperbolic space. In fact, their fundamental group is not commensurable to a lattice in any Lie group (see Remark \ref{rem-GTnotLattice}).


\section{Globally hyperbolic AdS manifolds and hyperbolic ends} 
\label{sc:globally}

We recall in this section some key notions concerning AdS geometry and more specifically the geometry of globally hyperbolic AdS spacetimes. Additional results can be found e.g. in \cite{mess,mess-notes,barbot-merigot,maximal}. We also present hyperbolic ends. 


\subsection{The anti-de Sitter space} 
Here we use the hyperboloid model of the anti-de Sitter space. Let $\R^{d,2}$ denote the real vector space $\R^{d+2}$ endowed with the standard quadratic form $\mathbf q$ of signature~$(d,2)$:
\[\mathbf q( x) = x_1^2 + \ldots + x_d^2 -x_{d+1}^2 - x_{d+2}^2~.\]
We denote by $\langle \cdot, \cdot \rangle$ the associated bilinear form.

\begin{defi}
The anti-de Sitter space of dimension $d+1$ is the quadric:
\[\AdS^{d+1} = \{x \in \R^{d,2} \mid \mathbf q ( x) = -1\}~.\]
\end{defi}

The restriction of $\mathbf q$ to the tangent bundle of $\AdS^{d+1}$ endows the anti-de Sitter space with a Lorentzian metric of constant sectional curvature $-1$, which we denote by $g_{\AdS}$. This metric is homogeneous under the action of the group $\mathrm O(d,2)$ of linear transformations of $\R^{d,2}$ preserving~$\mathbf q$. We denote by $\SO_\circ(d,2)$ the connected component of the identity in $\mathrm O(d,2)$. This is an index 4 subgroup consisting of those isometries of
$\AdS^{d+1}$ preserving an orientation of space and time. We call it for short the group of orientation-preserving isometries.

The subgroup of $\SO_\circ(d,2)$ fixing the point $(0,\ldots, 0, 1)\in \AdS^{d+1}$ is the group $\SO_\circ(d,1)$ embedded via
\[A\mapsto \left( \begin{matrix} A & \\ & 1\end{matrix} \right)~.\]
The (space and time-oriented) anti-de Sitter space of dimension $d+1$ can thus be identified with the coset space
\[\SO_\circ(d,2)/\SO_\circ(d,1)~.\\ \]

\emph{Boundary.} 
The space $\AdS^{d+1}$  can also be identified with an open set of the $d+1$-dimensional sphere, seen as the double cover of $\Proj{\R^{d,2}}$, via the map
\[\begin{array}{ccc}
\AdS^{d+1} & \to & \bS^{d+1} = (\R^{d,2}\setminus \{0\}) / \R_{>0} \Id\\
x & \mapsto & \R_{>0} x~.
\end{array}\]
Its boundary in $ \bS^{d+1}$ is called the \emph{Einstein space}.

\begin{defi}
The Einstein space $\Ein^d$ is defined as
\[ \Ein^d=\partial_\infty \AdS^{d+1}=\{x\in \R^{d,2}\backslash \{0\} \mid \mathbf q (x) = 0\}/ \R_{>0} \Id~.\]
\end{defi}
The Einstein space carries a conformally flat Lorentz metric which is conformally invariant under the action of $\mathrm \SO_\circ(d,2)$. \\

\emph{Geodesics and causality in $\AdS^{d+1}$.} The geodesics of $\AdS^{d+1}$ are its intersections with $2$-planes~$P$ in $\R^{d,2}$. These are of three kinds:
\begin{itemize}
\item If $\mathbf q_{\vert P}$ is negative definite, then $P\cap \AdS^{d+1}$ is an ellipse. It is a \emph{timelike geodesic}, i.e. the Lorentz metric is negative along that geodesic.
\item If $\mathbf q_{\vert P}$ is non-positive with $1$-dimensional kernel, then $P\cap \AdS^{d+1}$ consists of two parallel affine lines, each of which is a \emph{lightlike geodesic}, i.e. the Lorentz metric vanishes along that geodesic.
\item If $\mathbf q_{\vert P}$ has signature $(1,1)$, then $P\cap \AdS^{d+1}$ consists of two branches of hyperbolas, each of which is a \emph{spacelike geodesic}, i.e. the Lorentz metric is positive along that geodesic.
\end{itemize}

We call two points $x$ and $y$ in $\AdS^{d+1}$ \emph{space (resp. light, time) related} if they belong to the same spacelike (resp. lightlike, timelike) geodesic. We have the following characterization:

\begin{prop} \label{p:CharacterizationSpacelikeAdS}
Two points $x$ and $y\in \AdS^{d+1}$ are 
\begin{itemize}
\item space related if and only if $\langle x, y\rangle < -1$,
\item light related if and only if $\langle x, y\rangle = -1$,
\item time related if and only if $-1 < \langle x, y\rangle < 1$.
\end{itemize}
\end{prop}

\begin{remark}
If $\langle x, y\rangle \geq 1$ then $x$ and $y$ do not belong to a common geodesic, but $x$ and $-y$ are light or space related. In the projective model $\AdS^{d+1}/\pm \Id$, any two points are either space, light, or time related.
\end{remark}

\emph{Photons and causality in $\Ein^d$.} A \emph{photon} in $\Ein^d$ is the projectivisation of a totally isotropic $2$-plane in $\R^{d,2}$. We call two points $[x]$ and $[y] \in \Ein^d$ \emph{light related} if they belong to the same photon and \emph{space related} if they are the endpoints of a spacelike geodesic in $\AdS^{d+1}$. 
 We have again a characterization in terms of scalar products:

\begin{prop}
Two points $[x]$ and $[y]\in \Ein^d$ are 
\begin{itemize}
\item space related if and only if $\langle x, y\rangle < 0$,
\item light related if and only if $\langle x, y\rangle = 0$.
\end{itemize}
\end{prop}

Note that $[x]$ is always space or light related to either $[y]$ or $[-y]$. Causality thus does not make sense in the projective model $\mathbb E\mathrm{in}^d = \Ein^d/\pm \Id$. A more robust notion is space relation for triples of points.

\begin{defi}
We call a subset $S$ of $\Ein^d$ \emph{acausal} if any two points in $S$ are space related, and \emph{achronal} if any two points in $S$ are space or light related.
\end{defi}

\begin{prop}
If $\{[x], [y], [z]\}\subset \Ein^d$ is acausal, then the restriction of $\mathbf q$ to $\Span(x,y,z)$ has signature $(2,1)$.

Conversely, if the restriction of $\mathbf q$ to $\Span(x,y,z)$ has signature $(2,1)$, then there exist unique $\epsilon_y$ and $\epsilon_z\in \{-1,1\}$ such that $\{[x], [\epsilon_y y], [\epsilon_z z]\}$ is acausal.
\end{prop}

\subsection{Spacelike hypersurfaces in $\AdS^{d+1}$}  
\label{ssc:spacelike hypersurfaces}

A smooth hypersurface $\mathcal H$ in $\AdS^{d+1}$ is called spacelike when the restriction of the Lorentz metric to $\mathcal H$ is positive definite. Here, we will construct hypersurfaces that are piecewise geodesic, and it is thus useful to generalize this definition to a lower regularity.\\

Let $\mathcal H$ be a Lipschitz manifold of dimension $d$. Let $d_{\mathcal H}$ be a Lipschitz distance on $\mathcal H$ (i.e. a distance which is locally bi-Lipschitz to the Euclidean distance in local coordinates).

\begin{defi}
A map $i : \mathcal H \to \AdS^{d+1}$ is a \emph{spacelike immersion} if it is locally Lipschitz and if  every point in $\mathcal H$ has a neighbourhood $U$ such that there exists a constant $c>0$ satisfying
\[\langle i(p),i(p')\rangle \leq -1 - c\, d_{\mathcal H}^2(p,p')\]
for all $p,p'\in U$.

A Lipschitz hypersurface $\mathcal H$ of $\AdS^{d+1}$ is called \emph{spacelike} if the inclusion $i:\mathcal H \to \AdS^{d+1}$ is a spacelike immersion.
\end{defi}

By Proposition \ref{p:CharacterizationSpacelikeAdS} the above condition implies that $i(p)$ is space related to $i(p')$ when $p,p'\in \mathcal H$ are sufficiently close. The constant $c$ prevents the hypersurface to be  ``tangent'' to the light cone through $p$. In particular, we have:

\begin{prop} \label{prop-smoothspacelikeimmersion}
If $\mathcal H$ and $i$ are of class $\mathcal C^1$, then $i$ is a spacelike immersion if and only if $i^*g_{\AdS}$ is positive definite at every point. 
\end{prop}

Before we prove Proposition \ref{prop-smoothspacelikeimmersion}, let us interpret spacelike immersions in terms of graphs in an appropriate model for $\AdS^{d+1}$. Consider the open hemisphere $\bS^d_+=\{(x_0,\dots,x_d)\in \bS^d \,|\, x_0>0 \}$. The map
\[ \Phi : \left\lbrace\begin{matrix} \bS^1\times \bS^d_+ & \to & \AdS^{d+1} \\ (\theta,x) & \mapsto & \left( \frac{x_1}{x_0},\dots,\frac{x_d}{x_0},\frac{\cos \theta}{x_0}, \frac{\sin\theta}{x_0}\right) \end{matrix} \right. \]
is a diffeomorphism, and $\Phi^*g_{\AdS}=x_0^{-2}(-d\theta^2+g_{\bS^d})$. An important feature of this model is that $\bS^d_+$ equipped with $x_0^{-2}g_{\bS^d}$ is isometric to the hyperbolic space $\HH^d$, in particular the hypersurfaces $\theta={\rm cst}$ are totally geodesic copies of $\HH^d$. Another one is that it extends to the boundary as a conformal map $\partial\Phi:\bS^1\times \bS^{d-1}\to \partial\AdS^{d+1}=\Ein^d$.

\begin{lemma} \label{lem-spacelike implies graph locally}
Let $\mathcal H$ be a Lipschitz manifold, and $i:\mathcal H\to \AdS^{d+1}$ a spacelike immersion. Write $i(p)=\Phi(\theta(p),x(p))$ for $p\in \mathcal H$. Then the map $x:\mathcal H\to \bS^d_+$ is locally bi-Lipschitz.
\end{lemma}

\begin{proof}
For $(\theta,x),(\theta',x')\in \bS^1\times \bS^d_+$ we find
\begin{align*} \langle\Phi(\theta,x),\Phi(\theta',x')\rangle &= \frac{x_1x'_1+\cdots+x_dx'_d-\cos(\theta-\theta')}{x_0x'_0} \\
&= \frac{1-\frac{1}{2}\Vert x-x'\Vert^2-\cos(\theta-\theta')}{x_0x'_0}-1\\
&\geq -\frac{\Vert x-x'\Vert^2}{2x_0x'_0}-1
\end{align*}
This shows that  for $p,p'\in \mathcal H$, we have
\[ \Vert x(p)-x(p')\Vert^2 \geq -x_0(p)x_0(p') (1+\langle i(p),i(p')\rangle \]
Since $p\mapsto x_0(p)$ is continuous, it is locally bounded from below by some $c'>0$, and locally we find
\[ \Vert x(p)-x(p')\Vert \geq \frac{\sqrt{c}}{c'}d_{\mathcal H}(p,p') \]
\end{proof}

Lemma \ref{lem-spacelike implies graph locally} means that a Lipschitz spacelike hypersurface is locally a graph in the conformal model $\AdS^{d+1}\approx \bS^1\times \bS^d_+$. Now given a function from $\bS^d_+$ to $\bS^1$, we wish to know under which condition its graph is a Lipschitz spacelike hypersurface.

\begin{lemma} \label{lm:contracting graph is spacelike}
Let $U\subset \bS^d_+$ be an open subset, and consider a map $\theta:U\to \bS^1$. The map $i:U\to \AdS^{d+1}$ defined by $i(x)=\Phi(\theta(x),x)$ is a spacelike immersion if and only if $\theta$ is locally contracting (i.e. every point in $U$ has a neighbourhood on which $\theta$ is $k$-Lipschitz for some $k\in (0,1)$).
\end{lemma}

\begin{remark} 
As a distance on $\bS^d_+$ we can pick either the spherical distance or the Euclidean distance. The condition on $\theta$ being locally contracting does not depend on this choice, since for every $\varepsilon>0$ we can find a neighbourhood of any point on which they are $1+\varepsilon$-bi-Lipschitz. We will use the Euclidean distance in the proof. 
\end{remark}

\begin{proof}[Proof of Lemma \ref{lm:contracting graph is spacelike}]
For $x,x'\in U$ we have
\[ \langle i(x),i(x')\rangle =  \frac{1-\frac{1}{2}\Vert x-x'\Vert^2-\cos(\theta(x)-\theta(x'))}{x_0x'_0}-1 .\]
If $\theta$ is $k$-Lipschitz for some $k\in (0,1)$, then up to shrinking $U$ we obtain
\[ 1-\cos(\theta(x)-\theta(x'))\leq \frac{k}{2}\Vert x-x'\Vert^2 \]
Since $x_0,x'_0\leq 1$, we find
\[ \langle i(x),i(x')\rangle \leq -1- \frac{1-k}{2}\Vert x-x'\Vert^2~,\]
hence $i$ is a spacelike immersion.

Now assume that $i$ is a spacelike immersion, and let $c\in (0,\frac{1}{2})$ be a constant given by the definition (note that $c$ can always be replaced by a smaller constant). The map $\theta$ is continuous, so  we can shrink $U$ in order to have 
\[ 1-\cos(\theta(x)-\theta(x'))\geq \frac{1-c}{2}\vert \theta(x)-\theta(x')\vert^2 \]
 for all $x,x'\in U$. This in turn leads to
\[ \vert \theta(x)-\theta(x')\vert^2 \leq \underbrace{\frac{\frac{1}{2}-c}{\frac{1}{2}-\frac{c}{2}}}_{<1} \Vert x-x'\Vert^2~.
 \]
\end{proof}

\begin{proof}[Proof of Proposition \ref{prop-smoothspacelikeimmersion}]
If $i$ is $\mathcal C^1$, then the map $x:\mathcal H\to \bS^d_+$ from Lemma \ref{lem-spacelike implies graph locally} is bi-Lipschitz and $\mathcal C^1$, hence a local diffeomorphism. So we may assume that $\mathcal H$ is an open subset of $\bS^d_+$ and that $i(x)=\Phi(\theta(x),x)$ where $\theta:\mathcal H\to \bS^1$ is $\mathcal C^1$. Now Lemma \ref{lm:contracting graph is spacelike} shows that $i$ is a spacelike immersion if and only if $\theta$ is locally contracting, which is equivalent to $\Vert d\theta\Vert<1$.\\
But $i^*g_{\AdS}$ is in the same conformal class as $-d\theta^2+g_{S^d}$, so it is positive definite  if and only if $\Vert d\theta\Vert <1$.
\end{proof}

More generally, if $i:\mathcal H \to \AdS^{d+1}$ is a spacelike Lipschitz immersion and $\gamma:[0,1] \to \mathcal H$ is a Lipschitz path, then $i\circ \gamma$ is Lipschitz hence differentiable at Lebesgue almost every point. The derivative of $i\circ \gamma$ is never timelike, so one can then define the \emph{length} of $i(\gamma)$ as
\[L_\AdS(i(\gamma)) = \int_0^1 \sqrt{g_{\AdS}((i\circ \gamma)'(t), (i\circ \gamma)'(t))} \mathrm d t~.\]

Finally, for $x,y\in \mathcal H$, set
\[i^*d_\AdS(x,y) = \inf_{\gamma(0)=x, \gamma(1)=y} L_\AdS(i\circ \gamma)~.\]
The spacelike immersion property of $i$ implies that $i^*d_\AdS$ is a Lipschitz distance on $\mathcal H$ (the usual proof for smooth Riemannian metrics can be easily adapted thanks to the graph description of Lemma \ref{lem-spacelike implies graph locally}). When $\mathcal H$ and $i$ are $\mathcal C^1$, it is simply the Riemannian distance associated to $i^*g_\AdS$.

\begin{defi}
The spacelike immersion $i$ is called \emph{complete} when the distance $i^*d_\AdS$ is complete.\\
A spacelike hypersurface $\mathcal H\subset \AdS^{d+1}$ is called \emph{complete} if the inclusion $i:\mathcal H\to \AdS^{d+1}$ is a complete spacelike immersion.
\end{defi}

The following proposition brings together a number of key properties of spacelike hypersurfaces in $\AdS^{d+1}$.

\begin{prop} \label{prop: properties of complete spacelike hypersurfaces}
Let $\mathcal H$ be a connected Lipschitz manifold of dimension $d$ and $i: \mathcal H \to \AdS^{d+1}$ a complete spacelike immersion. Then 
\begin{itemize}
\item[(1)] $\mathcal H$ is homeomorphic to the open ball $\mathrm B^d$ of dimension $d$,
\item[(2)] $i$ is an embedding,
\item[(3)] $i(x)$ and $i(y)$ are space related for any $x\neq y \in \mathcal H$,
\item[(4)] $i$ extends continuously to an achronal embedding
\[\partial i:  \bS^{d-1}= \partial \mathrm B^d \to \Ein^d~.\\ \]
\end{itemize}
\end{prop}

\begin{proof}
Write $i(p)=\Phi(\theta(p),x(p))$ for $p\in \mathcal H$. We have seen in Lemma \ref{lem-spacelike implies graph locally} that $x:\mathcal H\to \bS^d_+$ is locally bi-Lipschitz. We can use it to define another distance on $\mathcal H$ by defining the length of a Lipschitz curve $\gamma:[0,1]\to \mathcal H$ as the hyperbolic length $L_{\HH^d}(x\circ \gamma)$, and the distance $x^*d_{\HH^d}$ as the infimumum of lengths of paths joining two points.\\
Note that $L_\AdS(i\circ\gamma)\leq L_{\HH^d}(x\circ \gamma)$, so $i^*d_\AdS\leq x^*d_{\HH^d}$. Completeness of $i$ and the Hopf--Rinow Theorem for length spaces (see \cite[Theorem 1.9]{gromov_book} or \cite[Proposition I.3.7]{bridson_haefliger}) imply that closed balls for $i^*d_\AdS$ are compact, so  $x^*d_{\HH^d}$ is also complete. This shows that $x$ is a local isometry between length spaces, the source being complete, so it is a covering map (see \cite[Proposition I.3.28]{bridson_haefliger} for a proof in the context of length spaces). This implies that the hyperbolic metric on $x(\mathcal H)$ is complete, so $x$ is onto. Since it is a covering, it must be a homeomorphism. This proves $(1)$ and $(2)$.\\ 
Now consider a lift $\widetilde i:\mathcal H\to \widetilde{\AdS}^{d+1}$ to the universal cover. Lemma \ref{lem-spacelike implies graph locally} shows that $\widetilde i(\widetilde{\mathcal H})$ is weakly spacelike as defined in \cite[Section 3.2]{maximal}, so property $(3)$  follow from \cite[Proposition 3.5]{maximal}. This in turn implies that $\mathcal H$ is (globally) the graph of a $1$-Lipschitz function $\theta:\bS^d_+\to \R$ (for the spherical distance), so $(4)$ follows from the extendability of Lipschitz functions.

\end{proof}

As a consequence of this proof we get the following description of complete spacelike hypersurfaces in $\AdS^{d+1}$.

\begin{cor} \label{cor:complete spacelike hypersurfaces are graphs}
Let $\mathcal H\subset \AdS^{d+1}$ be a complete spacelike hypersurface. There is a  distance decreasing function $\theta:\bS^d_+\to \bS^1$ (i.e. $\vert \theta(x)-\theta(x')\vert < d_{\bS^d}(x,x')$ whenever $x\neq x'$) such that $\mathcal H=\{\Phi(\theta(x),x)|x\in S^d_+\}$.
\end{cor}

\begin{remark}
Here it is important to use the spherical distance on $\bS^d_+$ rather than the Euclidean distance.
\end{remark}

\subsection{Second fundamental form} Here we recall the classical notion of second fundamental form in a setting which applies both to spacelike hypersurfaces in $\AdS^{d+1}$ and to $\AdS^{d+1}$ itself inside the flat pseudo-Riemannian space $\R^{d,2}$.\\

Let $(M,g)$ be a smooth oriented pseudo-Riemannian manifold of signature $(p,q)$, and let $\mathcal H \subset M$ be an oriented hypersurface of class $\mathcal C^2$ such that the restriction of $g$ to $\mathcal H$ has signature $(p,q-1)$. Let us denote by $TM_{|\mathcal H}$ the pull-back of the tangent bundle $TM$ by the inclusion $\mathcal H\hookrightarrow M$. The tangent bundle $T\mathcal H$ is a sub-bundle of $TM_{|\mathcal H}$ and $TM_{|\mathcal H}$ splits orthogonally (with respect to $g$) as
\[TM_{|\mathcal H} = T\mathcal H \oplus \R N~,\]
where $N$ is the unit normal to $\mathcal H$ (i.e. $g(N,N) \equiv -1$ and the orientation of $N$ is compatible with those of $M$ and $\mathcal H$).

The Levi--Civita connection $\nabla^M$ of $M$ restricts to a connection on $TM_{|\mathcal H}$ (that we still denote $\nabla^M$). Since $g(N,N)$ is constant, $\nabla^M_X N$ is orthogonal to $N$ and thus tangent to $\mathcal H$ for every vector $X$ tangent to $\mathcal H$.

\begin{defi}
The \emph{second fundamental form} of $\mathcal H$ is the bilinear form on $T\mathcal H$ defined by
\[\secondFF_{\mathcal H}(X,Y) = g(\nabla^M_X N,Y)~.\]
\end{defi}

The second fundamental form relates the Levi--Civita connection $\nabla^{\mathcal H}$ of $(\mathcal H,g_{|\mathcal H})$ to the ambient connection $\nabla^M$:

\begin{prop}
For every vector fields $X$ and $Y$ on $\mathcal H$, we have
\[\nabla^M_X Y = \nabla^{\mathcal H}_X Y + \secondFF_{\mathcal H}(X,Y) N~.\]
\end{prop}

\begin{remark}
The reader familiar with Riemannian geometry will notice a sign difference in the definition of the second fundamental form. This is due to the fact that we assume here that $g$ is negative in the normal direction.\\
\end{remark}

\subsection{Convexity in $\AdS^{d+1}$}

Let $V$ be a hyperplane in $\R^{d,2}$ in restriction to which the quadratic form $\mathbf q$ has signature $(d,1)$. Then $V\cap \AdS^{d+1}$ is a two-sheeted hyperboloid of dimension $d$, each connected component of which is a totally geodesic spacelike hypersurface. We call such a connected component a \emph{spacelike hyperplane}. They are the totally geodesic copies of $\HH^d$ in $\AdS^{d+1}$. For any $\theta_0\in \bS^1$ the set $\Phi(\{\theta_0\}\times \bS^d_+)$, where $\Phi:\bS^1\times \bS^d_+\to \AdS^{d+1}$ is the diffeomorphism defined above, is a spacelike hyperplane.

If $W \subset V\cap \AdS^{d+1}$ is a spacelike hyperplane, we denote by $\overline{W}$ the other component of $V\cap \AdS^{d+1}$, i.e. the image of $W$ by $x\mapsto -x$. We say that a point $x\in \AdS^{d+1} \backslash \overline{W}$ is \emph{in the past} of $W$ if there exists a future-oriented timelike geodesic segment from $x$ to a point of $W$ which does not intersect $\overline{W}$.

\begin{defi}
Let $\mathcal H\subset\AdS^{d+1}$ be a Lipschitz spacelike hypersurface. We say that $\mathcal H$ is \emph{convex} if for every $x\in \mathcal H$, there exists a spacelike hyperplane $W$ containing $x$ such that $\mathcal H$ is contained in the past of $W$.

The hypersurface $\mathcal H$ is \emph{locally convex} if every point $x\in \mathcal H$ has an open neighbourhood $U$ such that $\mathcal H \cap U$ is convex.
\end{defi}

For complete hypersurfaces, the local convexity property globalizes.

\begin{prop} \label{p: Local convexity -> convexity}
Let $\mathcal H$ be a complete Lipschitz spacelike hypersurface. Then $\mathcal H$ is convex if and only if it is locally convex.
\end{prop}

\begin{proof}
Let $p\in \mathcal H$ and consider a spacelike hyperplane $W$ containing $p$ such that a neighbourhood of $p$ in  $\mathcal H$ is in the past of $W$. 

Up to the action of $\SO_\circ(d,2)$, we may assume that $W=\Phi(\{0\}\times S^d_+)$, and write $p=\Phi(0,x)$. We have seen in the proof of Proposition \ref{prop: properties of complete spacelike hypersurfaces} that $\mathcal H$ is the graph of a function $\theta:\bS^d_+\to \bS^1$  in the conformal model $\AdS^{d+1}\approx \bS^1\times \bS^d_+$. Since $\theta$ is distance decreasing, it is not onto and we may consider $\theta$ as a real valued map.

Assume by contradiction that $\mathcal H$ is not in the past of $W$, i.e. $\theta>0$ at some point $y\in \bS^d_+$.  Along the spherical geodesic $\gamma:[0,1]\to \bS^d_+$ joining $x$ and $y$, the function $\theta$ possesses a local minimum. If $z=\gamma(t_0)$ is such a point of $\bS^d_+$, consider a spacelike hyperplane $W'$ containing $\Phi(\theta(z),z)$ such that a neighbourhood of $\Phi(\theta(z),z)$ in $\mathcal H$ is in the past of $W'$. Note that $W'$ is the graph of a function $\alpha:\bS^d_+\to S^1$.  For $t$ near $t_0$, we have $\alpha\circ\gamma(t)\geq \theta\circ \gamma(t)\geq \theta(z)$, with equality at $t=t_0$. Hence $(\alpha\circ\gamma)'(t_0)=0$, and the geodesic $t\mapsto \Phi(\alpha\circ\gamma(t),\gamma(t))$ is tangent to $\Phi(\{\theta(z)\},\bS^d_+)$ at $\Phi(\theta(z),z)$, therefore $\alpha\circ\gamma$ is constant. It implies that $\theta\circ\gamma$ is constant on a neighbourhood of every point where a local minimum is obtained, hence $\theta\circ\gamma\geq 0$ which is a contradiction. 
\end{proof}

For hypersurfaces of class $\mathcal C^2$, convexity is characterized by the sign of the second fundamental form:

\begin{prop}
Let $\mathcal H$ be a spacelike hypersurface of class $\mathcal C^2$. Then $\mathcal H$ is locally convex if and only if its second fundamental form is non-negative.
\end{prop}

This motivates the following strengthenings:

\begin{defi} \label{df:unif-convex}
A complete spacelike hypersurface $\mathcal H$ of class $\mathcal C^2$ is called \emph{strongly convex} if its second fundamental form $\secondFF_{\mathcal H}$ is positive definite, and \emph{uniformly strongly convex} if there exists a constant $c>0$ such that
\[ \secondFF_{\mathcal H} \geq c\, g_{|\mathcal H}~.\]
\end{defi}
 
\subsection{Globally hyperbolic spacetimes} \label{subsec: globally hyperbolic spacetimes} 

Let $N$ be a Lorentzian manifold. A $\mathcal C^1$ curve on $N$ is called \emph{causal} if its tangent direction is nowhere spacelike. Such a curve is called \emph{inextensible} if it is maximal among causal curves (for the inclusion).

\begin{defi}
A \emph{Cauchy hypersurface} in $N$ is a topological hypersurface intersecting every inextensible causal curve at exactly one point. A Lorentzian manifold admitting a Cauchy hypersurface is called \emph{globally hyperbolic}.
\end{defi}

A globally hyperbolic Lorentzian manifold $N$ always admits a \emph{temporal function}, i.e. a function  to $\R$ with no critical points whose level sets are spacelike hypersurfaces, see e.g. \cite{bernal-sanchez}. Moreover, all smooth Cauchy hypersurfaces are diffeomorphic, and if $M$ is a smooth Cauchy hypersurface, then $N$ is diffeomorphic to $M\times \R$.

\begin{defi}
A globally hyperbolic Lorentzian manifold is \emph{Cauchy compact} if its Cauchy hypersurfaces are compact.
\end{defi}

\begin{defi}
  A globally hyperbolic Cauchy compact 
  $\AdS$ manifold is \emph{maximal} if it is not isometric to a proper subset of another globally hyperbolic $\AdS$ manifold.
\end{defi}

From now on, we abreviate ``globally hyperbolic Cauchy compact'' into ``GHC'', and ``globally hyperbolic maximal Cauchy compact'' into ``GHMC''.

We recall here a description of GHMC AdS spacetimes, due to Mess \cite{mess,mess-notes}. (It is only stated in dimension $2+1$ in \cite{mess}, but the argument works in higher dimension as pointed in \cite{barbot_causal}. For other proofs see \cite[Corollary 11.2]{barbot_causal} and \cite[Proposition 4.8]{barbot-merigot}.)

\begin{defi} \label{def:domainofdependence}
Let $\Lambda$ be a closed subset of $\Ein^d$. The \emph{domain of dependence} of $\lambda$ is the open set
\[\Omega(\Lambda)= \{x\in \AdS^{d+1} \mid \langle x, y\rangle <0 \textrm{ for all } [y] \in  \Lambda\}~.\]
\end{defi}

The following theorem describes GHMC AdS manifolds as quotients of the domain of dependence of the boundary at infinity of a Cauchy hypersurface. Il was proved by Mess in dimension 2+1 and extended by Barbot in higher dimensions.

\begin{theorem}[Mess] \label{theo-mess}
$\Gamma$ be a subgroup of $\SO_\circ (d,2)$ acting freely, properly discontinuously and cocompactly on a spacelike hypersurface $\mathcal H$ and let $\partial_\infty \mathcal H$ be the boundary of $\mathcal H$ in $\Ein^d$. Then $\Gamma$ acts properly discontinuously on $\Omega(\partial_\infty \mathcal H)$, the quotient $N= \Gamma \backslash \Omega(\partial_\infty \mathcal H)$ is a GHMC AdS manifold and $\Gamma \backslash \mathcal H \subset N$ is a Cauchy hypersurface.

Conversely, let $N$ be a GHMC AdS manifold of dimension $d+1$. Then there exists a discrete subgroup $\Gamma$ of $\SO_\circ(d,2)$ and a $\Gamma$-invariant complete spacelike hypersurface $\mathcal H \subset \AdS^{d+1}$ such that $\Gamma$ acts properly discontinuously and cocompactly on $\mathcal H$ and $N$ is isometric to $\Gamma \backslash \Omega(\partial_\infty \mathcal H)$.
\end{theorem}


The following statement combines results from Barbot--Mérigot \cite{barbot-merigot}, Barbot \cite{barbot_deformations} and Danciger--Guéritaud--Kassel \cite{DGK:convex}.

\begin{theorem}[Barbot--Mérigot, Barbot,  Danciger--Gu\'eritaud--Kassel]  \label{thm-characterization AdS convex}
Let $N= \Gamma \backslash \Omega$ be a GHMC $\AdS$ manifold. The following properties are equivalent:
\begin{itemize}
\item[$(i)$] The group $\Gamma$ is Gromov hyperbolic,
\item[$(ii)$] The limit set $\Lambda_\Gamma$  is acausal,
\item[$(iii)$] The manifold $N$ contains a convex Cauchy hypersurface,
\item[$(iv)$] The manifold $N$ contains a strongly convex Cauchy hypersurface,
\item[$(v)$] The group $\Gamma$ acts convex-cocompactly on $\Omega$.
\end{itemize}
\end{theorem}

We call such a GHMC $\AdS$ manifold \emph{quasifuchsian}. It is called \emph{Fuchsian} if it possesses a totally geodesic Cauchy hypersurface.

\begin{proof}[Sketch of the proof]
$(v)\Rightarrow(iv)$ is Lemma 6.4 in \cite{DGK:convex}.\\
$(iv)\Rightarrow(iii)$ is straightforward.\\
$(iii)\Rightarrow(i)$ follows from Proposition 8.3 in \cite{barbot-merigot}.\\
$(i)\Rightarrow(ii)$ is Theorem 1.4 in \cite{barbot_deformations}.\\
Finally, the main result of \cite{barbot-merigot} is that $(ii)$ is equivalent to $\Gamma$ being $P_1$-Anosov, and the latter is equivalent to $(v)$ according to Theorem 1.7 in \cite{DGK:convex}.
\end{proof}

\subsection{Spacelike $\AdS$ structures}
\label{ssc:spacelike structures}

In this section we introduce a notion \emph{spacelike $\AdS$ structure} on  a manifold $M$, in a way that emulates the notion of $(G,X)$-structure. The ``developing map'' of such a structure is an equivariant spacelike immersion of the universal cover of $M$, from which one obtains a GHMC $\AdS$ manifold homeomorphic to $M\times \R$. This will allow us to reduce the construction of GHMC manifolds with prescribed Cauchy hypersurfaces to the construction of a spacelike AdS structure on a manifold one dimension lower.

Let $M$ be a Lipschitz manifold of dimension $d$.

\begin{defi}
A \emph{spacelike $\AdS$ atlas} on $M$ is the data of an atlas $(U_i, \phi_i)_{i\in I}$ where $(U_i)_{i\in I}$ is an open cover of $M$ and $\phi_i: U_i \to \AdS^{d+1}$ is a Lipschitz spacelike immersion, such that for all $i,j \in I$ and all $x\in U_i\cap U_j$, there exists an orientation preserving isometry $g$ of $\AdS^{d+1}$ such that
\[\phi_j = g\circ \phi_i\]
in a neighbourhood of $x$.

A \emph{spacelike $\AdS$ structure} on $M$ is a maximal spacelike $\AdS$ atlas.

Two spacelike $\AdS$ atlases $(U_i, \phi_i)_{i\in I}$ and $(V_j, \psi_j)_{j\in J}$ on $M$ are \emph{equivalent} if for every $(i,j)\in I\times J$ and every $x\in U_i\cap V_j$, there exists an orientation preserving isometry $g$ of $\AdS^{d+1}$ such that
\[\psi_j = g\circ \phi_i\]
in a neighbourhood of $x$.
\end{defi}

Note that this definition is almost identical to that of a $(G,X)$-structure, except that the charts are required to be spacelike immersions instead of local homeomorphisms.\\

Emulating the theory of $(G,X)$-structures, we want to ``patch together'' the local charts into an equivariant spacelike immersion of the universal cover. In order to do so, remark first that there is a unique way to patch together two local charts:

\begin{lemma} \label{lm:stabiliserspacelikeimmersion}
Let $U$ be a non empty open subset of $M$, $\phi:U\to \AdS^{d+1}$ a spacelike immersion, and $g$ an orientation preserving isometry of $\AdS^{d+1}$ such that $g\circ \phi = \phi$. Then $g = \Id$.
\end{lemma}

\begin{proof}
Consider a point $x\in \phi(U)$ at which $\phi(U)$ is differentiable (it exists because of Rademacher's Theorem). The tangent space $T_{x}\phi(U)$ is spacelike because of Lemma \ref{lm:contracting graph is spacelike}. Now $g$ fixes every point of the spacelike hyperplane tangent to $\phi(U)$ at $x$, hence $g=\Id$.
\end{proof}

As a consequence, we can still define the holonomy representation and the developing map of a spacelike $\AdS$ structure.

\begin{cor} \label{cor-developing map spacelike AdS structure}
Let $(U_i, \phi_i)_{i\in I}$ be a spacelike $\AdS$ atlas on $M$. Then there exists a representation $\rho:\pi_1(M) \to \SO_\circ(d,2)$ and a $\rho$-equivariant spacelike immersion $\phi: \tilde M \to \AdS^{d+1}$ such that, for all $x\in \tilde M$ and all $U_i$ containing $\pi(x)$, there exists  $g\in \SO_\circ(d,2)$ such that
\[\phi_i \circ \pi = g \circ \phi\]
in a neighbourhood of $x$. (Here, $\pi:\tilde M \to M$ denotes the covering map.)

Moreover, if another pair $(\rho', \phi')$ satisfies the same properties, then there is a unique $g\in \SO_\circ(d,2)$ such that $ \rho' = g\rho g^{-1}$ and $\phi' = g\circ \phi$.
\end{cor}

\begin{proof}
The proof is the same as for $(G,X)$-structures. Let us write $G=\SO_\circ(d,2)$ and $H=\SO_\circ(d,1)$, so that $\AdS^{d+1}=G/H$.\\
Given $i,j\in I$ and $x\in U_i\cap U_j$, write $g_{ij}(x)\in G$ the element such that $\phi_i=g_{ij}(x)\cap \phi_j$ on a neighbourhood of $x$ (it is unique thanks to Lemma \ref{lm:stabiliserspacelikeimmersion}). Because of its uniqueness, it satisfies the cocycle rule $g_{ik}(x)=g_{ij}(x)g_{jk}(x)$ whenever $x\in U_i\cap U_j\cap U_k$, and we can consider the $G$-principal bundle $P$ over $M$ with transitions $g_{ij}$.\\
Another consequence of Lemma \ref{lm:stabiliserspacelikeimmersion} is that the map $x\mapsto g_{ij}(x)$ is locally constant, so $P$ inherits a flat connection, and we can consider its holonomy representation $\rho:\pi_1(M)\to G$.\\
Now let $E$ be the associated $G/H$-bundle over $M$. The relation $\phi_i=g_{ij}\circ \phi_j$ shows that there is a section $\sigma$ of $E$ that locally reads as $\phi_i$. This section lifts to a $\rho$-equivariant map $\phi:\widetilde M\to G/H=\AdS^{d+1}$ with the required properties.\\
If another pair $(\rho', \phi')$ satisfies the same properties, then for any $x\in\widetilde M$ there is an element $g(x)\in G$ such that $\phi'=g(x)\circ \phi$ on a neighbourhood of $x$. Using Lemma \ref{lm:stabiliserspacelikeimmersion} once again, we see that the map $x\mapsto g(x)$ is locally constant, hence constant. The equivariance implies that $\rho'=g\rho g^{-1}$.
\end{proof}

Conversely, a representation $\rho:\pi_1(M)\to \SO_\circ(d,2)$ and an equivariant spacelike immersion $\phi:\tilde M\to \AdS^{d+1}$  define a spacelike AdS structure on $M$ by choosing an open cover $(U_i)_{i\in I}$ of $M$ so that $\pi:\tilde M\to M$ is invertible on each $U_i$,  with  $\phi_i:U_i\to \AdS^{d+1}$ defined as  $\phi\circ \pi^{-1}$ on $U_i$. 

Let now $\phi:\tilde{M}\to \AdS^{d+1}$ be a $\rho$-equivariant spacelike immersion. Note that the pulled-back distance $\phi^*d_\AdS$ (as defined in Section \ref{ssc:spacelike hypersurfaces}) is $\pi_1(M)$-invariant. If $M$ is moreover compact, then it is complete, and $\phi$ is thus an embedding (see Proposition \ref{prop: properties of complete spacelike hypersurfaces}). This implies that $\rho$ is discrete and faithful and $\rho(\pi_1(M))$ acts properly discontinuously on $\phi(\tilde M)$. By Theorem \ref{theo-mess}, $\rho$ is the thus the holonomy of a GHMC AdS spacetime $N$ and $\phi$ embeds $M$ as a Cauchy hypersurface in $N$. We thus obtain the following:

\begin{theorem} \label{pr:spacelike}
Let $M$ be a closed manifold of dimension $d$, let $\rho$ be a representation of $\pi_1(M)$ into $\SO_\circ(d,2)$ and let $\phi: \tilde{M} \to \AdS^{d+1}$ be a $\rho$-equivariant spacelike embedding. Then there exists a unique $\rho$-invariant open domain $\Omega_\rho \subset \AdS^{d+1}$ such that:
\begin{enumerate}
\item $\pi_1(M)$ acts properly discontinuously on $\Omega_\rho$ via $\rho$,
\item the quotient $N_\rho= \rho(\pi_1(M))\backslash \Omega_\rho$ is a GHMC $\AdS$ manifold,
\item the map $\phi$ factors to an embedding of $M$ into $N_\rho$ whose image is a Cauchy hypersurface. 
\end{enumerate}
\end{theorem} 

\subsection{Convex ruled spacelike AdS structures}
\label{ssc:convex}

\begin{defi}
  Let $N$ be a quasifuchsian AdS spacetime of dimension $d+1$. A Lipschitz hypersurface $M\subset N$ is called \emph{spacelike} if its lift $\tilde M$ to the universal cover of $N$ (which is an open subset of  $\AdS^{d+1}$ by Mess's Theorem) is a Lipschitz spacelike hypersurface.\\ We say that $M$ is \emph{past-convex} if $\tilde M$ is past-convex.\\ We say that $M$ is {\em ruled} if  each $x\in M$ lies in the relative interior of a geodesic segment  of $N$ which is contained in $M$.  
\end{defi}

\begin{lemma} \label{lm:unique}
  Let $N$ be a quasifuchsian AdS spacetime. Then $N$ contains a unique past-convex ruled Cauchy hypersurface. 
\end{lemma}

\begin{proof}
For the existence, write $N=\Gamma\backslash \Omega$, and let $\Lambda=\partial\Omega\cap\partial_\infty \AdS^{d+1}$ be its limit set. We denote by $\Conv(\Lambda)\subset \AdS^{d+1}\cup\partial_\infty \AdS^{d+1}$ the convex hull of $\Lambda$, and $C(N)=\Gamma\backslash (\Conv(\Lambda)\setminus\Lambda)\subset N$ the convex core of $N$. Its boundary $\partial N$ has two connected components, and we consider the future  component $\partial_+C(N)$. It is a Cauchy hypersurface in $N$ \cite[Lemma 4.9]{barbot-merigot}, and moreover a spacelike hypersurface thanks to \cite[Lemma 3.16]{barbot-merigot} and Lemma \ref{lm:contracting graph is spacelike}. It is past-convex by definition. Let $x\in \partial_+C(N)$, and consider a lift $\tilde x\in \partial\Conv(\Lambda)$. This lift does not belong to $\Lambda$, so it cannot be an extreme point of the convex set $\Conv(\Lambda)$, it therefore lies  in the relative interior of a geodesic segment of $\AdS^{d+1}$ contained in $\Conv(\Lambda)$. This geodesic segment is spacelike because $\partial_+C(N)$ is a spacelike hypersurface. It must lie in $\partial \Conv(\Lambda)$ because the interior of $\Conv(\Lambda)$ is convex.\\
Now for the uniqueness, let $S\subset N$ be a ruled past convex Cauchy hypersurface, and $\tilde S\subset \Omega$ its lift. We then have $\partial \tilde S \subset \partial_\infty \AdS^{d+1}=\Lambda$ thanks to \cite[Corollary 3.8]{maximal}. Let $\mathcal C\subset \AdS^{d+1}\cup\partial_\infty \AdS^{d+1}$ be the closed convex hull of $\tilde S$. It is  compact and convex, so by the Krein--Milman Theorem it is the convex hull of its set of extreme points $E\subset \tilde S\cup \Lambda\subset \AdS^{d+1}\cup\partial\AdS^{d+1}$. Since $S$ is ruled, we have $\tilde S\cap E=\emptyset$, so $\tilde S\subset \mathcal C=\Conv(\Lambda)$. Since $S$ is past-convex, we find that $S=\partial_+C(N)$.
\end{proof}

\begin{defi}
  A spacelike $\AdS$ structure $(U_i, \phi_i)_{i\in I}$ on a manifold $M$ is {\em convex ruled} if for all $i\in I$, $\phi_i(U_i)$ is contained in a past-convex ruled hypersurface. 
\end{defi}

\begin{lemma} \label{lm:AdSquasifucsianConvexRuledSpacelikeStructure}
 Let $M$ be a closed manifold of dimension $d$. There is a one-to-one correspondence between convex ruled spacelike $\AdS$ structures on $M$ and quasifuchsian $\AdS$ spacetimes homeomorphic to $M\times \R$.
\end{lemma}

\begin{proof}
It follows from Lemma \ref{lm:unique}, Theorem \ref{pr:spacelike}, and characterization $(iii)$ in  Theorem \ref{thm-characterization AdS convex}.
\end{proof}

\subsection{Convex ruled hyperbolic embedding structures and hyperbolic ends} \label{subsec:hyperbolic ends}

In this section we outline the analog in hyperbolic manifolds of the notion of spacelike $\AdS$ structures developed in Sections \ref{ssc:spacelike structures} and \ref{ssc:convex}.

\begin{defi}
Let $\mathcal H$ be an oriented Lipschitz manifold. A map $\phi:\mathcal H\to \HH^{d+1}$ is called a \emph{Lipschitz embedding} if $(x,y)\mapsto d_{\HH^{d+1}}(\phi(x),\phi(y))$  is a Lipschitz distance on $\mathcal H$. It is called a \emph{Lipschitz immersion} if every point in $\mathcal H$ has a neighbourhood $U$ such that the restriction of $\phi$ to $U$ is a Lipschitz embedding.

A Lipschitz immersion $\phi:\mathcal H\to \HH^{d+1}$ is called \emph{ruled} if for every point $x\in \mathcal H$ there is an injective continuous curve $\gamma:(-\varepsilon,\varepsilon)\to \mathcal H$ such that $\gamma(0)=x$ and $\phi\circ \gamma$ is a geodesic segment in $\HH^{d+1}$.

A Lipschitz embedding $\phi:\mathcal H\to \HH^{d+1}$ is called \emph{convex} if there is an open convex set $V\subset  \HH^{d+1}$   such that $\phi(\mathcal H)$ is an open subset of $\partial V$ and the orientations induced on $\partial V$ from $V$ and $\mathcal H$ coincide. A Lipschitz immersion $\phi:\mathcal H\to \HH^{d+1}$ is called \emph{locally convex} if every point in $\mathcal H$ has a neighbourhood $U$ such that the restriction of $\phi$ to $U$ is a convex Lipschitz embedding.
\end{defi}

We can use convex ruled Lipschitz embeddings to define convex ruled hyperbolic embedding structures, the Riemannian counterpart of convex ruled spacelike $\AdS$ structures.

\begin{defi}
  Let $M$ be a $d$-dimensional oriented manifold. A {\em hyperbolic embedding atlas} on $M$ is the data of an atlas $(U_i, \phi_i)_{i\in I}$ where $(U_i)_{i\in I}$ is an open cover of $M$ and $\phi_i: U_i \to \HH^{d+1}$ is a Lipschitz embedding, such that for all $i,j \in I$ and all $x\in U_i\cap U_j$, there exists an orientation preserving isometry $g$ of $\HH^{d+1}$ such that
  \[\phi_j = g\circ \phi_i\]
  in a neighbourhood of $x$.

A \emph{hyperbolic embedding structure} is a maximal hyperbolic embedding atlas. It is called \emph{convex ruled} if the local charts $\phi_i$ are locally convex and ruled.

Two  hyperbolic embedding structures $(U_i, \phi_i)_{i\in I}$ and $(V_j, \psi_j)_{j\in J}$ on $M$ are \emph{isomorphic} if for every $(i,j)\in I\times J$ and every $x\in U_i\cap V_j$, there exists an orientation preserving isometry $g$ of $\HH^{d+1}$ such that
\[ \psi_j = g\circ \phi_i \]
in a neighbourhood of $x$.
\end{defi}

The  method  used for Corollary \ref{cor-developing map spacelike AdS structure} also provides a developing map and a holonomy representation for convex ruled hyperbolic embedding structures. 

\begin{lemma} \label{lem - developing map hyperbolic embedding structure}
Let $(U_i,\phi_i)$ be a hyperbolic embedding atlas on $M$. Then there exists a representation $\rho:\pi_1(M)\to \SO_\circ(d+1,1)$ and a $\rho$-equivariant Lipschitz immersion $\rho:\tilde M\to \HH^{d+1}$ such that, for all $x\in \tilde M$ and all $U_i$ containing $\pi(x)$, there exists $g\in \SO_\circ(d+1,1)$ such that \[ \phi_i\circ \pi=g\circ\phi\] in a neighbourhood of $x$. (Here $\pi:\tilde M\to M$ denotes the universal covering map.)

Moreover, if another pair $(\rho',\phi')$ satisfies the same properties, then there is a unique $g\in\SO_\circ(d+1,1)$ such that $\rho'=g\rho g^{-1}$ and $\phi'=g\circ\phi$.
\end{lemma}

Just as in the $\AdS$ case, the converse also holds: a pair $(\rho,\phi)$ where $\rho:\pi_1(\tilde M)\to \SO_\circ(d+1,1)$ is a representation and $\phi:\tilde M\to \HH^{d+1}$ is a $\rho$-equivariant Lipschitz immersion determines a hyperbolic embedding structure on $M$. Finally, the map $\phi$ is convex ruled if and only if the corresponding hyperbolic embedding structure is convex ruled.

 The hyperbolic manifolds associated to convex ruled hyperbolic embedding structures, hyperbolic ends, require a precise definition. In simple terms, a hyperbolic end is a hyperbolic manifold with compact concave boundary, which is maximal in the sense of inclusion for this condition. In order to give a precise definition, we need to define what we mean by a hyperbolic manifold with concave boundary.


\begin{defi}
Let $\phi:\mathcal H\to \HH^{d+1}$ be a convex Lipschitz embedding, and  $V\subset \HH^{d+1}$ a convex open set  such that $\phi(\mathcal H)$ is an open subset of $\partial V$. We define the set $\mathcal N(\phi)$ of  \emph{normal vectors} to $\phi$ to be the set of pairs $(x,v)$ where $x\in \mathcal H$ and $v\in T_{\phi(x)}\HH^d$ is a unit vector pointing outside of $V$ such that $v^\perp$ is a support hyperplane of $V$.

The \emph{concave development} of a convex Lipschitz embedding $\phi:\mathcal H\to \HH^{d+1}$ is the  set
\[\mathcal W(\phi)=\set{\exp_{\phi(x)}(tv)}{(x,v)\in \mathcal N(\phi), t\geq 0}. \]
\end{defi}

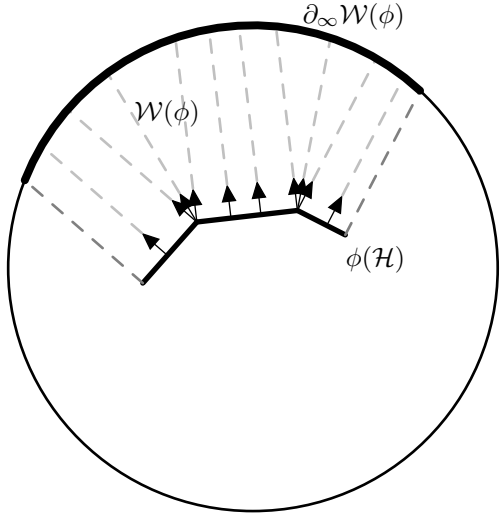
\begin{figure}[h]
\definecolor{cqcqcq}{rgb}{0.7529411764705882,0.7529411764705882,0.7529411764705882}
\definecolor{yqyqyq}{rgb}{0.5019607843137255,0.5019607843137255,0.5019607843137255}
\begin{tikzpicture}[line cap=round,line join=round,>=triangle 45,x=1.0cm,y=1.0cm,scale=0.45]
\clip(-12,-10.84) rectangle (28.24,6.3);
\draw [line width=2.pt] (3.22,-3.96)-- (4.8,-2.16);
\draw [line width=2.pt] (4.8,-2.16)-- (7.74,-1.82);
\draw [line width=2.pt] (7.74,-1.82)-- (9.14,-2.54);
\draw [line width=1.pt] (6.44,-3.52) circle (7.156870824599253cm);
\draw [->,line width=0.5pt] (3.938662575831532,-3.1412704832299) -- (3.187121379911219,-2.481584322366514);
\draw [->,line width=0.5pt] (4.8,-2.16) -- (4.048458804079692,-1.5003138391366186);
\draw [->,line width=0.5pt] (4.8,-2.16) -- (4.288516140232317,-1.3007071155902934);
\draw [->,line width=0.5pt] (4.8,-2.16) -- (4.685119399862813,-1.1666206929313867);
\draw [->,line width=0.5pt] (5.79821444880811,-2.044560233811307) -- (5.683333848670924,-1.051180926742693);
\draw [->,line width=0.5pt] (6.67344780345237,-1.9433427710293179) -- (6.5585672033151825,-0.9499634639607037);
\draw [->,line width=0.5pt] (7.74,-1.82) -- (7.625119399862814,-0.8266206929313854);
\draw [->,line width=0.5pt] (7.74,-1.82) -- (8.197348012620763,-0.9307121976818494);
\draw [->,line width=0.5pt] (7.74,-1.82) -- (7.927112107889996,-0.8376614335775253);
\draw [->,line width=0.5pt] (8.599522272433829,-2.262040025823112) -- (9.05687028505459,-1.3727522235049612);
\draw [line width=1.pt,dash pattern=on 5pt off 5pt,color=yqyqyq] (3.22,-3.96)-- (-0.23181306210852948,-0.9300752010380687);
\draw [line width=1.pt,dash pattern=on 5pt off 5pt,color=cqcqcq] (3.187121379911219,-2.481584322366514)-- (0.2593399506261076,0.08835715445041648);
\draw [line width=1.pt,dash pattern=on 5pt off 5pt,color=cqcqcq] (4.048458804079692,-1.5003138391366186)-- (1.0220622780636754,1.1561897781441062);
\draw [line width=1.pt,dash pattern=on 5pt off 5pt,color=cqcqcq] (4.685119399862813,-1.1666206929313867)-- (4.172261723991295,3.268089798428205);
\draw [line width=1.pt,dash pattern=on 5pt off 5pt,color=cqcqcq] (5.683333848670924,-1.051180926742693)-- (5.1546360602336625,3.5204999497442193);
\draw [line width=1.pt,dash pattern=on 5pt off 5pt,color=cqcqcq] (6.5585672033151825,-0.9499634639607037)-- (6.029479695508713,3.62508733883641);
\draw [line width=1.pt,dash pattern=on 5pt off 5pt,color=cqcqcq] (7.625119399862814,-0.8266206929313854)-- (7.112596413198242,3.6051957211681462);
\draw [line width=1.pt,dash pattern=on 5pt off 5pt,color=cqcqcq] (8.197348012620763,-0.9307121976818494)-- (10.045266793814678,2.6624632101952073);
\draw [line width=1.pt,dash pattern=on 5pt off 5pt,color=cqcqcq] (9.05687028505459,-1.3727522235049612)-- (10.851041043154133,2.115913139466372);
\draw [line width=1.pt,dash pattern=on 5pt off 5pt,color=yqyqyq] (9.14,-2.54)-- (11.325637763451418,1.7098512067110894);
\draw [line width=1.pt,dash pattern=on 5pt off 5pt,color=cqcqcq] (4.288516140232317,-1.3007071155902934)-- (2.184408657375351,2.234193455609408);
\draw [line width=1.pt,dash pattern=on 5pt off 5pt,color=cqcqcq] (7.927112107889996,-0.8376614335775253)-- (8.70907488085043,3.267643124464751);
\draw [shift={(6.44,-3.52)},line width=3pt]  plot[domain=0.8194133927423888:2.771309408192698,variable=\t]({1.*7.156870824599255*cos(\t r)+0.*7.156870824599255*sin(\t r)},{0.*7.156870824599255*cos(\t r)+1.*7.156870824599255*sin(\t r)});
\draw (8.88,-2.6) node[anchor=north west] {$\phi(\mathcal H)$};
\draw (7.62,4.58) node[anchor=north west] {$\partial_\infty\mathcal W(\phi)$};
\draw (2.7,1.7) node[anchor=north west] {$\mathcal W(\phi)$};
\end{tikzpicture}
\caption{The concave development}
\label{fig:concave_development}
\end{figure}

\begin{defi} \label{def:concave_hyperbolic_manifold}
A \emph{concave hyperbolic atlas} on a  topological space $X$ is the data of an atlas $(U_i,\phi)_{i\in I}$ where $(U_i)_{i\in I}$ is an open cover of $X$ and the maps $\phi_i:U_i\to \HH^{d+1}$ have the following properties:
\begin{enumerate}
\item Each $\phi_i$ is a homeomorphism from $U_i$ to $\phi_i(U_i)$,
\item For each $i\in I$ there is a convex Lipschitz embedding $\psi_i:\mathcal H_i\to \HH^{d+1}$ such that  $\phi_i(U_i)=\mathcal W(\psi_i)$,
\item For every pair $i,j\in I$ there is an element $g_{ij}\in \SO_\circ(d+1,1)$ such that $\phi_j=g_{ij}\circ \phi_i$ on $U_i\cap U_j$.
\end{enumerate}
A \emph{concave hyperbolic manifold} is the data of a connected Hausdorff second countable  topological space $X$ and a maximal concave hyperbolic atlas on $X$.
\end{defi}

Note that a concave hyperbolic manifold is always a topological manifold with boundary, the interior being a smooth hyperbolic manifold.

\begin{defi}
A \emph{hyperbolic end} of dimension $d+1$ is a concave hyperbolic manifold with non empty compact boundary, and which is maximal (in the sense of inclusion) under this condition.
\end{defi}

Examples of hyperbolic ends arise naturally as the components of the complement of the convex core in a convex cocompact hyperbolic manifold. Let us focus on the quasifuchsian case.

\begin{defi}
A \emph{quasifuchsian hyperbolic manifold} of dimension $d+1$ is a complete hyperbolic manifold $N$ homeomorphic to $M\times \R$, with $M$  a closed manifold of dimension $d$, and which contains a non-empty compact subset $K\subset N$ with the following convexity property: for any $x,y\in K$, any geodesic of $N$ joining $x$ and $y$ is contained in $K$.\\
The smallest such compact set $K$ is called the \emph{convex core} of $N$, denoted by $C(N)$. A quasifuchsian hyperbolic manifold is called \emph{Fuchsian} if its convex core is a totally geodesic hypersurface.
\end{defi}

If $N$ is a quasifuchsian hyperbolic manifold, then $N\setminus C(N)$ has two connected components, and the closure of each component is a hyperbolic end. When $d=1$, a hyperbolic end is just a funnel (the quotient by a hyperbolic isometry of a half hyperbolic plane bounded by the axis of this isometry). When $d\geq 2$, a hyperbolic end cannot always be obtained as an end of a quasifuchsian hyperbolic manifold. Indeed, a hyperbolic end with boundary homeomorphic to $M$ induces a holonomy representation $\rho:\pi_1(M)\to \SO_\circ(d+1,1)$, which is faithful and discrete for a quasifuchsian manifold, but may fail to be either for a hyperbolic end.\\

A hyperbolic end $E$ actually comes with two boundaries. The first one,  called the \emph{pleated boundary} of $E$ and denoted by $\partial_0E$, is the one already mentioned after Definition \ref{def:concave_hyperbolic_manifold},  obtained when $E$ is seen as a manifold with boundary. We will discuss its geometry further in the next section.

The other boundary is the ideal boundary $\partial_\infty E$. It is homeomorphic to $M$ and locally modelled on  $\partial_\infty\HH^{d+1}=\mathbb S^d$ with the action of $\SO_\circ(d+1,1)$ by M\"obius transformations. In other words, $\partial_\infty E$ is equipped with a conformally flat structure. Moreover, this correspondence between hyperbolic ends homeomorphic to $M\times [0,+\infty)$ and conformally flat structures on $M$ is a bijection, according to the following:

\begin{theorem}[Kulkarni--Pinkall \cite{kulkarni-pinkall}] \label{tm:kp}
Let $M$ be an oriented closed manifold of dimension $d$ whose fundamental group is not virtually abelian.\footnote{This topological condition ensures that conformally flat structures on $M$ are \emph{hyperbolic} as defined in \cite{kulkarni-pinkall}, and is satisfied by closed hyperbolic manifolds as well as Gromov--Thurston manifolds.} Then every conformally flat  structure on $M$ is the ideal boundary of a unique hyperbolic end with interior homeomorphic to $M\times (0,+\infty)$.
\end{theorem}


When $d=2$, the group $\SO_\circ(3,1)$ is isomorphic to ${\rm PSL}(2,\C)$, and a theorem of Gallo--Kapovich--Marden \cite{gallo-kapovich-marden} states that any non elementary representation of the fundamental group of a closed surface into $\mathrm{SL}(2,\C)$ descends to the holonomy representation of a conformally flat structure, thus providing an abundance of non quasifuchsian hyperbolic ends.

The bridge between convex ruled hyperbolic embedding structures and hyperbolic ends is the pleated boundary. The following lemma can be inferred from \cite[Theorems 5.9 and 8.6]{kulkarni-pinkall} (see also \cite{smith_moduli}), but we can also give a direct proof without going through conformally flat structures.

\begin{lemma}\label{lem : structure pleated boundary} If $E$ is a hyperbolic end, then the pleated boundary $\partial_0 E$ carries a convex ruled hyperbolic embedding structure.
\end{lemma}

\begin{proof}
Following  Definition \ref{def:concave_hyperbolic_manifold}, we see that the restriction of charts of a concave hyperbolic atlas to $\partial_0E$ are  convex Lipschitz embeddings. The fact that $\partial_0E$ is ruled comes from the maximality of an end: if $\partial_0E$ were strictly concave, we could ``push'' it to obtain a larger concave hyperbolic manifold (see Figure \ref{fig:convex_ruled_pleated_boundary}). More precisely, consider a point $x\in \partial_0E$ at which the property fails, and a concave hyperbolic atlas for which $x$ is in a unique chart $(U_i,\phi_i)$. Then consider a support hyperplane $H\subset \HH^{d+1}$ to $V_i$ at $\phi_i(x)$ (following the notations of Definition \ref{def:concave_hyperbolic_manifold}). The failure of the ruling at $x$ means that one can push $H$ slightly to a hyperplane $H'$ that cuts the image of $\phi$ in an arbitrarily small neighborhood of $\phi_i(x)$. In particular, we can assume that this neighborhood is not contained in any other chart. Replacing $V_i$ with its intersection with the half-space bounded by $H'$ that does not contain $\phi_i(x)$ and keeping the other charts, we get a concave hyperbolic atlas on a larger manifold, thus contradicting the maximality of $E$.
\end{proof}

\begin{figure}[h]
\begin{tikzpicture}[line cap=round,line join=round,>=triangle 45,x=1.0cm,y=1.0cm,scale=0.5]
\clip(-4.3,-6.1) rectangle (28.24,4);
\draw[line width=1.pt] (2.04,-3.94) -- (8.7,-6.02) -- (13.88,-3.42) -- (4.58,3.24) -- cycle;
\draw [line width=1.pt] (2.04,-3.94)-- (8.7,-6.02);
\draw [line width=1.pt] (8.7,-6.02)-- (13.88,-3.42);
\draw [line width=1.pt] (13.88,-3.42)-- (4.58,3.24);
\draw [line width=1.pt] (4.58,3.24)-- (2.04,-3.94);
\draw [line width=1.pt] (13.88,-3.42)-- (6.02,-0.78);
\draw [line width=1.pt] (6.02,-0.78)-- (2.04,-3.94);
\draw (5,0.5) node[anchor=north west] {$\phi_i(x)$};
\draw (6.46,-3.42) node[anchor=north west] {$V_i$};
\draw (4.4,2.52) node[anchor=north west] {$V'_i$};
\draw (9.7,1.38) node[anchor=north west] {$H$};
\draw (10.88,0.02) node[anchor=north west] {$H'$};
\draw [line width=1.pt] (-0.64,-2.74)-- (12.492680072701319,1.124872814188377);
\draw [line width=1.pt] (-0.48,-3.46)-- (12.694908583901139,0.41730042409102674);
\end{tikzpicture}
\caption{Proof of Lemma \ref{lem : structure pleated boundary}.}
\label{fig:convex_ruled_pleated_boundary}
\end{figure}

We now wish to see how any convex ruled hyperbolic embedding structure can be obtained as the pleated boundary of a hyperbolic end. 

\begin{lemma} \label{lem:convex ruled hyperbolic embedding structure to hyperbolic end}
Any convex ruled hyperbolic embedding structure can be obtained as the pleated boundary of a hyperbolic end, unique up to isometry.
\end{lemma}

This could be obtained by   \cite[Theorem 10.6]{kulkarni-pinkall}, but we provide here a direct construction without going through conformally flat structures, inspired by  \cite[Section 4]{smith_moduli}.


\begin{proof}[Proof of Lemma \ref{lem:convex ruled hyperbolic embedding structure to hyperbolic end}]
Let  $(U_i,\phi_i)_{i\in I}$ be  a  convex ruled hyperbolic embedding atlas on a compact manifold $M$.   We obtain a concave hyperbolic manifold with pleated boundary isometric to $M$ by considering:
  \[ E=\bigsqcup_{i\in I}\mathcal W(\phi_i)/\sim \]
  where we identify $\mathcal W(\phi_i\vert_{U_i\cap U_j})$ with $\mathcal W(\phi_j\vert_{U_i\cap U_j})$ through the element $\gamma_{ij}\in\SO_\circ(d+1,1)$ such that $\phi_i\vert_{U_i\cap U_j} =\gamma_{ij}\circ \phi_j\vert_{U_i\cap U_j} $.
  
  Consider a concave hyperbolic manifold with compact pleated boundary $E'\supset E$, and let $(x,x')\in \partial_0E\times\partial_0E'$ be maximising the distance. In a chart, we find that $\partial_0E'$ must be (locally around $x'$) included in the $r$-neighbourhood of a geodesic where $r=d(x,x')$ (because $\partial_0E$ is ruled). But for $r>0$, the $r$-neighbourhood of a geodesic in $\HH^{d+1}$ is strictly convex, thus $r=0$, i.e. $E'=E$ and $E$ is maximal.
  
  Now consider another hyperbolic end $E'$ inducing  the same convex ruled hyperbolic embedding structure on the pleated boundary $\partial_0E'\approx M$. If $(V_j,\psi_j)_{j\in J}$ is a concave hyperbolic atlas on $E'$, then whenever $V_j\cap \partial_0E'\neq\emptyset$, $\psi_j(V_j)$ contains a neighbourhood of $\psi_j(V_j\cap \partial_0E')$ in $\mathcal W(\psi_j\vert_{V_j\cap \partial_0E'})$. So $E'$ contains a neighbourhood of $\partial_0E$ in $E$. By maximality of $E'$, we find that $E\subset E'$, and by maximality of $E$ we must have $E=E'$, hence the uniqueness.
  
  \end{proof}
  
  Note that we can also obtain the flat conformal structure from this construction. If $\phi:\mathcal H\to \HH^{d+1}$ is a convex Lipschitz embedding, we can consider the ideal boundary $\partial_\infty \mathcal W(\phi)$ of its concave development:
  
  \[ \partial_\infty \mathcal W(\phi)=\overline{\mathcal W(\phi)}\cap \partial_\infty \HH^{d+1}=\set{ \lim_{t\to+\infty}\exp_{\phi(x)}(tv)}{(x,v)\in \mathcal N(\phi)}\subset \partial_\infty \HH^{d+1}=S^d.\]

One gets a conformally flat manifold by setting:
\[ \partial_\infty E= \bigsqcup \partial_\infty\mathcal W(\phi_i)/\sim\]
  where we identify $\partial_\infty\mathcal W(\phi_i\vert_{U_i\cap U_j})$ with $\partial_\infty\mathcal W(\phi_j\vert_{U_i\cap U_j})$ through the element $\gamma_{ij}\in\SO_\circ(d+1,1)$ such that $\phi_i\vert_{U_i\cap U_j} =\gamma_{ij}\circ \phi_j\vert_{U_i\cap U_j} $.\\

As a consequence of Lemma \ref{lem : structure pleated boundary} and Lemma  \ref{lem:convex ruled hyperbolic embedding structure to hyperbolic end}, we get the following hyperbolic version of Lemma \ref{lm:AdSquasifucsianConvexRuledSpacelikeStructure}:

\begin{lemma} \label{lm:HyperbolicEndConvexRuledEmbeddingStructure}
Let $M$ be a closed manifold. There is a one-to-one correspondence between convex ruled hyperbolic embedding structures on $M$ up to equivalence and hyperbolic ends $E$ with pleated boundary $\partial_0E\approx M$ up to isometry.   
\end{lemma}



\section{Spherical and de Sitter polygons} 
\label{sc:polygons}

This section is devoted to the study of polygons in the sphere $\bS^2$ and spacelike polygons in the de Sitter space $\dS^2$. We construct a moduli space of such polygons, which is a smooth manifold, and describe various subsets of this moduli space, namely equilateral polygons, and equilateral polygons with a central symmetry.

As we will see in the next sections, spherical and de Sitter polygons parametrize bendings of Gromov--Thurston manifolds in the hyperbolic and anti-de Sitter space respectively. Once this is established, the results of the present section will readily prove the main theorems of the paper.

\subsection{Spherical polygons and their deformations}
\label{ssc:polygons}

Let us start with the more familiar setting of spherical polygons. We will use the following definition:\\

\begin{defi}
A spherical $k$-gon, $k\geq 3$, is a tuple $(v_i)_{i\in \Z/k\Z}$ of pairwise distinct points in $\mathbb S^2$ such that
\begin{enumerate}
\item $0 < d(v_i,v_{i+1}) < \pi$ for all $i\in \Z/k\Z$,
\item $(v_i,v_{i+1}) \cap [v_j,v_{j+1}] = \emptyset$ for $i\neq j$.
\end{enumerate}
\end{defi}

\begin{remark}
Condition 1 guarantees that two consecutive vertices $v_i, v_{i+1}$ are never antipodal, so that they are joined by a unique geodesic segment. Condition 2 and the fact that the vertices are pairwise distinct guarantee that our polygons do not have ``crossings'', so that the union of all edges $\bigcup_{i\in \Z/k\Z}[v_i,v_{i+1}]$ is an embedded topological circle.
\end{remark}

\begin{remark}
Our polygons are \emph{labelled}, meaning that a polygon is considered different from another one with the same vertices permuted. In particular, the polygon $(v_1,\ldots, v_k)$ is different from $(v_k, \ldots, v_1)$, so our polygons are also \emph{oriented}.
\end{remark}

The set $\mathcal U_k$ of spherical $k$-gons is clearly an open subset of $(\bS^2)^k$ and thus inherits a structure of $2k$-dimensional manifold. The group $\SO(3)$ acts smoothly on $\mathcal U_k(\bS)$ and this action is free (since the stabilizer of a given polygon fixes two non-antipodal points on the sphere). The quotient space
\[\mathcal P_k(\bS) \equaldef \SO(3) \backslash \mathcal U_k\]
is thus a manifold of dimension $2k-3$ which we call the \emph{moduli space of (labelled) $k$-gons}.

Given $p= (v_1,\ldots, v_k)$ a spherical polygon in $\bS^2$, let us introduce the following auxiliary vectors:
\begin{itemize}
\item $u_i^+$ is the unit vector in $T_{v_i} \bS^2 = v_i^\perp$ directing the edge $[v_i,v_{i+1}]$,
\item $u_i^-$ is the unit vector in $T_{v_i} \bS^2 = v_i^\perp$ directing the edge $[v_i,v_{i-1}]$,
\item $w_i = v_i \times u_i^+$ is the unit vector completing $(v_i, u_i^+)$ into an oriented orthonormal basis.
\end{itemize}
(These vectors depend on $p$ in the same way the vertices $v_i$ do, but we omit to write this dependence in order to lighten notations.)

Finally, we define $ l_i(p)$ to be the length of the edge $[v_i,v_{i+1}]$ and $\theta_i(p)$ to be the oriented angle at $v_i$ between $u_i^+$ and $-u_i^-$, i.e. $\theta_i(p) \in (-\pi,\pi)$ is such that
\[-u_i^- = \cos(\theta_i(p)) u_i^+ + \sin(\theta_i(p)) w_i~.\]
With this convention, $\theta_i = 0$ if and only if $v_{i-1}$, $v_i$ and $v_{i+1}$ are aligned.

The functions $l_i: \mathcal U_k \to (0,\pi)$ and $\theta_i: \mathcal U_k \to (-\pi, \pi)$ are clearly smooth and $\SO(3)$-invariant. They thus factor to smooth functions on $\mathcal P_k(\bS)$.

\begin{theorem} \label{tm:polygons}
The map 
\begin{align*}
\Phi: \mathcal P_k(\bS) & \to (0,\pi)^k\times (-\pi,\pi)^k\\
p & \mapsto (l_1(p),\ldots, l_k(p), \theta_1(p),\ldots, \theta_k(p))
\end{align*}
is an embedding, and the image of $\mathrm d \Phi$ at a point $p$ is the set of tuples $(\dot l_1,\ldots, \dot l_k, \dot \theta_1, \ldots , \dot \theta_k)$ such that
  \begin{equation} \label{eq:variation polygons S2}    \sum_{i=1}^k \dot \theta_i v_i - \dot l_i w_i =0~.
  \end{equation}
\end{theorem}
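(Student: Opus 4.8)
The plan is to encode a polygon by a discrete moving frame and to extract both the injectivity of $\Phi$ and the constraint on $\mathrm d\Phi$ from the resulting monodromy. Attach to each vertex the frame $F_i = (v_i \mid u_i^+ \mid w_i) \in \mathrm{SO}(3)$ whose columns are the positively oriented orthonormal triple $(v_i,u_i^+,w_i)$, so that $F_i e_1 = v_i$, $F_i e_2 = u_i^+$, $F_i e_3 = w_i$ for the standard basis $e_1,e_2,e_3$. Parametrizing the edge by the geodesic $t\mapsto \cos t\,v_i + \sin t\,u_i^+$ shows that transport along $[v_i,v_{i+1}]$ carries $F_i$ to the frame $(v_{i+1}\mid -u_{i+1}^-\mid w_i)$, i.e. right-multiplies $F_i$ by the rotation $A(l_i)$ of angle $l_i$ about the third axis; the defining relation of $\theta_{i+1}$ then identifies the passage from this transported frame to $F_{i+1}$ with right-multiplication by the rotation $B(-\theta_{i+1})$ of angle $-\theta_{i+1}$ about the first axis. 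Hence
\[
F_{i+1} = F_i\, A(l_i)\, B(-\theta_{i+1}), \qquad i \in \Z/k\Z .
\]

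First I would prove injectivity and the embedding property. Normalizing $F_1$ to a fixed element of $\mathrm{SO}(3)$ consumes exactly the freedom of the $\mathrm{SO}(3)$-action, and the recursion then determines every $F_i$, hence every vertex $v_i$, from the numbers $(l_i,\theta_i)$ alone. Thus two polygons with the same image under $\Phi$ agree up to $\mathrm{SO}(3)$, so $\Phi$ is injective; moreover this reconstruction is smooth in $(l,\theta)$, so the inverse of $\Phi$ on its image is continuous, upgrading the injective immersion (established below) to a homeomorphism onto its image, i.e. an embedding.

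The heart of the argument is the differential. For a one-parameter family of polygons let $\Omega_i \in \R^3$ be the spatial angular velocity of $F_i$, defined by $\dot F_i = \widehat{\Omega_i}\,F_i$ with $\widehat{\omega}\,x = \omega \times x$. Differentiating the recursion gives $\widehat{\Omega_{i+1}-\Omega_i} = F_i\,(\dot M_i M_i^{-1})\,F_i^{-1}$ with $M_i = A(l_i)B(-\theta_{i+1})$, and a short computation yields $\dot M_i M_i^{-1} = \widehat{\dot l_i\, e_3 - \dot\theta_{i+1}(\cos l_i,\sin l_i,0)}$. Applying $F_i$ and using $F_i e_3 = w_i$ together with $F_i(\cos l_i,\sin l_i,0)^{\top} = v_{i+1}$ gives the discrete structure equation
\[
\Omega_{i+1}-\Omega_i = \dot l_i\, w_i - \dot\theta_{i+1}\, v_{i+1}.
\]
Summing over $i\in\Z/k\Z$ the left-hand side telescopes to $0$ (since $F_{k+1}=F_1$ forces $\Omega_{k+1}=\Omega_1$), and after reindexing this is precisely $\sum_i(\dot\theta_i v_i - \dot l_i w_i)=0$, so $\operatorname{im}\mathrm d\Phi$ lies in the stated subspace $V$. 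The same equation yields the immersion property: if all $\dot l_i=\dot\theta_i=0$ then all $\Omega_i$ equal a common $\Omega$, so the variation is the infinitesimal rotation $\widehat{\Omega}$ and hence vanishes in $\mathcal P_k(\bS)$.

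It remains to match dimensions, which I expect to be the main obstacle since it is the least mechanical step. The space $V$ is the kernel of $L(\dot l,\dot\theta)=\sum_i(\dot\theta_i v_i - \dot l_i w_i)$, whose image is spanned by $\{v_i\}\cup\{w_i\}$. The point is that $L$ has full rank $3$: if some nonzero $n$ satisfied $n\cdot v_i=0$ for all $i$, the vertices would lie on the great circle $n^\perp$, forcing each $u_i^+$ into $n^\perp$ as well, whence $w_i = v_i\times u_i^+$ is parallel to $n$ and $n\cdot w_i\neq 0$, so no such $n$ can be orthogonal to every $v_i$ and $w_i$. Therefore $\dim V = 2k-3 = \dim\mathcal P_k(\bS)$, and the injective linear map $\mathrm d\Phi$ into $V$ is onto $V$. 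Combined with the injectivity and the continuity of the reconstruction, this shows $\Phi$ is an embedding with $\operatorname{im}\mathrm d\Phi = V$, as claimed.
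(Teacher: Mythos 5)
Your proof is correct, and it takes a genuinely different route from the paper's on every key step. The paper treats injectivity of $\Phi$ as ``well-known''; proves immersivity by building, from a variation with $\mathrm d l_i(\dot p) = \mathrm d\theta_i(\dot p) = 0$, skew endomorphisms $a_i \in \so(3)$ defined on the bases $(v_i, v_{i+1}, w_i)$ and showing $a_i = a_{i-1}$; and, for the image of $\mathrm d\Phi$, performs the same dimension count as you but verifies Equation \eqref{eq:variation polygons S2} only on a generating family of variations (a single vertex pushed along $u_2^+$) at polygons with no three consecutive vertices aligned, via explicit coordinate computation of $\dot\theta_1, \dot\theta_2, \dot l_1, \dot l_2$, and then extends to all of $\mathcal P_k(\bS)$ by linearity, symmetry and density. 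Your frame recursion $F_{i+1} = F_i A(l_i) B(-\theta_{i+1})$ replaces all of this: injectivity becomes reconstruction of the polygon from $(l,\theta)$ after normalizing $F_1$, which in addition makes the ``well-known'' step precise and hands you continuity of the inverse for free; differentiating the recursion gives the structure equation $\Omega_{i+1} - \Omega_i = \dot l_i w_i - \dot\theta_{i+1} v_{i+1}$, whose cyclic sum yields Equation \eqref{eq:variation polygons S2} for \emph{every} variation at \emph{every} polygon, with no genericity hypothesis, no density argument and no coordinate computation; and its degenerate case $\dot l = \dot\theta = 0$ gives immersivity, your common $\Omega$ being exactly the paper's common $a \in \so(3)$ (so this part is the same idea, derived more systematically). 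You also prove the spanning claim --- that the $v_i$ and $w_i$ span $\R^3$ --- which the paper merely asserts, and your count $\dim V = 2k-3$ corrects what is evidently a typo in the paper ($k-3$). What the paper's route buys is elementary self-containedness and explicit variation formulas that have independent interest; what yours buys is uniformity and portability: the identical formalism, with $A$ a rotation and $B$ a boost in $\SO_\circ(2,1)$, gives Theorem \ref{tm:polygons-ds} verbatim, with the sign change in Equation \eqref{eq:polygons-ds} emerging automatically from the Lorentzian cross product rather than from a re-done computation.
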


%

\begin{proof}[Proof of Theorem \ref{tm:polygons}]
 
It is well-known that a polygon is characterized up to isometry by its lengths and angles, so that $\Phi$ is a homeomorphism onto its image. While the fact that $\Phi$ is an immersion is also quite intuitive, let us prove it with a little more care. 

Fix $p\in \mathcal U_k(\bS)$ and $\dot p = (\dot v_1, \ldots \dot v_k)$ a tangent vector at $p$. Assume that $\mathrm d l_i(\dot p) = \mathrm d \theta_i(\dot p) = 0$. We need to prove the existence of $a \in \so(3)$ such that $\dot v_i = a v_i$ for all $i$.

For each $i$, note that $(v_i,v_{i+1}, w_i)$ is a basis of $\R^3$ for each $i$. Denoting by $\dot w_i$ the first order variation of $w_i$, we have
\begin{align*}
\langle \dot w_i, w_i\rangle &=0\\
\langle \dot w_i, v_i\rangle &= - \langle w_i, \dot v_i\rangle\\
\langle \dot w_i, v_{i+1} \rangle &= - \langle w_i, \dot v_{i+1}\rangle
\end{align*}
since $w_i$ is a unit vector orthogonal to $v_i$ and $v_{i+1}$. Define $a_i\in \End(\R^3)$ by 
\begin{align*}
a_i v_i&=\dot v_i\\
a_i v_{i+1} &= \dot v_{i+1}\\
a_i w_i &= \dot w_i~.
\end{align*}
The identity
\[\langle a_i v, w\rangle = - \langle v, a_i w\rangle\]
is satisfied on the basis $(v_i,v_{i+1}, w_i)$, hence $a_i\in \so(3)$.
 
Let us now prove that $a_i = a_{i-1}$. By construction, $a_i v_i = \dot v_i = a_{i-1} v_i$. Moreover, we have
\[w_{i-1} = \cos(\theta_i) w_i - \sin(\theta_i) u_i^+~.\]
Since $\dot \theta_i = 0$, we deduce that
\begin{align*}
a_{i-1} w_{i-1} & = \dot w_{i-1} \\
&= \cos (\theta_i) \dot w_i - \sin (\theta_i) \dot u_i^+\\
&= \cos (\theta_i) a_i w_i - \sin(\theta_i) a_i u_i^+\\
&= a_i w_{i-1}~.
\end{align*}
The endomorphism $a_i-a_{i-1}$ is in $\so(3)$ and has a kernel of dimension at least $2$, hence $a_i= a_{i-1}$.

By an immediate induction, we conclude that all the $a_i$ are equal to the same $a\in \so(3)$, which then satisfies $\dot v_i = av_i$ for all $i$. This proves that the map $\Phi$ is an embedding.\\
 
Let us now characterize the image of $\mathrm d \Phi$. Note that, since the $v_i$ and $w_i$ span $\R^3$, the space of tuples $(\dot l_1,\ldots, \dot l_k, \dot \theta_1, \ldots , \dot \theta_k)$ satisfying Equation \eqref{eq:variation polygons S2} has dimension $k-3$. By equality of dimension, it is thus enough to verify that the equation is satisfied on the image of $\mathrm d \Phi$, and by linearity it suffices to prove it for first order deformations where only one vertex, say $v_2$, is moving. 

Fix a polygon $p=(v_1,\ldots, v_k)$ and assume first that no three consecutive vertices are aligned. Then $(u_i^-, u_i^+)$ form a basis of $T_{v_i} \bS^2$ for each $i$, and it is enough to verify that the relation \eqref{eq:variation polygons S2} is satisfied for a first order variation of $p$ when only one of the $v_i$ moves in the direction $u_i^-$ or $u_i^+$. Let us thus prove that \eqref{eq:variation polygons S2} holds when $\dot v_2 = u_2^+$ and $\dot v_i = 0$ for $i\neq 2$ (this is enough by symmetry of the problem).

We can compute first order variations of $\theta_i$ and $l_i$ and get
  \begin{enumerate}
  \item $\dot \theta_1 = \frac{\sin(\theta_2)}{\sin(l_1)}$,
  \item $\dot \theta_2= - \sin(\theta_2) \cotan(l_1)$,
  \item $\dot l_1=\cos(\theta_2)$,
  \item $\dot l_2=-1$,
  \item $\dot l_i= \dot \theta_i = 0$ for $i\notin \{1,2\}$.
  \end{enumerate}

Writing coordinates in the orthonormal basis $(v_2, - u_2^-, w_1)$, we have:
  \begin{eqnarray*}
    v_1 & = & (\cos(l_1), -\sin(l_1), 0)~, \\
    v_2 & = & (1,0,0)~, \\
    w_1 & = & (0,0,1)~, \\
    w_2 & = & (0,\sin(\theta_2),\cos(\theta_2))~. 
  \end{eqnarray*}

And we conclude that
  $$ \sum_{i=1}^k \dot \theta_i v_i - \dot l_i w_i = \dot \theta_1 v_1 + \dot \theta_2 v_2 - \dot l_1 w_1 - \dot l_2 w_2 = $$
  $$ \begin{array}{cclcrc}
       \frac{\sin(\theta_2)}{\sin(l_1)} & \cdot & (\cos(l_1), & -\sin(l_1),& 0) & + \\
       (-\sin(\theta_2) \cotan(l_1)) & \cdot  & (1, & 0, & 0) & + \\
       -\cos(\theta_2) & \cdot & (0, & 0, & 1) & + \\
       1 & \cdot & (0, & \sin(\theta_2), & \cos(\theta_2)) & \\
  \end{array} $$
  which is equal to $0$.

We deduce that Equation \eqref{eq:variation polygons S2} holds on the image of $\mathrm d \Phi$ at every $p$ with no three consecutive vertices aligned, and conclude that it holds on all $\mathcal P_k(\bS^2)$ by density.
\end{proof}

\subsection{Spacelike polygons in $\dS^2$ and their deformations}

We know duplicate the above construction for spacelike polygons in $\dS^2$. The results and their proof are formally the same, and we will only stress out the additional technicalities.

The two-dimensional de Sitter space $\dS^2$ is the space of unit spacelike vectors in the $2+1$ dimensional Minkowski space $\R^{2,1}$. More precisely, we equip $\R^3$ with the bilinear symmetric form
\[ \langle x,y\rangle = x_0y_0 + x_1y_1-x_2y_2~. \]
 and consider the Lorentzian submanifold
\[ \dS^{2} = \{ x\in \R^{2,1}~|~\langle x,x\rangle =1 \}~. \]

A timelike vector in $\R^{2,1}$ is \emph{future pointing} if its third coordinate is positive. This defines an orientation of time in $\dS^2$. One can then define an orientation of space\marginnote{DM - isn't it more of an orientation of each spacelike geodesic?} in the following way: given $v\in \dS^2$ and $u\in T_v \dS^2$ spacelike, let $w$ be the unit future pointing vector orthogonal to $v$ and $u$. We say that $u$ is \emph{positive} if $(v,u,w)$ is a direct basis of $\R^{2,1}$. The group of isometries of $\dS^2$ preserving the orientation of time and space is $\SO_\circ (2,1)$, acting linearly on $\dS^2\subset \R^{2,1}$. Geodesics in $\dS^2$ are intersections of $\dS^2$ with linear planes in $\R^{2,1}$.


\begin{defi} \label{def:spacelikepolygon}
A spacelike de Sitter $k$-gon is a tuple $(v_i)_{i\in \Z/k\Z}$ of pairwise distinct points in $\dS^2$ such that
\begin{enumerate}
\item $v_i$ and $v_{i+1}$ are joined by a spacelike segment of length in $(0,\pi)$ for all $i\in \Z/k\Z$,
\item the vector directing the segment $[v_i,v_{i+1}]$ at $v_i$ is positive for all $i$,
\item $(v_i,v_{i+1}) \cap [v_j,v_{j+1}] = \emptyset$ for $i\neq j$.
\end{enumerate}
\end{defi}

\begin{remark}
Condition (1) guarantees that two consecutive vertices $v_i, v_{i+1}$ are never antipodal, so that they are joined by a unique geodesic segment. Condition (3) ensures that our polygons do not have ``crossings'', so that the union of all edges $\bigcup_{i\in \Z/k\Z}[v_i,v_{i+1}]$ is an embedded topological circle.
Finally, by Condition (2), this circle is ``positively oriented''. In particular, its homology class is the positive generator of $\mathrm H_1(\dS^2) = \Z$.
\end{remark}

\begin{figure}[h] 
\includegraphics[scale=1,angle=180]{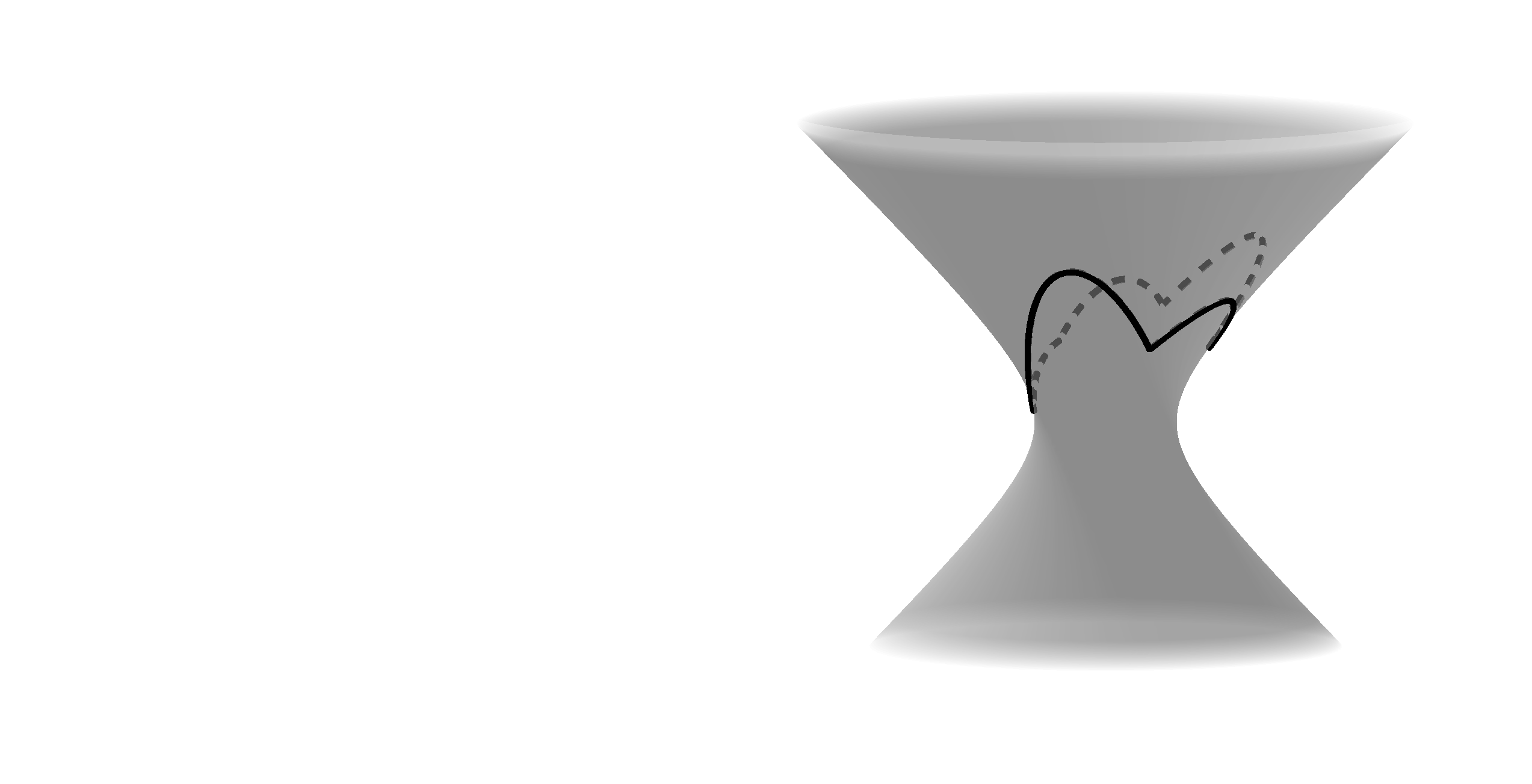}
\caption{A spacelike polygon in $\dS^2$.}
\label{fig:spacelike_polygon}
\end{figure}

%

As in the Euclidean case, the set $\mathcal U_k(\dS)$ of spacelike de Sitter $k$-gons is an open subset of $(\dS^2)^k$ on which the group $\SO_\circ (2,1)$ acts smoothly and freely. Since $\SO_\circ(2,1)$ is not compact anymore, one also needs to remark that this action is proper, which is easy because any element of $\SO_\circ (2,1)$ is entirely characterized by the image of $2$ independent vectors spanning a non-isotropic plane, such as two consecutive vertices of a spacelike polygon. We thus get that the quotient space
\[\mathcal P_k(\dS) \equaldef \SO_\circ(2,1) \backslash \mathcal U_k\]
is a manifold of dimension $2k-3$ which we call the \emph{moduli space of (labelled) spacelike de Sitter $k$-gons}.

Given $p= (v_1,\ldots, v_k)$ a spacelike polygon in $\dS^2$, we introduce again the auxiliary vectors:
\begin{itemize}
\item $u_i^+$ the unit vector in $T_{v_i} \dS^2 = v_i^\perp$ directing the edge $[v_i,v_{i+1}]$,
\item $u_i^-$ the unit vector in $T_{v_i} \dS^2 = v_i^\perp$ directing the edge $[v_i,v_{i-1}]$,
\item $w_i = v_i \times u_i^+$ the unit vector completing $(v_i, u_i^+)$ into an oriented orthonormal basis.
\end{itemize}
Note that we have $\langle w_i, w_i \rangle = -1$ since we are in the Minkowski space.

Finally, we define $ l_i(p)$ to be the length of the edge $[v_i,v_{i+1}]$ and $\theta_i(p)$ to be a ``Lorentzian angle'' at $v_i$ between $u_i^+$ and $-u_i^-$, i.e. $\theta_i(p) \in \R$ is such that
\[-u_i^- = \cosh(\theta_i(p)) u_i^+ + \sinh(\theta_i(p)) w_i~.\]
Again, $l_i: \mathcal U_k(\dS) \to (0,\pi)$ and $\theta_i: \mathcal U_k(\dS) \to \R$ factor to smooth functions on $\mathcal P_k(\dS)$.

\begin{theorem} \label{tm:polygons-ds}
The map 
\begin{align*}
\Phi: \mathcal P_k(\dS) & \to (0,\pi)^k\times \R^k\\
p & \mapsto (l_1(p),\ldots, l_k(p), \theta_1(p),\ldots, \theta_k(p))
\end{align*}
is an embedding, and the image of $\mathrm d \Phi$ at a point $p$ is the set of tuples $(\dot l_1,\ldots, \dot l_k, \dot \theta_1, \ldots , \dot \theta_k)$ such that
  \begin{equation} \label{eq:polygons-ds}
  \sum_{i=1}^k \dot \theta_i v_i + \dot l_i w_i =0~.
  \end{equation}
\end{theorem}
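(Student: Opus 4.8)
The plan is to transcribe the proof of Theorem~\ref{tm:polygons} to the Lorentzian setting almost verbatim, working in $\R^{2,1}$ with $\so(2,1)$ in place of $\R^3$ and $\so(3)$, and to isolate the two places where the Euclidean argument genuinely uses the positive‑definiteness of the round metric. First I would argue that $\Phi$ is a homeomorphism onto its image. A spacelike polygon is reconstructed from its edge lengths $l_i$ and Lorentzian angles $\theta_i$ by placing $v_1,v_2$ — which is possible and unique up to $\SO_\circ(2,1)$, since this group acts simply transitively on positively oriented spacelike segments of a given length — and then propagating, each new vertex $v_{i+1}$ being determined by $v_i$, the incoming direction, the angle $\theta_i$, and the length $l_i$. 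Here the positivity Condition~(2) of Definition~\ref{def:spacelikepolygon} is exactly what pins down the direction at each step and makes the reconstruction unambiguous; this replaces the purely metric reconstruction available on $\bS^2$. Continuity of the inverse is automatic from the same formulas.

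For the immersion statement I would repeat the linear‑algebra argument word for word. Fixing $p$ and a tangent vector $\dot p$ with $\mathrm d l_i(\dot p)=\mathrm d\theta_i(\dot p)=0$, I note that $(v_i,v_{i+1},w_i)$ is still a basis of $\R^{2,1}$ (the first two spanning a spacelike plane, $w_i$ its timelike normal), define $a_i$ on this basis by $a_iv_i=\dot v_i$, $a_iv_{i+1}=\dot v_{i+1}$, $a_iw_i=\dot w_i$, and check $a_i\in\so(2,1)$ exactly as before: the relations $\langle\dot w_i,w_i\rangle=0$, $\langle\dot w_i,v_i\rangle=-\langle w_i,\dot v_i\rangle$, etc.\ use only that $w_i$ is a unit vector (now of norm $-1$) orthogonal to $v_i,v_{i+1}$, so verifying skew‑symmetry with respect to the Minkowski form is unaffected. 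The step $a_i=a_{i-1}$ uses the transition relation, which in the Lorentzian plane $\mathrm{span}(u_i^+,w_i)$ reads $w_{i-1}=\cosh(\theta_i)\,w_i+\sinh(\theta_i)\,u_i^+$ (note the change of sign of the $u_i^+$ term relative to the spherical formula, forced by $\langle w_i,w_i\rangle=-1$); combined with $\dot\theta_i=0$ and $a_iu_i^+=\dot u_i^+$, this gives $a_{i-1}w_{i-1}=a_iw_{i-1}$, so $a_i-a_{i-1}$ vanishes on the $2$‑dimensional subspace $\mathrm{span}(v_i,w_{i-1})$.

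The one step that does \emph{not} transcribe for free — and the part I expect to be the main obstacle — is the conclusion ``$a\in\so(2,1)$ with kernel of dimension $\geq 2$ implies $a=0$''. The Euclidean proof invokes that elements of $\so(3)$ have even rank, a fact that rests on definiteness of the form. In the indefinite case I would instead set $J=\mathrm{diag}(1,1,-1)$ and $K=Ja$; the defining relation $a^TJ+Ja=0$ says precisely that $K$ is antisymmetric in the ordinary sense, and $a=JK$ since $J^2=\mathrm{Id}$. As every antisymmetric $3\times 3$ matrix has rank $0$ or $2$ and $J$ is invertible, $a$ has rank $0$ or $2$ as well, so a kernel of dimension $\geq 2$ forces $a=0$. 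This single observation salvages the whole induction, after which all the $a_i$ coincide with one $a\in\so(2,1)$ satisfying $\dot v_i=av_i$, and $\Phi$ is an immersion, hence an embedding.

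Finally, for the image of $\mathrm d\Phi$ I would run the same dimension‑count‑plus‑verification scheme. As in the spherical case $\{v_i\}\cup\{w_i\}$ spans $\R^{2,1}$, so the linear map $(\dot l,\dot\theta)\mapsto\sum_i\dot\theta_iv_i+\dot l_iw_i$ is onto $\R^{2,1}$ and its kernel has dimension $2k-3=\dim\mathcal P_k(\dS)$; it therefore suffices to check that \eqref{eq:polygons-ds} holds on the image of $\mathrm d\Phi$, and by linearity only for a single moving vertex, say $\dot v_2=u_2^+$. This reduces to computing $\dot\theta_1,\dot\theta_2,\dot l_1,\dot l_2$ in a Minkowski‑orthonormal frame adapted to $v_2$, identical to the spherical computation after replacing the trigonometric functions of $\theta_2$ by their hyperbolic counterparts and tracking the signs coming from the Lorentzian metric — in particular $\langle w_i,w_i\rangle=-1$, which is what turns the $-\dot l_iw_i$ of \eqref{eq:variation polygons S2} into the $+\dot l_iw_i$ of \eqref{eq:polygons-ds}. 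Density of polygons with no three consecutive aligned vertices (those with all $\theta_i\neq 0$, for which $(u_i^-,u_i^+)$ is a basis of the spacelike tangent plane) then propagates the relation to all of $\mathcal P_k(\dS)$.
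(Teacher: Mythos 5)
Your proposal is correct and follows essentially the same route as the paper's own proof, which likewise transposes the spherical argument: injectivity via reconstruction from lengths and angles, immersion via the operators $a_i\in\so(2,1)$ glued along the edges, and the characterization of the image of $\mathrm d\Phi$ via the dimension count, verification on a single moving vertex, and density. In fact you make explicit two points the paper leaves implicit — the Lorentzian transition relation $w_{i-1}=\cosh(\theta_i)\,w_i+\sinh(\theta_i)\,u_i^+$ and the even-rank argument showing that an element of $\so(2,1)$ with a two-dimensional kernel vanishes — and while you only sketch the final coordinate computation that the paper carries out explicitly, the sign change you predict (and its source in $\langle w_i,w_i\rangle=-1$) is exactly what that computation confirms.
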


%

\begin{proof}[Proof of Theorem \ref{tm:polygons-ds}]
 
The proof is almost exactly the same as in the spherical case. 

First, a polygon is characterized up to isometry by its lengths and angles, so that $\Phi$ is a homeomorphism onto its image. 

To prove that $\Phi$ is an immersion, we construct $a_i \in \so(2,1)$ such that $a_i v_i = \dot v_i$ and $a_i v_{i+1} = \dot v_{i+1}$, and we prove that all the $a_i$ are equal by showing that $a_i w_{i-1} = a_{i-1} w_{i-1}$.
 
To characterize the image of $\mathrm d \Phi$, we reduce with the same arguments as in the spherical case to proving that Equation \eqref{eq:polygons-ds} is satisfied at a polygon $p$ where no three consecutive vertices are aligned and along a first order variation where $\dot v_2 = u_2^+$ and $\dot v_i = 0$ for $i\neq 2$.

We obtain similar formulae for the first order variation of the lengths and angles, namely
 \begin{enumerate}
  \item $\dot \theta_1 = \frac{\sinh(\theta_2)}{\sin(l_1)}$,
  \item $\dot \theta_2= - \sinh(\theta_2) \cotan(l_1)$,
  \item $\dot l_1=\cosh(\theta_2)$,
  \item $\dot l_2=-1$,
  \item $\dot l_i = \dot \theta_i = 0$ for $i\notin \{1,2\}$.
  \end{enumerate}

In the orthonormal frame $(v_2, u_2^-, w_1)$, we have:
  \begin{eqnarray*}
    v_1 & = & (\cos(l_1), -\sin(l_1), 0)~, \\
    v_2 & = & (1,0,0)~, \\
    w_1 & = & (0,0,1)~, \\
    w_2 & = & (0,-\sinh(\theta_2),\cosh(\theta_2))~. 
  \end{eqnarray*}

And we compute again that 
  $$ \sum_{i=1}^k \dot \theta_i v_i + \dot l_i w_i = \dot \theta_1 v_1 + \dot \theta_2 v_2 + \dot l_1 w_1 + \dot l_2 w_2 = 0~.$$
 Note the single sign change in the second coordinate of $w_2$, which induces the sign change in Equation \eqref{eq:polygons-ds} compared to Equation \eqref{eq:variation polygons S2}.
\end{proof}

%
%
%
%

\subsection{Convex polygons}

\begin{defi}
A spherical or de Sitter polygon $p$ will be called \emph{convex} if
\[\theta_i(p) \geq 0\]
for all $i$.
\end{defi}

By definition, the union of the edges of a spherical polygon forms an oriented Jordan curve, which separates the sphere into two topological discs. With our (perhaps non-standard) convention, a polygon is convex if and only if the disc \emph{to its right} is convex.

Similarly, a de Sitter polygon separates $\dS^2$ into two cylindrical domains, and the polygon is convex if and only if the domain \emph{in its past} is convex.

\subsection{Equilateral polygons}

\begin{defi}
A spherical or spacelike de Sitter $k$-gon $p$ is called \emph{equilateral} if
\[l_1(p)= \ldots = l_k(p)~.\]
\end{defi}

Let $\mathcal P_k^{\textit{eq}}(\bS)\subset \mathcal P_k(\bS)$ denote the set of (equivalence classes of) equilateral spherical  $k$-gons and $\mathcal P_k^{l}(\bS)\subset \mathcal P_k^{\textit{eq}}(\bS)$ the subset of equilateral $k$-gons $p$with $l_i(p) = l$ for all $i$.

\begin{prop} \label{prop:EquilateralPolygons}
The space $\mathcal P_k^{\textit{eq}}(\bS)$ is a submanifold of $\mathcal P_k(\bS)$ of dimension $k-2$. 

For all $l < \frac{2\pi}{k}$, the subpace $P_k^{l}(\bS)$ is a submanifold of $\mathcal P_k(\bS)$ of dimension $k-3$.
\end{prop}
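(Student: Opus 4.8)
The plan is to present both subsets as level sets of length maps and apply the regular value theorem, using the identification of $T_p\mathcal P_k(\bS)$ furnished by Theorem~\ref{tm:polygons}. I identify this tangent space with the space of $(\dot l,\dot\theta)\in\R^{2k}$ satisfying Equation~\eqref{eq:variation polygons S2}, so that $\mathrm d l_i$ and $\mathrm d\theta_i$ are the coordinate projections. Everything rests on one computation: the image of the differential of the total length map $L=(l_1,\dots,l_k)\colon\mathcal P_k(\bS)\to(0,\pi)^k$. Projecting Equation~\eqref{eq:variation polygons S2} onto the $\dot l$-coordinates, this image is $\{\dot l\in\R^k:\sum_i\dot l_i\,w_i\in W\}$, where $W=\mathrm{span}(v_1,\dots,v_k)$. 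If $W=\R^3$ --- i.e. the vertices do not all lie on one great circle --- the constraint is empty and $\mathrm dL_p$ is onto $\R^k$. If $\dim W=2$ (a ``flat'' polygon), then all $v_i$ and all edge directions lie in $W$, so every $w_i=v_i\times u_i^+$ equals the constant unit normal $n$ to $W$; the constraint becomes $\sum_i\dot l_i=0$ and $\mathrm{Im}(\mathrm dL_p)$ is this hyperplane.

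For the first claim I would set $F=(l_1-l_k,\dots,l_{k-1}-l_k)$, so that $\mathcal P_k^{\textit{eq}}(\bS)=F^{-1}(0)$ and $\mathrm dF_p=\pi\circ\mathrm dL_p$ with $\ker\pi=\R(1,\dots,1)$. Then $\mathrm dF_p$ is onto precisely when $\mathrm{Im}(\mathrm dL_p)+\R(1,\dots,1)=\R^k$, which holds in both cases above: trivially when $W=\R^3$, and in the flat case because the hyperplane $\{\sum\dot l_i=0\}$ does not contain $(1,\dots,1)$. Hence $0$ is a regular value and $\mathcal P_k^{\textit{eq}}(\bS)$ is a submanifold of codimension $k-1$, of dimension $(2k-3)-(k-1)=k-2$.

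For the second claim I would write $P_k^l(\bS)=L^{-1}(l,\dots,l)$ and show that $(l,\dots,l)$ is a regular value when $l<\frac{2\pi}{k}$. The point is that a flat polygon has all $\theta_i=0$ (a direction reversal would force $\theta_i=\pm\pi$, which is excluded), hence traverses its great circle exactly once, so $\sum_i l_i=2\pi$; an equilateral flat $k$-gon thus has side length exactly $\frac{2\pi}{k}$. Therefore, for $l<\frac{2\pi}{k}$ no polygon in $P_k^l(\bS)$ is flat, every such $p$ satisfies $W=\R^3$, and $\mathrm dL_p$ is surjective. The level set $P_k^l(\bS)$ is then a submanifold of codimension $k$ and dimension $k-3$; it is moreover non-empty, since the regular $k$-gons inscribed in the circles of latitude realize every side length in $(0,\frac{2\pi}{k})$.

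The one genuinely delicate point is the flat polygons, where $\mathrm dL_p$ drops rank. For the first statement one must check that the length-\emph{difference} map is still a submersion there, which comes down to the transversality of the hyperplane $\{\sum\dot l_i=0\}$ to the diagonal; for the second, one must recognize that flat equilateral polygons occur exactly at $l=\frac{2\pi}{k}$, which is what pins down the hypothesis $l<\frac{2\pi}{k}$.
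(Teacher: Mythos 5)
Your proof is correct and follows essentially the same route as the paper: both realize the two subsets as level sets of length(-difference) maps, use Theorem~\ref{tm:polygons} to identify the image of $\mathrm d L$, and isolate the equatorial (flat) polygons as the only points where $\mathrm d L$ drops rank, which forces $kl = 2\pi$ and hence the restriction $l < \frac{2\pi}{k}$. Your transversality observation $\mathrm{Im}(\mathrm d L_p)+\R(1,\ldots,1)=\R^k$ in the flat case is a coordinate-free rephrasing of the paper's explicit construction of preimages under $\mathrm d D$ (with $\sum_i \dot l_i = 0$), so the two arguments coincide in substance.
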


\begin{proof}
Define
\[\begin{array}{crcl}
L: &\mathcal P_k(\bS) & \to & \R^{k}\\
& p & \to &(l_1(p), \ldots l_k(p))
\end{array}\]
and 
\[\begin{array}{crcl}
D: &\mathcal P_k(\bS) & \to & \R^{k-1}\\
& p & \to &(l_2(p)-l_1(p), \ldots, l_k(p)-l_{k-1}(p))~,
\end{array}\]
so that $\mathcal P_k^l(\bS)= L^{-1}(l,\ldots,l)$ and $\mathcal P_k^{\textit{eq}}(\bS) = D^{-1}(0,\ldots, 0)$. 

Let $p$ be an equilateral polygon of length $l$. By Theorem \ref{tm:polygons}, $(\dot l_1, \ldots, \dot l_k)$ belongs to the image of $\mathrm d L$ if and only if there exist $(\dot \theta_i)_{1\leq i \leq k}$ such that 
\[\sum_{i=1}^k \dot \theta_i v_i = \sum_{i=1}^k \dot l_i w_i~.\]
This is the case as long as the $v_i$ span $\R^3$. Otherwise, all the $v_i$ are aligned along an equator, hence $kl = 2\pi$. We conclude that $L$ is a submersion along $\mathcal P_k^l(\bS)$ for $l< \frac{2\pi}{k}$ and the second part of the theorem follows.

We also deduce that $D$ is a submersion at $p$ unless $p$ is contained in an equator. Assume now that $p$ is contained in an equator. Then all the $w_i$ are equal to a fixed unit vector $w$ orthogonal to this equator. Fix $(\delta_i)_{1\leq i \leq k-1} \in \R^{k-1}$ and set $\dot \theta_i =0$ and $\dot l_i = \sum_{j=1}^{i-1} \delta_j - s$, where
\[s= \frac{1}{k}\sum_{i=1}^k \sum_{j=1}^{i-1} \delta_j~.\]
Then we have
\[\dot l_{i+1} - \dot l_i = \delta_i\]
and
\[\sum_{i=1}^k \dot \theta_i v_i - \sum_{i=1}^k \dot l_i w_i = - \sum_{i=1}^k \dot l_i w = 0~. \]
Hence $(\delta_1, \ldots, \delta_{k-1})$ belongs to the image of $\mathrm d D$. We conclude that $D$ is a submersion along $\mathcal P_k^{\textit{eq}}(\bS)$ and the first part of the theorem follows.
\end{proof}

Similarly, denoting by $\mathcal P_k^{\textit{eq}}(\dS)\subset \mathcal P_k(\dS)$ the set of (equivalence classes of) equilateral spacelike de Sitter $k$-gons and $\mathcal P_k^{l}(\dS) \subset \mathcal P_k^{\textit{eq}}(\dS)$ the subset of equilateral $k$-gons of length $l$, we have

\begin{prop}  \label{prop:EquilateralPolygons-dS}
The space $\mathcal P_k^{\textit{eq}}(\dS)$ is a submanifold of $\mathcal P_k(\dS)$ of dimension $k-2$. 

For all $l > \frac{2\pi}{k}$, the subpace $P_k^{l}(\dS)$ is a submanifold of $\mathcal P_k(\dS)$ of dimension $k-3$.
\end{prop}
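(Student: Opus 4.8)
The plan is to reproduce, almost verbatim, the proof of Proposition~\ref{prop:EquilateralPolygons}, replacing the role of the equator by that of a closed spacelike geodesic of $\dS^2$ and keeping track of the sign coming from $\langle w_i, w_i\rangle = -1$. First I would introduce the two auxiliary maps
\[
L\colon \mathcal P_k(\dS) \to \R^k,\qquad p \mapsto (l_1(p),\ldots,l_k(p)),
\]
\[
D\colon \mathcal P_k(\dS) \to \R^{k-1},\qquad p \mapsto (l_2(p)-l_1(p),\ldots,l_k(p)-l_{k-1}(p)),
\]
so that $\mathcal P_k^{l}(\dS) = L^{-1}(l,\ldots,l)$ and $\mathcal P_k^{\textit{eq}}(\dS) = D^{-1}(0,\ldots,0)$. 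Since $D$ is the composition of $L$ with the surjective linear ``difference'' map $\R^k \to \R^{k-1}$, the map $D$ is a submersion wherever $L$ is, and both statements reduce to locating the points where $L$ is a submersion.

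By Theorem~\ref{tm:polygons-ds}, a tuple $(\dot l_1,\ldots,\dot l_k)$ lies in the image of $\mathrm d L$ at $p$ if and only if there exist $(\dot\theta_i)$ with $\sum_i \dot\theta_i v_i = -\sum_i \dot l_i w_i$, using Equation~\eqref{eq:polygons-ds}. This is solvable for every $(\dot l_i)$ exactly when the $v_i$ span $\R^{2,1}$, so the only obstruction to $L$ (hence $D$) being a submersion is the failure of the $v_i$ to span. The geometric heart of the argument is then the analysis of this degenerate locus: if the $v_i$ do not span, they lie in a common linear plane $P$, so every edge lies on the geodesic $\dS^2 \cap P$; since the edges are spacelike, $P$ must have signature $(+,+)$ and $\dS^2\cap P$ is a \emph{closed spacelike geodesic}. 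All such geodesics have length $2\pi$ (the group $\SO_\circ(2,1)$ acts transitively on positive-definite planes), and an embedded polygon representing the positive generator of $\mathrm H_1(\dS^2)$ must traverse it exactly once; hence an equilateral polygon on it satisfies $kl = 2\pi$. For $l > \tfrac{2\pi}{k}$ this is impossible, so the $v_i$ span $\R^{2,1}$, $L$ is a submersion along $\mathcal P_k^l(\dS)$, and its fibers have dimension $(2k-3)-k = k-3$, proving the second assertion.

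For $\mathcal P_k^{\textit{eq}}(\dS)$, the map $D$ is a submersion away from the spacelike-geodesic locus, and it remains to handle the one equilateral configuration that does lie on such a geodesic, namely the regular polygon inscribed on the waist with $l = \tfrac{2\pi}{k}$. There all the $w_i$ coincide with the fixed future-pointing unit normal $w$ to the geodesic (a timelike vector with $\langle w,w\rangle = -1$), so constraint~\eqref{eq:polygons-ds} with $\dot\theta_i = 0$ collapses to $\bigl(\sum_i \dot l_i\bigr)\,w = 0$, i.e.\ $\sum_i \dot l_i = 0$ --- formally identical to the spherical case. The very same choice $\dot\theta_i = 0$, $\dot l_i = \sum_{j=1}^{i-1}\delta_j - s$ with $s$ the average then realizes any prescribed $(\delta_i)=(\dot l_{i+1}-\dot l_i)$ in the image of $\mathrm d D$, showing $D$ is a submersion along $\mathcal P_k^{\textit{eq}}(\dS)$, with fibers of dimension $(2k-3)-(k-1) = k-2$.

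The main obstacle I anticipate is purely the geometric step, not the linear algebra: correctly identifying the non-spanning locus with the closed spacelike geodesics, verifying that they have length $2\pi$, and pinning down $kl = 2\pi$ from embeddedness and the homology condition. This is also where the inequality reverses relative to the spherical statement: pushing the waist-inscribed polygon into timelike directions \emph{increases} the edge lengths (a short computation gives $\cos l = \cos\tfrac{2\pi}{k} + \sinh^2\!\tau\,(\cos\tfrac{2\pi}{k}-1) \le \cos\tfrac{2\pi}{k}$, hence $l \ge \tfrac{2\pi}{k}$), so the relevant de Sitter polygons satisfy $l > \tfrac{2\pi}{k}$, mirroring the condition $l < \tfrac{2\pi}{k}$ of the spherical case.
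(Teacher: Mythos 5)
Your proposal is correct and follows essentially the same route as the paper, which simply declares the proof ``identical to that of Proposition~\ref{prop:EquilateralPolygons}'': the same maps $L$ and $D$, the same characterization of $\operatorname{im} \mathrm{d}L$ via Theorem~\ref{tm:polygons-ds}, and the same explicit construction of $\dot l_i$ at the degenerate configuration. Your fleshing-out of the de Sitter specifics --- that the non-spanning locus consists of polygons inscribed in a closed spacelike geodesic of length $2\pi$ (forcing $kl=2\pi$), and the sign bookkeeping from $\langle w_i,w_i\rangle=-1$ --- is exactly what the paper leaves implicit, and is accurate.
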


The proof is identical to that of Proposition \ref{prop:EquilateralPolygons}.\\

The following proposition guarantees in particular the existence of convex equilateral polygons with any lengths in the appropriated range.

\begin{prop} \label{prop: Existence polygons prescribed lengths}
There exists a smooth $1$-parameter family of spherical $k$-gons $(p_l)_{l \in (0,\frac{2\pi}{k}]}$ such that $p_l$ is convex equilateral of length $l$ and $\theta_i(p) = \theta(l)$ for some homeomorphism
\[\theta: \left(0,\frac{2\pi}{k}\right] \to \left(\left(1-\frac{2}{k}\right)\pi, \pi\right]~.\]

There exists a smooth $1$-parameter family of spacelike de Sitter $k$-gons $(p_l)_{l \in \R_{\geq 0}}$ such that $p_l$ is convex equilateral of length $l$ and $\theta_i(p) = \theta(l)$ for some homeomorphism
\[\theta: \R_{\geq 0} \to \R_{\geq 0}~.\]
\end{prop}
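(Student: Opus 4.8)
The plan is to realize each $p_l$ as a \emph{regular} polygon: the orbit of a single vertex $v_1$ under a cyclic group of order $k$ of orientation-preserving isometries (also preserving the orientations of time and space in the de Sitter case). On $\bS^2$ this group is generated by the rotation $R$ of angle $\frac{2\pi}{k}$ about a fixed axis; on $\dS^2$ by the elliptic rotation of angle $\frac{2\pi}{k}$ fixing a timelike axis $e\in\R^{2,1}$, which is the only kind of element of $\SO_\circ(2,1)$ admitting a $\frac{2\pi}{k}$-periodic orbit. Setting $v_j = R^{\,j-1}v_1$, the tuple $(v_1,\dots,v_k)$ is automatically equilateral, and since $R$ carries the adapted frame $(v_j,u_j^+,w_j)$ to $(v_{j+1},u_{j+1}^+,w_{j+1})$, all the angles $\theta_i$ coincide with a single value $\theta$. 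Equilaterality and equiangularity being thus built into the construction, it remains only to check that each orbit is an embedded convex polygon and to analyse the two scalar functions $l$ and $\theta$ of the single free parameter, namely the position of $v_1$.

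On the sphere I take $v_1$ at colatitude $\phi\in(0,\frac{\pi}{2}]$, so that the $v_j$ are the vertices of a regular Euclidean $k$-gon inscribed in the latitude circle at height $\cos\phi$, oriented so that the cap on its right is convex. Then $\cos l(\phi)=\cos^2\phi+\sin^2\phi\,\cos(\frac{2\pi}{k})$ is a smooth strictly increasing bijection $(0,\frac{\pi}{2}]\to(0,\frac{2\pi}{k}]$, so that $l$ itself may serve as parameter. Computing the common angle from its defining relation, one checks that $\theta$ is strictly monotone in $l$ with limits fixed by the two degenerations of the family: as $l\to0$ the polygon shrinks to a Euclidean regular $k$-gon and $\theta\to(1-\frac{2}{k})\pi$, while at $l=\frac{2\pi}{k}$ the vertices become equidistributed on a great circle and $\theta=\pi$. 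Since $\theta$ stays positive each $p_l$ is convex, and we obtain $\theta$ as a homeomorphism $(0,\frac{2\pi}{k}]\to\bigl((1-\frac{2}{k})\pi,\pi\bigr]$, which proves the first statement.

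The de Sitter construction is formally identical, with the elliptic rotation fixing $e$ in the role of $R$ and the hyperbolic angle $\theta\in\R$ in the role of the spherical one; here the relation $-u_i^-=\cosh(\theta_i)u_i^++\sinh(\theta_i)w_i$ makes $\theta_i=0$ precisely when the three consecutive vertices are aligned. Parametrizing the orbit circle orthogonal to $e$ by a height $t\geq0$, the vertices at $t=0$ lie on the spacelike geodesic $e^\perp\cap\dS^2$, so that $p_0$ is the equipartitioned closed geodesic and $\theta=0$; as $t$ grows the polygon bows into the convex domain in its past, and the explicit expression for $\cosh\theta$ shows that $\theta$ increases strictly and diverges to $+\infty$. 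Thus the common angle depends smoothly and strictly monotonically on the parameter and realizes every value in $\R_{\geq0}$, the edge length being a smooth increasing function of it; since $\theta\geq0$ the polygons are convex, and parametrizing the family so that $\theta$ sweeps out $\R_{\geq0}$ yields the second statement, with $\theta$ a homeomorphism onto $\R_{\geq0}$.

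Smoothness of each family and the reduction to a single parameter are immediate from the symmetry, so the real work lies in the scalar analysis: showing that the common angle is a strictly monotone function of the parameter with exactly the asserted endpoints, and in particular the divergence $\theta\to+\infty$ in the de Sitter case, which is what forces the range to be all of $\R_{\geq0}$. I expect this monotonicity — together with the routine check that the symmetric tuples are embedded and crossing-free for every value of the parameter — to be the only genuine obstacle; both reduce to one-variable estimates on the explicit formulae for $\cos l$ and $\cos\theta$ (respectively $\cosh\theta$) indicated above.
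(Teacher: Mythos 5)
Your proposal is correct and takes essentially the same route as the paper: the paper's proof likewise considers the rotationally symmetric polygons obtained as the orbit of a single vertex under the rotation of angle $\frac{2\pi}{k}$ about a fixed axis (timelike in the de Sitter case), observes that the symmetry forces them to be equilateral and equiangular, and then asserts, exactly as you do, that the one-variable maps $\alpha \mapsto l(p_\alpha)$ and $\alpha \mapsto \theta(p_\alpha)$ are homeomorphisms onto the appropriate intervals, leaving this last monotonicity/limit computation as a straightforward check. One caveat, shared by the paper's own statement and hence not counted against you: your spherical limit values ($\theta \to (1-\frac{2}{k})\pi$ as $l \to 0$ and $\theta = \pi$ on the great circle) implicitly read $\theta$ as the \emph{interior} angle, whereas under the paper's definition of $\theta_i$ (which vanishes precisely when consecutive vertices are aligned --- the convention you yourself correctly invoke in your de Sitter paragraph) the great-circle polygon has $\theta = 0$ and the common angle would sweep $\left[0, \frac{2\pi}{k}\right)$ instead.
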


\begin{proof}
In the spherical case, fix an orthogonal basis and set $p_\alpha = (v_j(\alpha))_{1\leq j \leq k}$ where 
\[v_j(\alpha) = \left (\cos(\alpha) \cos\left(\frac{2\pi j}{k}\right), \cos (\alpha) \sin\left(\frac{2\pi j}{k}\right), \sin(\alpha)\right)~.\]

In the de Sitter case, fix an orthogonal basis and set 
$p_\alpha = (v_j(\alpha))_{1\leq j \leq k}$ where 
\[v_j(\alpha) = \left(\cosh(\alpha) \cos\left(\frac{2\pi j}{k}\right), \cosh (\alpha) \sin\left(\frac{2\pi j}{k}\right), \sinh(\alpha)\right)~.\]

The polygon $p_\alpha$ is symmetric under rotation of angle $\frac{2\pi}{k}$, hence it is equilateral with lengths $l(\alpha)$ and all angles equal to $\theta(\alpha)$. A straightforward computation shows that the maps
\[\alpha\mapsto l(p_\alpha)\quad \textrm{and} \quad \alpha \mapsto \theta(\alpha)\]
are both homeomorphisms between the appropriate intervals.
\end{proof}

\subsection{Equilateral polygons with a central symmetry}

Consider the involution $\sigma$ of $\mathcal U_{2k}(\bS)$ (respectively, of $\mathcal U_{2k}(\dS)$) given by
\[\sigma (v_1,\ldots ,v_{2k}) = (v_{k+1}, \ldots, v_{2k}, v_1,\ldots, v_k)~.\]
The involution commutes with the action of $\SO(3)$ (resp. $\SO_\circ(2,1)$) and thus factors to an involution of $\mathcal P_{2k}(\bS)$ (resp. $\mathcal P_{2k}(\dS)$) that we still denote by $\sigma$. Since two polygons with the same lengths and angles are congruent, the following properties are equivalent:
\begin{itemize}
\item the class of $p\in \mathcal P_{2k}(\bS)$ (resp. $\mathcal P_{2k}(\dS)$) is fixed by $\sigma$,
\item there exists a unit vector $w$ (resp. a unit negative vector $w$) such that
\[\sigma p = s_w p~,\]
where $s_w$ is the central symmetry with axis $w$,
\item $l_i(p)=l_{i+k}(p)$ and $\theta_i(p) = \theta_{i+k}(p)$ for all $i \in \Z/2k\Z$.
\end{itemize}

Note that $\sigma$ preserves the space of equilateral polygons. We denote by $\mathcal P_{2k}^{\textit{sym}}(\bS)$ (resp. $\mathcal P_{2k}^{\textit{sym}}(\dS)$) the moduli space of equilateral $2k$-gons with a central symmetry.

\begin{prop} \label{p:Dimension symmetric polygons}
The set $\mathcal P_{2k}^{\textit{sym}}(\bS)$ (resp. $\mathcal P_{2k}^{\textit{sym}}(\dS)$) is a submanifold of $\mathcal P_{2k}^{\textit{eq}}(\bS)$ (resp. $\mathcal P_{2k}^{\textit{eq}}(\dS)$) of dimension $k$.
\end{prop}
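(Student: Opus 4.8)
The plan is to realize $\mathcal P_{2k}^{\textit{sym}}(\bS)$ as the fixed-point set of the involution $\sigma$ restricted to the manifold $\mathcal P_{2k}^{\textit{eq}}(\bS)$, and to appeal to the classical fact that the fixed-point set of a smooth involution of a manifold is a smooth closed submanifold, whose tangent space at a fixed point $p$ is the $(+1)$-eigenspace of $\mathrm d\sigma_p$. (One averages a Riemannian metric over $\{\mathrm{Id},\sigma\}$ to make $\sigma$ an isometry; the fixed set of an isometric involution is totally geodesic and, via $\exp_p$, is locally identified with the linear subspace $\ker(\mathrm d\sigma_p - \mathrm{Id})$.) Since $\sigma$ preserves $\mathcal P_{2k}^{\textit{eq}}(\bS)$, and the three equivalent conditions stated above identify $\mathcal P_{2k}^{\textit{sym}}(\bS)$ with $\mathrm{Fix}\bigl(\sigma|_{\mathcal P_{2k}^{\textit{eq}}(\bS)}\bigr)$, it only remains to show that this $(+1)$-eigenspace has dimension exactly $k$ at every fixed point $p$.

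To compute this eigenspace I would work in the coordinates provided by the embedding $\Phi$ of Theorem \ref{tm:polygons}, identifying $T_p\mathcal P_{2k}(\bS)$ with the space of solutions of \eqref{eq:variation polygons S2}. In these coordinates $\sigma$ is the cyclic shift $(\dot l_i,\dot\theta_i)\mapsto(\dot l_{i+k},\dot\theta_{i+k})$, so its $(+1)$-eigenspace inside $T_p\mathcal P_{2k}^{\textit{eq}}(\bS)$ consists of tuples with $\dot l_i=\dot l_{i+k}$ and $\dot\theta_i=\dot\theta_{i+k}$; as the polygon is equilateral, all the $\dot l_i$ moreover coincide with a single value $\lambda$, so the eigenspace is parametrized by $(\lambda,\dot\theta_1,\dots,\dot\theta_k)\in\R^{k+1}$ subject to the constraint \eqref{eq:variation polygons S2}. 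Writing $w$ for the axis of the central symmetry, so that $v_{i+k}=s_w v_i$ and (since $s_w\in\SO(3)$ preserves cross products) $w_{i+k}=s_w w_i$, one gets $v_i+v_{i+k}=2\langle v_i,w\rangle w$ and $w_i+w_{i+k}=2\langle w_i,w\rangle w$. Substituting these into \eqref{eq:variation polygons S2} turns every term into a multiple of $w$, so the vector constraint collapses to the single scalar equation
\[\sum_{i=1}^k \dot\theta_i\,\langle v_i,w\rangle \;=\; \lambda\sum_{i=1}^k \langle w_i,w\rangle~.\]
Provided this linear equation is non-trivial, it cuts the dimension from $k+1$ down to $k$, as desired.

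The main obstacle is therefore to rule out that all the coefficients of this equation vanish at once, and I would argue by dichotomy. If some vertex does not lie on the great circle $w^\perp$, then the corresponding coefficient $\langle v_i,w\rangle$ is non-zero and the equation is non-trivial. Otherwise every $v_i$ lies on $w^\perp$; then $p$ is an equilateral polygon inscribed in this great circle (of length $l=\pi/k$, with all $\theta_i=0$), each edge is an arc of $w^\perp$, and $w_i=v_i\times u_i^+$ equals $\pm w$ with a sign that is constant along the cyclically ordered, non-crossing polygon, so $\sum_{i=1}^k\langle w_i,w\rangle=\pm k\neq 0$ and the equation is again non-trivial. In both cases the $(+1)$-eigenspace has dimension $k$, which establishes the claim in the spherical case.

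The de Sitter case is formally identical, using Theorem \ref{tm:polygons-ds} in place of Theorem \ref{tm:polygons}: the axis $w$ is now a unit negative vector, the relations $v_{i+k}=s_w v_i$ and $w_{i+k}=s_w w_i$ collapse the constraint \eqref{eq:polygons-ds} to a single scalar equation of the same shape (up to the signs coming from $\langle w,w\rangle=-1$), and the same non-degeneracy dichotomy, with the great circle $w^\perp$ replaced by the closed spacelike geodesic $\dS^2\cap w^\perp$, shows that the $(+1)$-eigenspace has dimension $k$.
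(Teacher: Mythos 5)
Your proposal is correct, and it shares the paper's overall framework: identify $\mathcal P_{2k}^{\textit{sym}}$ with the fixed locus of $\sigma$ on $\mathcal P_{2k}^{\textit{eq}}$, use linearizability of the involution to reduce to an eigenspace computation for $\mathrm d\sigma$, and carry out that computation in the length--angle coordinates of Theorem \ref{tm:polygons}. But the linear-algebra core is genuinely different, in a dual way. The paper computes the $(-1)$-eigenspace: anti-invariant variations have the form $(\dot\theta_1,\ldots,\dot\theta_k,-\dot\theta_1,\ldots,-\dot\theta_k,\dot l=0)$ subject to $\sum_{i=1}^k \dot\theta_i (v_i - s_w v_i)=0$, and the key point is that the vectors $v_i - s_w v_i$ span the plane $w^\perp$; this gives $\dim\ker(\mathrm d\sigma+\Id)=k-2$, whence $\dim\ker(\mathrm d\sigma-\Id)=(2k-2)-(k-2)=k$ by complementarity. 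You instead compute the $(+1)$-eigenspace directly, observing that the symmetry collapses the vector constraint \eqref{eq:variation polygons S2} onto the line $\R w$, leaving a single scalar equation in $(\lambda,\dot\theta_1,\ldots,\dot\theta_k)$. Interestingly, the two routes require excluding different degeneracies. The paper must rule out that all $v_i$ lie on a great circle \emph{through} $w$, and does so by an orientation argument ($s_w$ reverses the orientation of such a circle while $\sigma$ preserves it); that configuration in fact never occurs. In your approach, the potentially degenerate configuration — all $v_i$ on the great circle $w^\perp$ — genuinely occurs: it is precisely the polygon $p_0$ with vanishing angles used later in Proposition \ref{prop: angles symmetric polygons dS}. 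Your equation survives there only thanks to the coefficient $\sum_{i=1}^k\langle w_i,w\rangle=\pm k\neq 0$, and your justification (for a non-crossing polygon whose edges all lie in the circle, the normals $w_i=\pm w$ have constant sign because the embedded polygon traverses the circle monotonically) is the correct substitute for the paper's orientation argument. The two proofs cost about the same; yours has the small added benefit of exhibiting $T_p\mathcal P_{2k}^{\textit{sym}}$ explicitly as a hyperplane in the $(\lambda,\dot\theta)$-coordinates — essentially the computation that the proof of Proposition \ref{prop: angles symmetric polygons dS} redoes at $p_0$ — while the paper's sidesteps any case analysis at that special polygon.
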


\begin{proof}
We do the proof in the spherical setting, but the proof in the de Sitter setting is identical. 

Since $\sigma$ is a smooth diffeomorphism of $\mathcal P_{2k}^{\textit{eq}}(\bS)$ of finite order, it is linearizable at every fixed point and its fixed locus is a submanifold of local dimension 
\[\dim \ker (\mathrm d \sigma -\Id)~.\]

Let $p = (v_1,\ldots, v_{2k})$ be an equilateral polygon with a central symmetry and $w$ such that 
\[v_{i+k} = s_w(v_i)~.\]
Then $\sigma$ fixes the isomorphism class of $p$ and the action of $\mathrm d_p \sigma$ on $T_p \mathcal \mathcal P_{2k}^{\textit{eq}}(\bS)$ sends a first order variation of the length and angles $(\dot \theta_1, \ldots, \dot \theta_{2k}, \dot l)$ to
\[(\dot \theta_{k+1}, \ldots \dot \theta_{2k}, \dot \theta_1, \ldots, \dot \theta_k, \dot l)~.\]
By Theorem \ref{tm:polygons}, the kernel of $\mathrm d_p \sigma + \Id$ is identified with the set of tuples
\[(\dot \theta_1, \ldots, \dot \theta_k, - \dot \theta_1, \ldots, -\dot \theta_k, 0)\]
satisfying the relation
\begin{equation}\label{eq: ker sigma + Id}
\sum_{i=1}^k \dot \theta_i (v_i - s_w v_i) = 0~.
\end{equation}
Note that the vectors $v_i -s_w v_i$ are all orthogonal to $w$. Moreover, they span $w^\perp$, for otherwise all the $v_i$ would be contained in a great circle passing through $w$, which is absurd because $s_w$ reverses the orientation of this circle while $\sigma$ preserves the orientation.

Hence the set of $(\dot \theta_i)_{1\leq i \leq k}$ satisfying \eqref{eq: ker sigma + Id} has dimension $k-2$. Since $T_p \mathcal \mathcal P_{2k}^{\textit{eq}}(\bS)$ has dimension $2k-2$, we deduce that
\[\dim \ker ( \mathrm d \sigma -\Id) = \dim \mathcal P_{2k}^{\textit{sym}}(\bS) = k~.\]
\end{proof}

Recall that there is (up to isometry) a unique equilateral spherical (resp.  spacelike de Sitter) $2k$-gon with vanishing angles. This polygon is contained in a (spacelike) geodesic, and its edges divide this geodesic in segments of length $\frac{\pi}{k}$. In particular, it has a central symmetry. We denote this polygon by $p_0$.

\begin{prop} \label{prop: angles symmetric polygons dS}
There exists a neighbourhood $\mathcal V$ of $p_0$ in $\mathcal P_{2k}^\textit{sym}(\dS)$ (resp. $\mathcal P_{2k}^\textit{sym}(\bS)$) such that the map 
\[\begin{array}{crcl}
\Theta: &\mathcal V & \to & \R^{k}\\
& p & \to &(\theta_1(p), \ldots \theta_k(p))
\end{array}\]
is a diffeomorphism onto an open neighbourhood of $(0,\ldots, 0)$.
\end{prop}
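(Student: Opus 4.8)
The plan is to apply the inverse function theorem at $p_0$. By Proposition \ref{p:Dimension symmetric polygons} the source $\mathcal P_{2k}^{\textit{sym}}(\dS)$ (resp. $\mathcal P_{2k}^{\textit{sym}}(\bS)$) is a manifold of dimension $k$, the target $\R^k$ also has dimension $k$, and $\Theta(p_0)=(0,\ldots,0)$ since all the angles of $p_0$ vanish. It therefore suffices to show that the differential $\mathrm d_{p_0}\Theta$ is a linear isomorphism; one then shrinks $\mathcal V$ so that $\Theta$ restricts to a diffeomorphism onto its image, which is an open neighbourhood of the origin.

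First I would make the tangent space $T_{p_0}\mathcal P_{2k}^{\textit{sym}}$ explicit. Combining the characterisation of $\mathrm d\Phi$ from Theorem \ref{tm:polygons-ds} (resp. Theorem \ref{tm:polygons}) with the description of the $\sigma$-fixed locus used in the proof of Proposition \ref{p:Dimension symmetric polygons}, a tangent vector at a symmetric equilateral polygon $p$ is a tuple $(\dot\theta_1,\ldots,\dot\theta_k,\dot l)$, where the angle variations are repeated, $\dot\theta_{i+k}=\dot\theta_i$, and all the length variations equal the single value $\dot l$, subject to the linear constraint
\[\sum_{i=1}^{2k}\dot\theta_i\,v_i + \dot l\sum_{i=1}^{2k} w_i = 0\]
(the sign in front of $\dot l$ is reversed in the spherical case, which is irrelevant below). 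In these coordinates $\mathrm d_p\Theta$ is simply the projection $(\dot\theta_1,\ldots,\dot\theta_k,\dot l)\mapsto(\dot\theta_1,\ldots,\dot\theta_k)$.

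The heart of the matter is to evaluate this constraint at $p_0$. Recall that $p_0$ is the $\alpha=0$ member of the family constructed in Proposition \ref{prop: Existence polygons prescribed lengths} (with $k$ replaced by $2k$): its vertices are $v_j=(\cos(\pi j/k),\sin(\pi j/k),0)$, contained in a single (spacelike) geodesic circle, and the symmetry axis is $w=(0,0,1)$. Two elementary observations then make everything collapse: the central symmetry reads $v_{i+k}=-v_i$, so $v_i+v_{i+k}=0$ for every $i$; and because every edge of $p_0$ is tangent to this circle, the oriented frames $(v_i,u_i^+,w_i)$ all share the same third vector, $w_1=\cdots=w_{2k}=w$. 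Substituting $\dot\theta_{i+k}=\dot\theta_i$ and grouping antipodal pairs, the constraint reduces to
\[\sum_{i=1}^{k}\dot\theta_i\,(v_i+v_{i+k}) + 2k\,\dot l\,w = 2k\,\dot l\,w = 0~,\]
which forces $\dot l=0$ while leaving $\dot\theta_1,\ldots,\dot\theta_k$ free. Hence $T_{p_0}\mathcal P_{2k}^{\textit{sym}}$ is exactly the set of tuples $(\dot\theta_1,\ldots,\dot\theta_k,0)$, the differential $\mathrm d_{p_0}\Theta$ is the identity projection onto $\R^k$, and the inverse function theorem gives the conclusion.

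The only genuinely delicate point is the computation at $p_0$: one must verify that all the $w_i$ coincide (this uses crucially that $p_0$ lies in a single geodesic) and that the central symmetry is the antipodal map $v\mapsto -v$ in the plane of the circle. Once these are established the length direction decouples from the constraint and the rest is immediate. I expect no additional difficulty specific to the Lorentzian setting, since the signature only flips the sign of the $\dot l$-term, which does not affect the conclusion $\dot l=0$; the argument is thus uniform in the spherical and de Sitter cases.
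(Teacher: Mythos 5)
Your proof is correct and takes essentially the same approach as the paper: both reduce, via Proposition \ref{p:Dimension symmetric polygons} and the inverse function theorem, to showing $\mathrm d_{p_0}\Theta$ is bijective, using the tangent-space constraint from Theorems \ref{tm:polygons} and \ref{tm:polygons-ds} together with the key observation that all the $w_i$ coincide at $p_0$. The only (harmless) difference is that you verify surjectivity explicitly by cancelling the antipodal pairs $v_i + v_{i+k} = 0$, whereas the paper only checks injectivity (an element of $\ker \mathrm d_{p_0}\Theta$ has all $\dot\theta_i = 0$, so the constraint forces $\dot l\, \bigl(\sum_{i=1}^{2k} w_i\bigr) = 0$, hence $\dot l = 0$) and obtains bijectivity from the equality of dimensions.
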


\begin{proof}
The proposition states that $\Theta$ is a local diffeomorphism at $p_0$. The proof is identical in the spherical and de Sitter case.
By Proposition \ref{p:Dimension symmetric polygons}, the space $\mathcal P_{2k}^{\textit{sym}}(\dS)$ (resp. $\mathcal P_{2k}^{\textit{sym}}(\bS)$) has dimension $k$, so it suffices to prove that $\mathrm d \Theta$ is injective at $p_0$.

Set $p_0= (v_1,\ldots v_{2k})$. Since all the $v_i$ belong to the same geodesic, they are orthogonal to the same unit vector $w$, hence all the auxiliary vectors $w_i$ are equal to $w$. 

By Theorem \ref{tm:polygons}, the kernel of $\mathrm d_p \Theta$ identifies with the set of infinitesimal variations of angles and length of the form
\[(0,\ldots, 0, \dot l)\]
satisfying the equation
\begin{equation} \label{eq: Kernel Theta}
\dot l \left (\sum_{i=1}^{2k} w_i\right) = 0~.
\end{equation}Since all the $w_i$ are equal to $w$, \eqref{eq: Kernel Theta} implies $\dot l= 0$. We conclude that $\mathrm d_p \Theta$ is injective, hence $\Theta$ is local diffeomorphism in a neighborhood of $p_0$.
\end{proof}

\begin{remark}
The proof shows more generally that the map $\Theta$ is immersive at every polygon $p$ for which $\sum_{i=1}^{k} w_i \neq 0$. One can show that it is the case when $p$ is a convex spherical polygon and when $p$ is any spacelike de Sitter polygon.
\end{remark}

\section{Geometrization of Gromov--Thurston manifolds} 
\label{sec:geometrization}

Thanks to the results of Section \ref{sc:globally}, in order to prove Theorem \ref{tm:existence_ads}, it is enough to construct a convex ruled spacelike $\AdS$ structure on a given Gromov--Thurston manifold $M^a$. The spacelike structure we construct will be totally geodesic away from the hypersurfaces $H_i$ where the spacelike embedding is ``folded''. Along the codimension $2$ locus $S$, several dihedra with a total angle greater than $2 \pi$ are patched together. A similar construction will be made to prove Theorem \ref{tm:existence_hyp}, with the only difference that the cone angle is less than $2\pi$ along the codimension 2 stratum.

\subsection{Hipped hypersurfaces in $\AdS^{d+1}$ and polygons in $\dS^2$}

In order to obtain a local isometry from a Gromov--Thurston cone-manifold $M^{a}$ with $a\geq 1$ into $\AdS^{d+1}$, we need to understand the polyhedral hypersurfaces in $\AdS^{d+1}$ that carry the same geometry. We will see that such hypersurfaces can be parametrized by spacelike polygons in $\dS^2$.

\begin{defi} \label{def:hip}
A \emph{hipped hypersurface} in $\AdS^{d+1}$ is a Lipschitz spacelike hypersurface $H\subset \AdS^{d+1}$ that is a finite union $H=\bigcup_{i=1}^k X_i$ of subsets with the following properties:
\begin{enumerate}
\item Each $X_i$ is a convex subset of a totally geodesic copy of $\HH^d$,
\item The relative boundary of $X_i$ is the union of two half-spaces $Y_i$ and $Y_{i+1}$ of totally geodesic copies of $\HH^{d-1}$,
\item $X_i\cap X_{i+1}=Y_{i+1}$ for all $i\in \{1,\dots,k\}$ (setting $X_{k+1}=X_1$ and $Y_{k+1}=Y_1$),
\item There is a totally geodesic copy $Z\subset \AdS^{d+1}$ of $\HH^{d-2}$, called the \emph{stem} of $H$, such that $Y_i\cap Y_{i+1}=Z$ for all $i\in \{1,\dots,k\}$.
\end{enumerate}
\end{defi}

 \begin{figure}[h] 
 \includegraphics[scale=0.8]{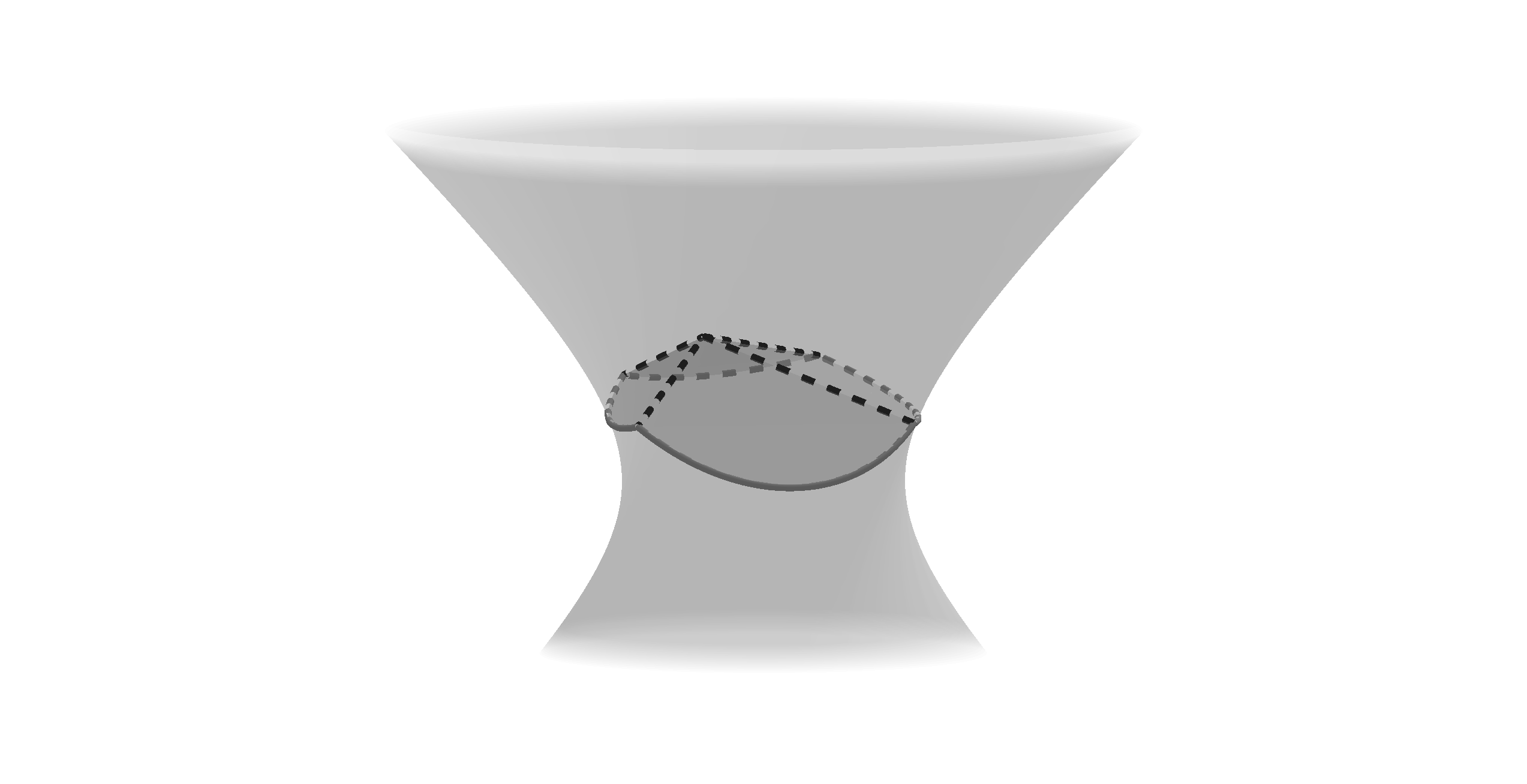}
 \caption{A hipped hypersurface in $\AdS^3$.}
 \label{fig:hipped hypersurface}
 \end{figure}

Let us give precise definitions of angles between  totally geodesic subspaces of $\AdS^{d+1}$. First, consider two totally geodesic copies $X_1,X_2\subset \AdS^{d+1}$  of $\HH^d$ intersecting along a  totally geodesic copy $Y$ of $\HH^{d-1}$. Consider an isometry $g\in \SO_\circ(d,2)$ such that $g(X_1)=X_2$ and $g$ is the identity on $Y$. Now $Y$ corresponds to a vector subspace $Y\subset \R^{d,2}$ of signature $(d-1,1)$, so $Y^\perp\subset \R^{d,2}$ is a plane of signature $(1,1)$. It follows that the restriction of $g$ to $Y^\perp$ is conjugate to an element of $\SO_\circ(1,1)\subset\SO_\circ(d,2)$, hence of the form $\begin{pmatrix} \cosh t & \sinh t\\ \sinh t & \cosh t \end{pmatrix}$. The angle between $X_1$ and $X_2$ is the real number $t$ (it could also be seen as the angle between the normal vectors to $X_1$ and $X_2$ at any point of $Y$, which is the angle between timelike vectors).

 Now consider  $Y_1,Y_2\subset \AdS^{d+1}$   half-spaces of totally geodesic copies of $\HH^{d-1}$ with common relative boundary $Z\approx \HH^{d-2}$, consider an isometry $g\in \SO_\circ(d,2)$ such that $g(Y_1)=Y_2$ and $g$ is the identity on $Z$. The angle between $Y_1$ and $Y_2$  is the unique  $\theta\in \R/2\pi\Z$ such that $g$ is conjugate in $\SO_\circ(d,2)$ to the matrix
\[ \begin{pmatrix}1_d & & \\ & \cos\theta  & -\sin\theta  \\ &\sin\theta & \cos\theta  \end{pmatrix}. \]


\begin{defi} \label{def:angleship}
Let $H=\bigcup_{i=1}^k X_i\subset \AdS^{d+1}$ be a hipped hypersurface. The \emph{dihedral angles} are the angles between $X_i$ and $X_{i+1}$, and the \emph{wedge angles} are the angles between $Y_i$ and $Y_{i+1}$.
\end{defi}

Using the exponential map of $\AdS^{d+1}$ at a point of the stem, the coordinates we obtain on  a hipped hypersurface show that it carries  a cone hyperbolic metric whose singular locus is the stem (and is totally geodesic) and whose angle is the sum of the wedge angles.

\begin{lemma} \label{lem:pastconvexhyp}
A hipped hypersurface in $\AdS^{d+1}$ is past-convex if and only if the dihedral angles are non negative.
\end{lemma}

\begin{proof}
If the angle between $X_i$ and $X_{i+1}$ is negative, consider a point $x$ in the relative interior of their common intersection $Y_{i+1}$. One can always find a spacelike geodesic in the future of $x$ that intersects both $X_i$ and $X_{i+1}$ transversally, so its intersection with the past of $H$ is disconnected.\\
Now assume that all the angles are non negative. In this case  $H$ is in the past of each spacelike hyperplane containing $X_1,\dots,X_k$, so it is past-convex.
\end{proof}

\begin{lemma} \label{lm:dSpolygonsToAdSpolyhedra}
Let $\alpha_1,\dots,\alpha_{k}\in (0,\pi)$ and $\theta_1,\dots,\theta_k\in \R$. The set of hipped hypersurfaces $H=\bigcup_{i=1}^rX_i\subset\AdS^{d+1}$ with wedge angles $\alpha_1,\dots,\alpha_k$ and dihedral angles $\theta_1,\dots,\theta_k$ considered up to isometry is in one-to-one correspondence with spacelike polygons $p\subset \dS^2$ with side lengths $\alpha_1,\dots,\alpha_k$ and angles $\theta_1,\dots,\theta_k$ up to isometry. Through this correspondence, convex polygons are associated to past-convex hypersurfaces.
\end{lemma}

\begin{proof} 
Start with a hipped hypersurface $H=\bigcup_{i=1}^r X_i\subset \AdS^{d+1}$, let $Z\approx \HH^{d-2}$ be its stem and consider the link $\mathcal L$  of $Z$. 
Fix $z\in Z$, so that $\mathcal L$ identifies with the set of unit spacelike vectors tangent to $\AdS^{d+1}$ at $z$ and orthogonal to $Z$. Notice that since $T_{z}\AdS^{d+1}=T_{z}Z \oplus (T_{z}Z)^\perp$, there is a natural identification between $\mathcal L$ and $\dS^2$ (seen as the set of unit spacelike vectors in $(T_{z}Z)^\perp$).

Intersecting $ H$ with $\mathcal L$ yields a spacelike polygon $p\subset \dS^2$. Its vertices $v_1,\dots,v_k$ are defined by  $v_i\in T_{z}Y_i\cap (T_{z}Z)^\perp$ and $\exp_z(v_i)\in Y_i$. The edges $[v_i,v_{i+1}]$ correspond to  $T_{z}X_i\cap (T_{z}Z)^\perp$.


Starting with such a polygon $p\subset \dS^2$ with vertices $v_1,\dots,v_r$, we consider any point $z\in \AdS^{d+1}$ and any totally geodesic copy $Z\subset \AdS^{d+1}$ of $\HH^{d-2}$ containing $z$.  By identifying $\dS^2$ with unit spacelike vectors in $(T_{z}Z)^\perp$, we can  define: 
\begin{align*}
Y_i&=\set{\exp_z(u+tv_i)}{u\in T_zZ \, ,~ t\geq 0}\\
X_i&=\set{\exp_z(u+tw)}{u\in T_zZ \, ,~w\in [v_i,v_{i+1}]\, ,~ t\geq 0}
\end{align*}
Then $ H=\bigcup_{i=1}^k X_i\subset\AdS^{d+1}$ is a hipped hypersurface, and these two constructions are inverse to each other.


In this correspondence, the length of $[v_i,v_{i+1}]$ is given by the angle between $Y_i$ and $Y_{i+1}$, i.e.~$\alpha_i$. The angle at $v_i$ equals the angle between $X_i$ and $X_{i+1}$, namely $\theta_i$. Following Lemma \ref{lem:pastconvexhyp}, we see that the hipped hypersurface $H$ is past-convex if and only if the spacelike polygon $p$ is convex.


\end{proof}

\subsection{Geometrization of Gromov--Thurston cone-manifolds for $a>1$}

We now consider a Gromov--Thurston manifold $M^a$ with $a=\frac{k}{n}>1$. Recall that $M^a$ is obtained by gluing $k$ ``wedges'' $V_1,\ldots ,V_{2k}$ of $\overline M$ along $S$, each making an angle $2\pi/n$ at $S$. These wedges are bounded by hypersurfaces $H_1,\dots, H_{2k}$ with boundary $S$.

We wish to construct a spacelike $\AdS$ structure on $M^a$. Since $M^a\setminus S$ is a hyperbolic manifold, the only real work consists in  constructing a  Lipschitz spacelike immersion of $M^a$ into $\AdS^{d+1}$ in a neighbourhood of $S$. The idea is to construct ``folded'' spacelike immersions using hipped hypersurfaces in $\AdS^{d+1}$.

\begin{defi} \label{def:foldedspacelikeimmersion}
A spacelike $\AdS$ structure  on $M^a$ is \emph{folded} if the image under the developing map of any connected component in the universal cover of $M^a$ of the complement of the  union of the lifts of $H_1,\dots, H_{2k}$ is included in a totally geodesic copy of $\HH^d$, and the induced metric on $M^a\setminus S$ is the original hyperbolic metric.
\end{defi}



The outcome of this discussion is therefore the following lemma.

\begin{lemma} \label{lm:equivalence}
Let $M^a$ be a Gromov--Thurston manifold of dimension $d$ with $a=\frac{k}{n}>1$.  There is a one-to-one correspondence between folded spacelike $\AdS$ structures on $M^a$ (up to equivalence) and hipped hypersurfaces in $\AdS^{d+1}$ with $2k$ wedges and wedge angles $\frac{\pi}{n}$ (up to isometry). This correspondence associates a convex spacelike $\AdS$ structure to a convex hipped hypersurface.
\end{lemma}


\begin{proof}
Let $\dev:\tilde M_a \to \AdS^{d+1}$ be the developing map of a folded spacelike AdS structure on $M^a$, let $U$ be a (sufficiently small) neighbourhood of a lift to $\widetilde{M^a}$ of a point of the stem $S$. Then, by definition of a folded structure, $\dev(U)$ is an open subset of a hipped hypersurface with $2k$ wedges, with wedge angles $\frac{\pi}{n}$.\\

Conversely, consider a  convex hipped hypersurface $\Sigma=\bigcup_{i=1}^{2k}X_i\subset \AdS^{d+1}$ with wedge angles~$\frac{\pi}{n}$. Denote by $V_i$ the connected component of $M^a\backslash \bigcup_{i=1}^{2k}H_i$ which is bounded by $H_i$ and $H_{i+1}$. Define an atlas of spacelike charts on $M^a$ in the following way:
\begin{itemize}
\item If $x\in M^a \backslash \bigcup_{i=1}^{2k} H_i$, choose a small ball $U_x$ around $x$ that does not intersect any of the $H_i$ and define a chart $\phi_x: U_x \to \AdS^{d+1}$ mapping $U_x$ isometrically onto a spacelike hyperplane in $\AdS^{d+1}$.
\item If $x\in H_i\backslash S$, choose a small ball $U_x$ around $x$ that does not intersect $S$, and define a continuous chart $\phi_x: U_x \to \AdS^{d+1}$ mapping isometrically $U_x\cap V_{i-1}$ into $X_{i-1}$, $U_x\cap V_i$ into $X_i$ and $U_x \cap H_i$ into $Y_i$.
\item If $x\in S$, choose a small ball $U_x$ around $x$, and define a continuous chart $\phi_x: U_x \to \AdS^{d+1}$ $x$ mapping isometrically $U_x\cap V_i$ into $X_i$ and $U_x\cap V_i$ into $Y_i$ for all $i$.
\end{itemize}
One easily verifies that the properties of the charts $\phi_x$ characterize them up to an isometry of $\AdS^{d+1}$. Hence they form the atlas of a folded spacelike AdS structure. Moreover, the neighbourhood of any point in the stem $x$ is mapped to a neighbourhood of the stem of the hipped hypersurface $\Sigma$, showing that this construction is a converse to the previous one.\\

Finally, through this correspondence, it is clear that a past convex hipped hypersurface is associated to a locally convex spacelike structure, hence a convex spacelike structure by Proposition\ref{p: Local convexity -> convexity}.
\end{proof}

We can now combine everything to prove the main theorems of the paper.

\begin{proof}[Proof of Theorems \ref{tm:existence_ads} and  \ref{tm:dim_ads}]
Let $M^a$ be a Gromov--Thurston manifold of dimension $d$ with $a=\frac{k}{n}>1$. 

By Proposition \ref{prop: Existence polygons prescribed lengths}, there exists a convex spacelike polygon $p$ in $\dS^2$ with $2k$ sides of length~$\frac{\pi}{n}$. By Lemma \ref{lm:dSpolygonsToAdSpolyhedra}, $p$ defines a convex hipped hypersurface $\Sigma_p$ in $\AdS^{d+1}$. By Lemma \ref{lm:equivalence}, $\Sigma_p$ defines a convex folded spacelike AdS structure $(\dev_p,\rho_p)$ on $M^a$. Finally, by Theorem \ref{pr:spacelike}, this folded hyperbolic structure defines an embedding of $M_a$ as a Cauchy hypersurface in a GHMC AdS manifold $N_{\rho_p}$. This already proves Theorem \ref{tm:existence_ads}.\\

We obtain a map $p \to N_{\rho_p}$ from $\mathcal P_{2k}^{\frac \pi n}(\dS)$ to the deformation space of GHMC AdS $d+1$-manifolds. Moreover, for each $p \in \mathcal P_{2k}^{\frac \pi n}(\dS)$, the manifold $N_{\rho_p}$ contains a past-convex folded spacelike hypersurface isometric to $M_a$. In dimension $d+1 \geq  4$, folded spacelike hypersurfaces are ruled, hence $(\dev_p,\rho_p)$ is a convex ruled spacelike AdS structure. By Lemma \ref{lm:unique}, the image of $\dev_p$ is the future boundary of the convex core of $N_{\rho_p}$ and by Lemma \ref{lm:AdSquasifucsianConvexRuledSpacelikeStructure}, the map $p\mapsto N_{\rho_p}$ is injective. Finally, by Proposition \ref{prop:EquilateralPolygons-dS}, the space $\mathcal P_{2k}^{\frac \pi n}(\dS)$ is a manifold of dimension $k-3$. Hence the family of GHMC AdS manifolds $\left(N_{\rho_p},p\in  \mathcal P_{2k}^{\frac \pi n}(\dS) \right)$ proves Theorem \ref{tm:dim_ads}.

%
%
\end{proof}

\begin{remark}
Though we did not mention anything about the regularity of the map $p \mapsto N_{\rho_p}$, it is quite clear that this map should be continuous for an appropriate topolopogy on the space of GHMC AdS manifolds homeomorphic to $M^a \times \R$. We will discuss further these regularity questions in Section \ref{ss:Regularity}.
\end{remark}

\subsection{Hipped hypersurfaces in $\HH^{d+1}$ and polygons in $\bS^2$}


We now move on to establish a Riemannian version of Lemma \ref{lm:dSpolygonsToAdSpolyhedra}.

\begin{defi} \label{def:hipRiemannian}
A \emph{hipped hypersurface} is an oriented  topological hypersurface $H\subset \HH^{d+1}$ which is a finite union $H=\bigcup_{i=1}^k X_i$ of subsets with the following properties:
\begin{enumerate}
\item Each $X_i$ is a convex subset of a totally geodesic copy of $\HH^d$,
\item The relative boundary of $X_i$ is the union of two half-spaces $Y_i$ and $Y_{i+1}$ of totally geodesic copies of $\HH^{d-1}$,
\item $X_i\cap X_{i+1}=Y_{i+1}$ for all $i\in \{1,\dots,k\}$ (setting $X_{k+1}=X_1$ and $Y_{k+1}=Y_1$),
\item There is a totally geodesic copy $Z\approx \HH^{d-2}$ of $\HH^{d+1}$, called the \emph{stem}, such that $Y_i\cap Y_{i+1}=Z$ for all $i\in \{1,\dots,k\}$.
\end{enumerate}
 The \emph{dihedral angles} of $H$ are the angles between $X_i$ and $X_{i+1}$, and the \emph{wedge angles} are the  angles between $Y_i$ and $Y_{i+1}$. A hipped hypersurface is \emph{convex} if all its dihedral angles are non-negative (or, equivalently, if the component of $\HH^{d+1}\setminus H$ inducing the orientation of $H$ with the outward pointing normal is convex).
\end{defi}

 \begin{figure}[h] 
 \includegraphics[scale=0.8]{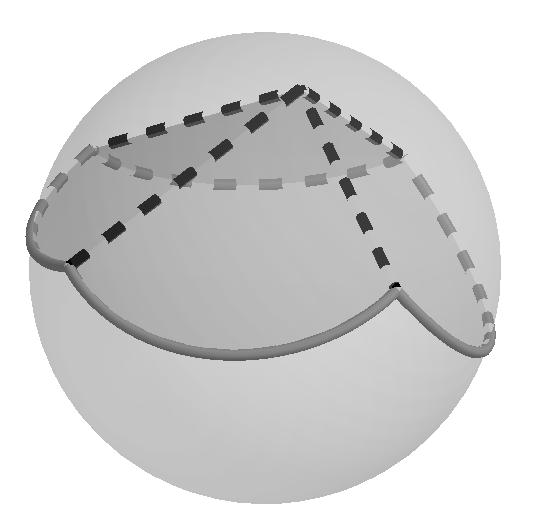}
 \caption{A hipped hypersurface in $\HH^3$.}
 \label{fig:hipped domain}
 \end{figure}

\begin{lemma} \label{lm:spherepolygonsToHyperbolicPolyhedra}
Let $\alpha_1,\dots,\alpha_{k}\in (0,\pi)$ and $\theta_1,\dots,\theta_k\in (-\pi,\pi)$. The set of hipped hypersurfaces $H\subset\HH^{d+1}$ with wedge angles $\alpha_1,\dots,\alpha_k$ and dihedral angles $\theta_1,\dots,\theta_k$ considered up to isometry is in one-to-one correspondence with spherical  polygons $p\subset \bS^2$ with side lengths $\alpha_1,\dots,\alpha_k$ and angles $\theta_1,\dots,\theta_k$ up to isometry. Through this correspondence, convex polygons are associated to convex hipped hypersurfaces.
\end{lemma}

\begin{proof}
  The first part of the proof is almost identical to that of Lemma \ref{lm:dSpolygonsToAdSpolyhedra}. Start with a hipped hypersurface   $H=\bigcup_{i=1}^k X_i\subset \HH^{d+1}$, let $Z\approx \HH^{d-2}$ be its stem and  consider the link $\mathcal L$  of $Z$. 
Fix $z\in Z$, so that $\mathcal L$ identifies with the set of unit vectors tangent to $\HH^{d+1}$ at $z$ and orthogonal to $Z$, and thus identify  $\mathcal L$ with $\bS^2$ as the unit vectors in $(T_zZ)^\perp$.

The intersection of $H$ with $\mathcal L$ is a polygon $p\subset \bS^2$. Its vertices $v_1,\dots,v_k$ are defined by  $v_i\in T_{z}Y_i\cap (T_{z}Z)^\perp$ and $\exp_z(v_i)\in Y_i$. The edges $[v_i,v_{i+1}]$ correspond to $T_{z}X_i\cap(T_{z}Z)^\perp$.


Starting with such a polygon $p\subset \bS^2$ with vertices $v_1,\dots,v_k$, we consider any point $z\in \HH^{d+1}$ and any totally geodesic copy $Z\subset \HH^{d+1}$ of $\HH^{d-2}$ containing $z$.  Identify $\bS^2$ with unit spacelike vectors in $(T_{z}Z)^\perp$, and consider the hipped hypersurface $H=\bigcup_{i=1}^rX_i\subset\HH^{d+1}$ where:
\begin{align*}
X_i&=\set{\exp_z(u+tw)}{u\in T_zZ \, ,~w\in [v_i,v_{i+1}]\, ,~ t\geq 0}~.
\end{align*}
Once again, these two constructions are inverse to each other.

In this correspondence, the length of the edge $[v_i,v_{i+1}]$ is given by the angle between $Y_i$ and~$Y_{i+1}$, i.e. $\alpha_i$. The angle at $v_i$ is equal the angle between $X_i$ and $X_{i+1}$, namely $\theta_i$. 

Both convexities are equivalent to the non-negativity of $\theta_1,\dots,\theta_k$, and are therefore concomitant.
\end{proof}

\subsection{Geometrization of Gromov--Thurston cone-manifolds for $a<1$}

We now consider a Gromov--Thurston manifold $M^a$ with $a=\frac{k}{n}<1$. 
We wish to construct a convex ruled hyperbolic embedding structure on $M^a$. Since $M^a\setminus S$ is a hyperbolic manifold, we only need to construct such a structure on a neighbourhood of $S$, and we will do so by using a hipped hypersurface.

\begin{defi} \label{def:foldedimmersion}
A  hyperbolic embedding structure on $M^a$ is \emph{folded} if the image under the developing map of any connected component in the universal cover of $M^a$ of the complement of the union of the lifts of $H_1,\dots, H_{2k}$ is included in a totally geodesic copy of $\HH^d$, and the induced metric on $M^a\setminus S$ is the original hyperbolic metric.
\end{defi}



The outcome of this discussion is therefore the following lemma.

\begin{lemma} \label{lm:equivalenceRiemannian}
Let $M^a$ be a Gromov--Thurston manifold of dimension $d$ with $a=\frac{k}{n}<1$.  There is a one-to-one correspondence between  folded hyperbolic embedding structures on $M^a$ (up to equivalence) and  hipped hypersurfaces in $\HH^{d+1}$ with $2k$ wedges and wedge angles $\frac{\pi}{n}$ (up to isometry). This correspondence associates a convex hyperbolic embedding structure to a convex hipped hypersurface.
\end{lemma}

\begin{proof}
The proof is almost identical to that of Lemma \ref{lm:equivalence}: 
the developing map of a folded hyperbolic embedding structure on $M^a$ sends a lift to $\widetilde M^a$ of the stem $S$ isometrically into  a  hipped hypersurface with $2k$ wedges, with wedge angles $\frac{\pi}{n}$.

Conversely, given a  hipped hypersurface $H=\bigcup_{i=1}^{2k}X_i$ consisting of $2k$ wedges with wedge angles $\frac{\pi}{n}$, one constructs a folded hyperbolic embedding structure on $M^a$ mapping locally isometrically $V_i$ into $X_i$, $H_i$ into $Y_i$ and $S$ into $\bigcap_{i=1}^{2k} Y_i$.


The two constructions are inverse to each other.
\end{proof}


\begin{proof}[Proof of Theorems \ref{tm:existence_hyp} and  \ref{tm:dim_hyp}]

The proof again follows closely that of Theorems \ref{tm:existence_ads} and \ref{tm:dim_ads}: 

Since every convex folded spacelike embedding in dimension $d+1\geq 4$ is ruled, Lemma \ref{lm:equivalenceRiemannian} defines a map 
\[p \mapsto (\dev_p, \rho_p)\]
from $\mathcal P_{2k}^{\frac \pi n}(\bS)$ to the set of convex ruled spacelike embeddings which are isometric to $M^a$. By Theorem \ref{lm:HyperbolicEndConvexRuledEmbeddingStructure}, every such spacelike embedding is the boundary of a unique hyperbolic end $N_{\rho_p}$. By Lemma \ref{prop: Existence polygons prescribed lengths}, $\mathcal P_{2k}^{\frac \pi n}(\bS)$ is non-empty, proving Theorem \ref{tm:existence_hyp}. By Lemma \ref{prop:EquilateralPolygons}, $\mathcal P_{2k}^{\frac \pi n}(\bS)$ is a manifold of dimension $k-3$, and the family of hyperbolic ends $\left( N_{\rho_p}, p\in \mathcal P_{2k}^{\frac \pi n}(\bS) \right)$ answers Theorem \ref{tm:dim_hyp}.

\end{proof}


\section{Fuchsian deformations in dimension 3+1} 
\label{sc:6}

Gromov--Thurston $3$-manifolds are irreducible, atoroidal and Haken. It thus follows from Thurston or Perelman's hyperbolization theorems that they also carry a smooth hyperbolic structure. Here we will give a simpler proof of this fact, showing moreover that the quasifuchsian AdS manifolds constructed in the previous sections are, in dimension $3+1$, deformations of Fuchsian manifolds. Barbot's conjecture thus remains open in dimension $3+1$. 

We also prove the same result for deformations to Fuchsian manifolds of the hyperbolic ends defined above (corresponding to Gromov--Thurston cone-manifolds of cone angle smaller than $2\pi$, still in dimension $3+1$). Both proofs are based on Hodgson--Kerckhoff's deformation theorem for conical hyperbolic $3$-manifolds.



\subsection{The Hodgson--Kerckhoff deformation theorem}

We recall first the Hodgson--Kerckhoff theorem, see \cite{HK}, for 3-dimensional hyperbolic cone-manifolds, which will be a useful tool for us in understanding deformations of quasifuchsian AdS spacetimes (or hyperbolic ends) in dimension $3+1$.

\begin{theorem}[Hodgson--Kerckhoff, \cite{HK}] \label{tm:hk}
  Let $M$ be a 3-dimensional hyperbolic manifold with cone singularities along a link $\gamma=\gamma_1\sqcup\cdots\sqcup \gamma_n$, with angle $\theta_i\in (0,2\pi)$ along $\gamma_i$, $1\leq i\leq n$. Then small deformations of $M$ among hyperbolic cone-manifolds with constant singular locus are parameterized by the variations of the cone angles $\theta_1, \cdots, \theta_n$. 
\end{theorem}

Note that this deformation result was extended to 3-dimensional hyperbolic cone-manifolds with singularities along a graph, still with angles less than $2\pi$, by Mazzeo and Moncouquiol \cite{mazzeo-montcouquiol}, see also \cite{weiss-local}.

The Hodgson--Kerckhoff deformation theorem leads quite naturally to a deformation result for the ``building blocks'' of Gromov--Thurston $3$-manifolds. This will in turn be used below to construct deformations of globally hyperbolic AdS spacetimes, or of hyperbolic ends, in dimension 3+1. We will actually need a more precise rigidity result which states that one can prescribe the cone angles as long as we remain in the range $(0,\pi)$.

\begin{theorem} \label{tm:orbifolds}
  Let $M$ be a 3-dimensional hyperbolic manifold with cone singularities along a link $\gamma=\gamma_1\sqcup\cdots\sqcup \gamma_n$, with angle $\theta_i\in (0,\pi)$ along $\gamma_i$, $1\leq i\leq n$. For every $\theta'_1, \cdots, \theta'_n\in (0,\pi)$, there is a unique one-parameter family $(g_t)_{t\in [0,1]}$ of hyperbolic structures on $M$ with cone singularities along the $\gamma_i$ of angle $(1-t)\theta_i+t\theta'_i$.
\end{theorem}

We refer the reader to \cite{boileau-porti} or \cite[Section 6.2]{boileau-leeb-porti} for a proof. Briefly, one can consider the subset of values of $t$ which can be achieved. Theorem \ref{tm:hk} shows that it is open in $[0,1]$, while a separate, compactness argument (using the condition that the angles are in $(0,\pi)$) shows that it is closed.

\subsection{Deformations towards the Fuchsian locus}

We now apply the previous results to the cyclic quotients of hyperbolic manifolds containing totally geodesic planes. We follow the notations of Section \ref{sc:2}, and consider a hyperbolic $3$-manifold $M$ with a diedral group of symmetries $D_n$ of order $2n$. We then denote by $\overline{M}$ the quotient of $M$ by the cyclic subgroup $R_n$ of $D_n$. Then $\overline{M}$ is a hyperbolic cone-manifold, with cone angle $2\pi/n$ along the projection of $S$ (which is still denoted by $S$). We also fix an integer $k\geq 2$, and let $a=k/n$. Theorem \ref{tm:orbifolds} readily gives the following:

\begin{cor} \label{cr:hk}
  Under those hypothesis, for $\epsilon>0$ sufficiently small, there exists a one-parameter family of hyperbolic metrics $( \bar g_\alpha)$ on $\overline{M}$ with a cone singularity along $S$ of cone angle $2\alpha$, for $\alpha$ ranging in the interval $(0,\frac \pi 2 + \epsilon)$. Moreover, this family is unique up to isotopy.
\end{cor}

We can then lift this deformation to a deformation of the manifold $M^a$, which is a cover of $\overline{M}$ of degree $k$ ramified over $S$.

\begin{cor} \label{cr:lift}
Still with the notations of above, for $\epsilon>0$ sufficiently small, there exists a one-parameter family of hyperbolic metrics $(g_\alpha)$ on $M^{\frac k n}$ with cone singularity along $S$ such that the hypersurfaces $H_i$ are geodesic, and such that $H_i$ and $H_{i+1}$ meet with an angle $\alpha$, for $\alpha$ ranging in the interval $(\frac \pi k -\epsilon,\frac \pi n]$ when $k > n $ and $[\frac\pi n , \frac \pi k + \epsilon)$ when $k<n$.
\end{cor}

\begin{proof}
Note that, for $k,n\geq 2$ the range of $\alpha$ in Corollary \ref{cr:lift} is contained in the range of $\alpha$ of Corollary \ref{cr:hk}. Thus, for $\alpha$ in the appropriate range, we can consider the deformation $(\bar g_\alpha)$ of Corollary \ref{cr:hk}. Let $\sigma$ denote the reflection of $\overline{M}$ induced by any reflection in $D_n$. Then $(\sigma^* g_\alpha)$ is another such deformation which coincides with $(\bar g_\alpha)$ for $\alpha = \frac \pi n$. By the uniqueness of Theorem~\ref{tm:hk}, up to isotoping $\bar g_\alpha$, we can thus assume that $\sigma^* \bar g_\alpha= \bar g_\alpha$ for all $\alpha$.\footnote{This point is not actually immediate since the uniqueness of the metric is only up to isotopy, but one can keep track of the involution $\sigma$ in Hodgson--Kerckhoff's proof to verify that, starting with a $\sigma$-invariant metric, it does produce a $\sigma$-invariant deformation of the metric.} Since $\sigma$ is the reflection along $\overline H_1 \cup \overline H_2$, we deduce that $\overline H_1$ and $\overline H_2$ are totally geodesic for $\bar g_\alpha$ and meet along $S$ with an angle $\alpha$. Then the pull-back $g_\alpha$ of $\bar g_{\alpha}$ to $M^{\frac k n}$ satisfies the required conditions. 
\end{proof}

Note that the metric $g_{\frac \pi n}$ is the original cone metric of the Gromov--Thurston manifold $M^{\frac{k}{n}}$ while the metric $g_{\frac \pi k}$ is a smooth hyperbolic metric. This proves in particular that Gromov--Thurston $3$-manifolds are hyperbolic.

\subsection{Deformation to Fuchsian manifolds}

We can now conclude the proof of Theorems \ref{tm:31_ads} and \ref{tm:31_hyp} by showing how, in dimension $3+1$, the folded convex AdS (resp. hyperbolic) structures that we considered in Section \ref{sec:geometrization} on a manifold $M^{\frac k n}$ with $k>n$ (resp. $k<n$) can be deformed to the structure associated to a Fuchsian AdS $4$-manifold (resp. a Fuchsian hyperbolic end).

\begin{proof}[Proof of Theorem \ref{tm:31_ads}]
Assume $k> n$. Let $I$ denote the interval $(\frac \pi k-\epsilon,\frac \pi n]$. For every $\alpha \in I$ there is a metric $g_\alpha$ on $M^{\frac k n}$ for which the hypersurfaces $H_i$ are totally geodesic and form angles $\alpha$ along~$S$. Repeating the proof of Theorem \ref{tm:dim_ads}, we can associate to every spacelike polygon $p\in \mathcal P_{2k}^\alpha(\dS)$ a folded spacelike AdS structure on $(M^{\frac k n}, g_\alpha)$, which defines a GHMC AdS manifold $N_p$ by Theorem~\ref{pr:spacelike}. We thus get a map from
\[\mathcal P_{2k}^I(\dS) \equaldef \bigsqcup_{\alpha \in I} \mathcal P_{2k}^\alpha (\dS)\]
to the deformation space of quasifuchsian AdS $4$-manifolds homeomorphic to $M^{\frac{k}{n}}\times\R$.

When the polygon $p$ is convex, the image of the folded spacelike embedding of $(M^{\frac k n}, g_\alpha)$ in $N_p$ is the future boundary of its convex core, and we deduce that the map $p\mapsto N_p$ is injective in restriction to convex polygons, as in the proof of Theorem \ref{tm:dim_ads}. 

Now, by Proposition \ref{prop:EquilateralPolygons-dS}, the set $\mathcal P_{2k}^I(\dS)$ is a manifold of dimension $2k-2$, which contains the codimension $1$ submanifold $\mathcal P_{2k}^{\frac{\pi}{n}}(\dS)$. By Proposition \ref{prop: Existence polygons prescribed lengths}, there is a continuous path $(p_\alpha)_{\alpha \in I}$ in $\mathcal P_{2k}^I(\dS)$ such that $p_\alpha$ is convex with side length $\alpha$. For $\alpha=\frac{\pi}{k}$, the polygon $p_\alpha$ is a spacelike geodesic in $\dS^2$ divided in $2k$ segments of length $\frac \pi k$,  the corresponding folded spacelike embedding is totally geodesic, and the GHMC AdS $4$-manifold $N_{p_{\frac \pi k}}$ is thus Fuchsian.

Hence the family $\left(N_p, p\in \mathcal P_{2k}^I(\dS) \right )$ satisfies the required properties, proving Theorem \ref{tm:31_ads}.

\end{proof}

\begin{proof}[Proof of Theorem \ref{tm:31_hyp}]
The proof proceeds in the same way as the proof of Theorem \ref{tm:31_ads}, except that we now associate hyperbolic ends $N_p$ to equilateral spherical polygons 
\[p \in \mathcal P_{2k}^I (\bS) \equaldef \bigsqcup_{\alpha \in I} \mathcal P_{2k}^\alpha (\bS)~,\]
for $\alpha$ ranging in the interval
\[I= \left[ \frac{\pi}{n}, \frac \pi k + \epsilon \right)~.\]
\end{proof}

\subsection{Integration of bending deformations} \label{ss:IntegrationBending}

Restricting the map $p\mapsto N_p$ to polygons with a central symmetry, one proves Theorems \ref{tm:bending3d-hyp} and \ref{tm:bending3d-ads}:

\begin{proof}[Proof of Theorem \ref{tm:bending3d-ads}]

The geodesic polygon $p_0$ with side length $\frac{\pi}{k}$ and angles $0$ belongs to the interior of $\mathcal P_{2k}^I(\dS)$, which thus contains an open neighbourhood $O$ of $p_0$ in $\mathcal P_{2k}^\sym(\dS)$. By Proposition~\ref{prop: angles symmetric polygons dS}, the map 
\[ \begin{array}{cccc}
\Theta : &O & \to & \R^k\\
& p & \mapsto & (\theta_i(p))_{1\leq i \leq k}
\end{array}\]
is, up to restricting $O$, a diffeomorphism onto an open neighbourhood $U$ of $0$ in $\R^k$.

For $\theta = (\theta_1,\ldots, \theta_k)$, define
\[N_\theta = N_{\Theta^{-1}(\theta_1,\ldots, \theta_k)}~.\]
Then, by construction, $N_\theta$ is a GHMC AdS $4$-manifold containing a Cauchy hypersurface homeomorphic to $M^a$ which is piecewise totally geodesic, folded along the $H_i$, and such that, for all $1\leq i \leq k$, the folding angle at $H_i$ and $H_{i+k}$ is $\theta_i$. The family $N_\theta$ thus satisfies the required properties.

\end{proof}

\begin{proof}[Proof of Theorem \ref{tm:bending3d-hyp}]
  The proof is identical to the proof of Theorem \ref{tm:bending3d-ads}.
\end{proof}

Let us now recall how the families of hyperbolic ends constructed in Theorem \ref{tm:bending3d-hyp} relate to the ``bending deformations'' constructed by Johnson and Millson.

Let $M$ be a hyperbolic manifold of dimension $d$ containing a smooth totally geodesic connected separating hypersurface $H$. In \cite{Johnson1987}, Johnson and Millson construct a $1$-parameter deformation of the Fuchsian representation $\rho_0: \pi_1(M) \to \SO_\circ (d,1)$ into $\SO_\circ (d+1, 1)$:

Let $M_1$ and $M_2$ denote the two components of $M\backslash H$. By Van Kampen's theorem, one can write the fundamental group of $M$ as an amalgamated product
\[\pi_1(M)= \pi_1(M_1) *_{\pi_1(H)} \pi_1(M_2)~.\]
Now, up to conjugation, $\rho_0(\pi_1(H))$ is contained in $\SO(d-1,1)$ and is thus centralized by a rotation subgroup isomorphic to $\SO(2)$. There is thus a unique representation
\[\rho_{H,t}: \pi_1(M) \to \SO_\circ (d+1,1)\]
such that
\[{\rho_{H,t}}_{\vert \pi_1(M_1)} = {\rho_0}_{\vert \pi_1(M_1)}\]
and 
\[{\rho_{H,t}}_{\vert \pi_1(M_2)} = r_t{\rho_0}_{\vert \pi_1(M_2)}r_{-t}~,\]
with $r_t$ the rotation of angle $t$ commuting with $\SO_\circ(d-1,1)$.  This deformation has a geometric interpretation: the representation $\rho_{H,t}$ is the holonomy of the folded hyperbolic embedding structure on $M$ which is isometric and totally geodesic on $M_1$ and $M_2$ and is folded along $H$ with an angle $t$.

The same construction can be used to deform $\rho_0$ into $\SO_\circ (d,2)$. This time, the subgroup centralizing $\rho_0(\pi_1(H))$ is $\SO(1,1)$ and the deformations are holonomies of folded spacelike embedding structures on $M$.\\

Let us now return to the setting of Theorem \ref{tm:bending3d-ads}. The Gromov--Thurston $3$-manifold $M^a$ equipped with the smooth metric $g_{\frac \pi k}$ admits $k$ separating totally geodesic hypersurfaces $\hat H_i \equaldef H_i \cup H_{i+k}$, $1\leq i \leq k$. For each of these one gets a Johnson--Millson deformation $\rho_{\hat H_i, t}$ which is nothing but the holonomy of the quasifuchsian AdS manifold $N_{t \theta^i}$ where $\theta^i_j = \delta_{i,j}$. Theorem~\ref{tm:bending3d-ads} thus gives an example where Johnson--Millson's bending deformations along $k$ intersecting hypersurfaces fit into a $k$-parameter deformation family.

\subsection{Regularity of the map $p\mapsto N_p$} \label{ss:Regularity}

Until now, we have been very vague about the question of the regularity of our ``families of GHMC AdS manifolds'', i.e. about the regularity of the map $p\mapsto N_p$. Here we give a little bit more precisions on that.

Fix a Gromov--Thurston manifold of dimension $d$. Let us say that a  quasifuchsian AdS manifold of dimension $d+1$ is \emph{marked by $M^a$} if it is equipped with an isomorphism $\pi: \pi_1(M^a) \to \pi_1(N)$ induced by a spacelike embedding of $M^a$ in $N$. Via the holonomy representation, the set of quasifuchsian AdS $d+1$ manifolds marked by $M^a$ identifies with the open domain of convex-cocompact representations in $\Hom(\pi_1(M^a),\SO_\circ (d,2))/\SO_\circ(d,2)$.\footnote{While the quotient $\Hom(\pi_1(M^a),\SO_\circ (d,2))/\SO_\circ(d,2)$ might not be Hausdorff, one can prove that the subset of convex cocompact representations is Hausdorff.}
This gives a natural topology to the space of quasifuchsian AdS $d+1$ manifolds marked by $M^a$, and even an analytic structure.

Now, in the proof of Theorem \ref{tm:dim_ads}, Theorem \ref{tm:31_ads} and Theorem \ref{tm:bending3d-ads}, one associates to every spacelike de Sitter polygon $p$ in some family $\mathcal P$ (which is proven to be an analytic manifold in Section \ref{sc:polygons}) a certain spacelike AdS structure on $M^a$. Looking closely at the construction (see the proof of Lemma \ref{lm:equivalence}), one can verify that it can be defined by local charts that depend smoothly on the polygon $p$. Looking back at the  proof of Corollary \ref{cor-developing map spacelike AdS structure}, one deduces that this family of spacelike AdS structures is given by a family of pairs $(\dev_p, \rho_p)$ where $\dev_p$ is a developing map and $\rho_p$ a holonomy representation depending smoothly on $p$. The representation $\rho_p$ is the holonomy of the corresponding marked quasifuchsian AdS manifold $N_p$. In this sense we can say that the map $p\mapsto N_p$ is smooth.

Let $p$ be a point in $\mathcal P$. Denote by $\Ad_{\rho_{p}}$ the composition of $\rho_{p}: \pi_1(M^a) \to \SO_\circ (d,2)$ with the adjoint representation of $\SO_\circ(d,2)$. The cohomology group
\[\mathrm H^1 (\pi_1(M^a), \Ad_{\rho_{p}})\]
is the tangent space to the character stack $\Hom(\pi_1(M^a), \SO_\circ(d,2))/ \SO_\circ(d,2)$ at $\rho_{p}$. In particular, the derivative of the map $p\mapsto \rho_p$ defines a linear map from $T_p \mathcal P$ to $\mathrm H^1 (\pi_1(M^a), \Ad_{\rho_{p}})$.\\

Let us now specify the discussion of the previous paragraph to the situation of Theorem\ref{tm:bending3d-ads}. We take as $\mathcal P$ a neighbourhood of $p_0$ in the set of equilateral polynomials with a central symmetry, which is diffeomorphic to a small neighbourhood $U$ of $0$ in $\R^k$. We thus obtain a smooth map
\[ \begin{array}{cccc}
\mathrm R: & U & \to & \Hom(\pi_1(M^a),\SO_\circ (3,2))/ \SO_\circ (3,2)\\
& \theta & \mapsto & \rho_\theta \equaldef \rho_{p_\theta}~.
\end{array}\]
Its derivative at $0$ is a linear map
\[\mathrm d_0 \mathrm R: \R^k \to \mathrm H^1(\pi_1(M^a), \Ad_{\rho_0})~,\]
where $\rho_0$ is the Fuchsian representation of $\pi_1(M^a)$.

Noting as above $\theta^i \in \R^k$ the vector such that $\theta^i_j = \delta_{i,j}$, we have in particular that $\mathrm d_0 \mathrm R (\theta^i)$ is the derivative of Johnson--Millson's bending deformation along $\hat H_i$. The image of $\mathrm d_0 R$ is the subspace of $\mathrm H^1(\pi_1(M^a), \Ad_{\rho_0})$ spanned by these infinitesimal bending deformations.

In general, the character stack $\Hom(\pi_1(M^a),\SO_\circ (d,2))/ \SO_\circ (d,2)$ needs not be smooth at $\rho_0$, and not every vector in $H^1(\pi_1(M^a), \Ad_{\rho_0})$ need to be the derivative of an actual deformation of $\rho_0$ in $ \SO_\circ (d,2)$. At one extreme, one could imagine that $\Hom(\pi_1(M^a),\SO_\circ (d,2))/ \SO_\circ (d,2)$ is a union of $k$ curves intersecting at $\rho_0$, corresponding to the $k$ bending deformations.

However, Theorem \ref{tm:bending3d-ads} shows that it is not the case in dimension $3+1$. The existence of the map $\Phi$ shows that any linear combination of the infinitesimal bending deformations along the hypersurfaces $\hat H_i$ can be integrated into an actual deformation.

\section{Initial singularities}
\label{sc:initial}

\subsection{Dualities}

Before proving Theorems \ref{tm:initial-ads} and \ref{tm:initial-ds}, we recall a basic notion of duality (or polarity) in $\AdS^{d+1}$, or between $\HH^{d+1}$ and $\dS^{d+1}$. A good description of the duality for hyperbolic polyhedra can be found in \cite{HR}, and an extension to other constant curvature pseudo-Riemannian spaces can be found e.g. in \cite{shu}. 

\subsubsection*{The duality between $\HH^{d+1}$ and $\dS^{d+1}$}

Recall that the hyperbolic $d+1$-dimensional space can be defined as a quadric in the Minkowski space of dimension $d+2$, denoted here as $\R^{d+1,1}$. This Minkowski space is $\R^{d+2}$ equipped with the bilinear symmetric form:
$$ \langle x,y\rangle_{d+1,1} =  \sum_{i=1}^{d+1} x_iy_i-x_{d+2}y_{d+2}~. $$
The hyperbolic space can then be defined as
$$ \HH^{d+1} = \{ x\in \R^{d+1,1}~|~\langle x,x\rangle_{d+1,1}=-1~,~x_{d+2}>0 \}~, $$
equipped with the induced metric. In the same space, we can consider the de Sitter space, defined as
$$ \dS^{d+1} = \{ x\in \R^{d+1,1}~|~\langle x,x\rangle_{d+1,1}=1 \}~, $$
again with the induced metric. It is a geodesically complete Lorentzian space of constant curvature~$1$, simply connected if $d\geq 2$.

Let $x\in \HH^{d+1}$, and let $x^\perp$ be the hyperplane in $\R^{d+1,1}$ orthogonal to $x$. Since $x$ is timelike, its orthogonal $x^\perp$ is spacelike, and its intersection with $\dS^{d+1}$ is a totally geodesic, spacelike hyperplane, which we denote by $x^*$. Conversely, any spacelike hyperplane $H$ in $dS^{d+1}$ is the intersection of $\dS^{d+1}$ with a hyperplane $\bar H$ of $\R^{d+1,1}$ containing $0$. This hyperplane $\bar H$ is orthogonal to a unique unit, future-oriented timelike vector, which we denote $H^*$. This construction provides a one-to-one correspondence between points in $\HH^{d+1}$ and (un-oriented) spacelike totally geodesic hyperplanes in $\dS^{d+1}$.

Similarly, given $y\in \dS^{d+1}$, the intersection $y^\perp\cap \HH^{d+1}$ is an {\em oriented} totally geodesic hyperplane in $\HH^{d+1}$, which we denote by $y^*$. And conversely, if $H\subset \HH^{d+1}$ is any oriented totally geodesic hyperplane, then it is the intersection with $\HH^{d+1}$ of an oriented hyperplane in $\R^{d+1,1}$ containing $0$. The oriented unit normal to this hyperplane is a point in $\dS^{d+1}$, which we denote by $H^*$. 

This duality relation has several important consequences.
\begin{itemize}
\item Two oriented hyperplanes $H,H'\subset \HH^{d+1}$ intersect if and only if the dual points $H^*, H'^*$ are connected by a spacelike geodesic segment. The angle between $H$ and $H'$ is then the length of the segment connecting $H^*$ to $H'^*$.
\item The intersection angle between two hyperplanes $H, H'\subset \dS^{d+1}$ is equal to the distance between the dual points $H^*, H'^*\subset \HH^{d+1}$.
  \item For all $x\in \HH^{d+1}$, $(x^*)^*=x$, and similarly for $y\in \dS^{d+1}$.
\end{itemize}
This duality relation extends to convex polyhedra (see \cite{HR}) and to smooth, strictly convex surfaces (see e.g. \cite{shu}).

\subsubsection*{The duality between points and hyperplanes in $\AdS^{d+1}$}

The $d+1$-dimensional anti-de Sitter space $\AdS^{d+1}$ can be defined as a ``pseudo-sphere'' in the flat space $\R^{d,2}$ of signature $(d,2)$. Specifically, $\R^{d,2}$ can be defined as $\R^{d+2}$ equipped with the bilinear symmetric form
$$ \langle x,y\rangle_{d,2} = \sum_{i=1}^{d} x_iy_i - x_{d+1}y_{d+1}-x_{d+2}y_{d+2}~, $$
and
$$ \AdS^{d+1} = \{ x\in \R^{d,2}~|~ \langle x,x\rangle_{d,2} = -1\}~. $$
It is a geodesically complete Lorentzian space of constant curvature $-1$.

Let $x\in \AdS^{d+1}$. Its orthogonal $x^\perp$ is an oriented hyperplane in $\R^{d,2}$ of signature $(d,1)$, which therefore intersects $\AdS^{d+1}$ along a spacelike totally geodesic oriented hyperplane, denoted by $x^*$. As above, the same construction works, conversely, to associate to any totally geodesic spacelike oriented hyperplane a dual point.

There is an ``intrinsic'' definition of this duality: the hyperplane $x^*$ dual to a point $x$ is the totally geodesic plane composed of points at time distance $\pi/2$ from $x$ in the future.

This duality has the same properties as the duality between $\HH^{d+1}$ and $\dS^{d+1}$.

\subsection{Initial singularities of de Sitter spacetimes}

GHMC de Sitter spacetimes can be defined in the same way as GHMC anti-de Sitter spacetimes (see section \ref{subsec: globally hyperbolic spacetimes}).
We  briefly describe the duality between GHMC de Sitter spacetimes and hyperbolic ends (additional details and proofs can be found in \cite{kulkarni-pinkall,scannell}), and then outline how this duality leads to the proof of Theorem \ref{tm:initial-ds}. 

Let us give more  details about the correspondence established in \cite{kulkarni-pinkall} between flat conformal structures on a manifold $M$ with non virtually Abelian fundamental group and hyperbolic ends with pleated boundary homeomorphic to $M$ that was  mentioned in Section \ref{subsec:hyperbolic ends}. Consider first a hyperbolic end $E$ with pleated boundary homeomorphic to $M$. Then the ideal boundary  $\partial_\infty E$ of $E$ is diffeomorphic to $M$ and equipped with a flat conformal structure $c$. It is proved in \cite{kulkarni-pinkall} that (in dimension $d\geq 3$) $E$  is uniquely determined by $c$.  More specifically, the  pleated boundary $\partial_0E$ of $E$ is equipped with a stratification in ideal polyhedra of varying dimensions between $1$ and $d$, while $(M,c)$ also has a natural stratification, with each point contained in the interior of the ``convex hull'' of the boundary of a unique maximal round ball. There is a natural map from $\partial_\infty E$ to $\partial_0E$, preserving the stratification, in the sense that each strata of the stratification of $\partial_\infty E$ is sent homeomorphically to a strata of $\partial_0E$ (but many strata of $\partial_\infty E$ can be sent to the same strata of $\partial_0E$). In fact, round balls in $\partial_\infty \HH^{d+1}$ are in one-to-one correspondence with oriented hyperplanes in $\HH^{d+1}$, and the strata of $(M,c)$ are in one-to-one correspondence with the support hyperplanes of $\partial_0E$. 

A similar construction is provided by Scannell \cite[Section 4]{scannell} for  GHMC de Sitter spacetimes. Namely, if $N$ is such a spacetime, diffeomorphic to $M\times \R$, where $M$ is again a closed $d$-dimensional manifold, then its future asymptotic boundary is equipped with a flat conformal structure $c$, and this flat conformal structure again uniquely determines the  GHMC structure on $N$. If the fundamental group of $M$ is not virtually Abelian,  GHMC de Sitter structures on $N$ are therefore in one-to-one correspondence with hyperbolic ends diffeomorphic to $M\times \R$. 

The stratification of $(M,c)$ is directly related to the initial singularity of $N$, which we denote by $\partial_0N$. Round disks in $\partial_\infty \HH^{d+1}$ are in one-to-one correspondence with points in $\dS^{d+1}$, and each stratum of $(M,c)$ determines a unique point in the initial singularity of $N$. The points that are obtained in this way are exactly those where the boundary of $M$ admits a spacelike supporting hyperplane, see  \cite[Section 4]{scannell}.

Summing up the relations, each hyperbolic end structure on $M\times \R$ is determined uniquely by a flat conformal structure on $M$, which in turns determines a unique  GHMC de Sitter spacetime. Hyperbolic ends are therefore in one-to-one correspondence with  GHMC de Sitter spactimes. Moreover, each support hyperplane of $\partial_0E$ corresponds to a point of $\partial_0 N$ where it admits a spacelike support plane, and conversely. 

This correspondence is somewhat easier to visualize when the developing $\dev$ of the conformal structure is injective. Then
$$ E = (\HH^{d+1}\setminus CH(\partial_{\infty}\HH^{d+1} \setminus \dev(\tilde M)))/\rho(\pi_1M)~, $$
where $\rho:\pi_1M\to SO(d+1,1)$ is the holonomy representation of $(M,c)$, while $\tilde N$ is a {\em domain of dependence}, that is, the intersection of the half-spaces containing $\bS^d$ and bounded by a hyperplane tangent to $\bS^d$ at a point of $\Lambda_\rho$, the limit set of $\rho$. The initial singularity of $\tilde N$ is then the set of points dual to the support hyperplanes of $CH(\partial_{\infty}\HH^{d+1} \setminus \dev(\tilde M))$.

Suppose now that $E$ is a hyperbolic end such that $\partial_0E$ is folded, that is, it is the union of $2k$ $d$-dimensional totally geodesic polyhedra meeting pairwise along a $(d-1)$-dimensional face and which all share a $(d-2)$-dimensional face $S$. Let $N$ be the dual convex GHMC de Sitter spacetime. The initial singularity of $N$ is then particularly simple:
\begin{itemize}
\item each $d$-dimensional polyhedron in $\partial_0E$ corresponds to a vertex of $\partial_0N$,
\item each $(d-1)$-dimensional intersection hypersurface in $\partial_0E$ corresponds to an edge of $\partial_0N$,
  \item the ``spine'' $S$ corresponds to a $2$-dimensional face of $\partial_0N$.
\end{itemize}
Theorem \ref{tm:initial-ds} follows from Theorem \ref{tm:existence_hyp}, and of the construction used in its proof, through this correspondence.

\subsection{Initial singularities of AdS spacetimes}

A similar description applies in the anti-de Sitter setting. It is somewhat simpler because, in the AdS case, any quasifuchsian AdS spacetime is the quotient of a domain of dependence by the image of a representation into $\SO_\circ(d,2)$, as proved by Mess \cite{mess}.

A quasifuchsian AdS spacetime $N$ contains a smallest non-empty closed convex subset, its convex core $C(N)$. The past of the future boundary $\partial_+C(N)$ of the convex core is the union of the timelike geodesic segments of length $\pi/2$ orthogonal to support planes of $C(N)$ along $\partial_+C(N)$ towards the past. Similarly the future of the past boundary $\partial_-C(N)$ is the union of timelike segments of length $\pi/2$ orthogonal to support planes of $C(N)$ along $\partial_-C(N)$ towards the future.

Moreover, the universal cover $\tilde N$ of $N$ is isometric to a convex domain in $\AdS^{d+1}$ which is a domain of dependence, that is, the set of points $x$ in $\AdS^{d+1}$ such that all timelike geodesics through $x$ intersect the lift of any Cauchy hypersurface in $N$. In this picture, $\tilde C(N)$ is the convex hull of the asymptotic boundary of the lift to $\AdS^{d+1}$ of any Cauchy surface.

As a consequence, $\partial_+C(N)$ is dual to the initial singularity of $N$, while $\partial_- C(N)$ is dual to the final singularity of $N$. As in the de Sitter case, the description is simpler when $\partial_+C(N)$ is folded, since in this case the intial singularity of $N$ is a 2-dimensional complex with vertices corresponding to the maximal dimension faces of $\partial_+C(N)$, edges corresponding to the hypersurfaces along which the maximal dimension faces meet, and one totally geodesic 2-dimensional face dual to the codimension 2 ``spine''. 

Theorem \ref{tm:initial-ads} follows from Theorem \ref{tm:existence_ads}, and of the construction used in its proof, through this correspondence.


\section{Compact Clifford--Klein forms}
\label{sc:clifford}

In this section we explain why quasifuchsian AdS manifolds of dimension $2d+1$ provide compact quotients of the pseudo-Riemannian symmetric space $\O(2d,2)/\U(d,1)$. We prove that these compact quotients admit a smooth fibration over a manifold of dimension $2d$, with fibers isomorphic to the compact subspace $\O(2d)/\U(d)$.

\subsection{From GHC manifolds to compact quotients}

Benoist \cite{Benoist96} and Kobayashi \cite{Kobayashi92} independently gave a necessary and sufficient criterion for a discrete subgroup $\Gamma$ of a semisimple Lie group $G$ to act properly discontinuously on a reductive homogeneous space $G/H$, in terms of the Cartan projections of $\Gamma$ and $H$. This criterion bares a strong resemblance with the \emph{Anosov property} of the group $\Gamma$ as reformulated by Gu\'eritaud--Guichard--Kassel--Wienhard \cite{GGKW17} and Kapovich--Leeb--Porti \cite{KLP}. As an application, the first group of authors remarked the following:

\begin{theorem} \label{t:GHMC->CompactQuotient}
Let $\Gamma \backslash \Omega$ be an $\AdS$ quasifuchsian spacetime of dimension $2d+1$. Then the group $\Gamma$ acts properly discontinuously and cocompactly on the pseudo-Riemannian symmetric space $\O(2d,2)/\U(d,1)$.
\end{theorem}

\begin{proof}[Outline of the proof]
The group $\Gamma$ is a projective Anosov subgroup of $\O(2d,2)$ (see \cite{barbot-merigot}). In this situation, it implies that $\Gamma$ satisfies the Benoist--Kobayashi criterion and thus acts properly discontinuously on $\O(2d,2)/\U(d,1)$. The cocompactness comes from a cohomological dimension argument: the space $\O(2d,2)/\U(d,1)$ has dimension $d(d+1)$ and deformation retracts on the compact symmetric space $\O(2d)/\U(d)$, of dimension $d(d-1)$. By a classical application of the Leray--Serre spectral sequence, it follows that a group acting properly discontinuously on $\O(2d,2)/\U(d,1)$ has virtual cohomological dimension at most $2d$, with equality if and only if its action is cocompact. On the other hand, $\Gamma$ acts properly discontinuously and cocompactly on a complete spacelike hypersurface in $\AdS^{2d+1}$, which is diffeomorphic to a disc of dimension $2d$. Hence $\Gamma$ has cohomological dimension $2d$ and thus acts cocompactly on $\O(2d,2)/\U(d,1)$.
\end{proof}

Theorem \ref{t:GHMC->CompactQuotient} provides a motivation to better understand the relationship between the $\AdS$ quasifuchsian manifold $\Gamma \backslash \Omega$ and the corresponding compact quotient $\Gamma \backslash \O(2d,2)/\U(d,1)$. In the following subsections, we explain that $\Gamma \backslash \O(2d,2)/\U(d,1)$ can be seen as a fiber bundle with ``geometric fibers'' over a strongly convex Cauchy hypersurface of $\Gamma \backslash \Omega$.

\subsection{Geodesic Killing fields}

We first start by understanding the relationship between $\AdS^{2d+1}$ and the space $\mathrm O(2d,2)/\U(d,1)$.\footnote{Note that, since $\SO_\circ(2d,2)$ has finite index in $\mathrm O(2d,2)$, acting properly discontinuously and cocompactly on $\mathrm O(2d,2)/\U(d,1)$ and on $\SO_\circ(2d,2)/\U(d,1)$ are equivalent.} For this, let us recall the definition and basic properties of a \emph{geodesic Killing field}.

Let $(M,g)$ be a smooth connected pseudo-Riemannian manifold and denote by $\nabla$ its Levi--Civita connection. Recall that a vector field $X$ on $M$ is a \emph{Killing field} if its flow preserves the metric $g$. Killing fields are characterized by the following property:

\begin{prop}{\normalfont(see for instance \cite[Proposition 9.25]{oneill1983semiriemannian})}
A vector field $X$ on $(M,g)$ of class $\mathcal C^1$ is a Killing field if and only if the tensor $\nabla X \in \End(TM)$ is antisymmetric with respect to $g$, i.e.
\[g(Y, \nabla_Z X) = - g(X, \nabla_Z Y)\]
for all vector fields $Y$ and $Z$.
\end{prop}

A vector field $X$ on $(M,g)$ is \emph{geodesic} if the orbits of its flow are geodesics. Equivalently, $X$ is geodesic if it satisfies
\[\nabla_X X = 0~.\]
For Killing fields, we have the following characterization:

\begin{prop} \label{p:GeodesicKillingFieldsConstantNorm}
Let $X$ be a Killing field on $(M,g)$. Then $X$ is geodesic if an only if $g(X,X)$ is constant on $M$.
\end{prop}

\begin{proof}
For any vector field $Y$, we have
\begin{eqnarray*}
d_Yg(X,X) & = & 2 g(X, \nabla_YX) \quad \textrm{since $\nabla$ preserves $g$}\\
& = & -2 g(Y,\nabla_X X) \quad \textrm{since $X$ is Killing.}
\end{eqnarray*}
Thus $g(X,X)$ is constant if and only if $\nabla_X  X= 0$.
\end{proof}

Let us now turn to the case where $(M,g)$ is the anti-de Sitter space of dimension $2d+1$. In that case, there is a natural isomorphism
\[ u \mapsto X^u\]
between the Lie algebra $\so(2d,2)$ of $\mathrm O(2d,2)$ and the Lie algebra of Killing fields on $\AdS^{2d+1}$. In concrete terms, we see $\so(2d,2)$ as a Lie subalgebra of the space of square matrices of size $2d+2$. Every $u\in \so(2d,2)$ defines a linear vector field $\hat X^u$ on $\R^{2d,2}$, defined by:
\[\hat X^u(x) = u \cdot x~.\]
This vector field is tangent to the quadric
\[\{\mathbf q(x) \equaldef x_1^2+\ldots + x_{2d}^2 - x_{2d+1}^2 - x_{2d+2}^2 = -1\}\equaldef \AdS^{2d+1}\]
and thus restricts to a vector field $X^u$ on $\AdS^{2d+1}$

We say that a Killing field $X$ on $\AdS^{2d+1}$ is \emph{timelike unitary} if it satisfies $g(X,X) = -1$ (it is then necessarily geodesic by Proposition \ref{p:GeodesicKillingFieldsConstantNorm}). The main purpose of this subsection is the following description of timelike unitary Killing fields. 
 
\begin{lemma}
Let $u$ be a vector in the Lie algebra of $\so(2d,2)$. Then the corresponding Killing field $X^u$ is timelike unitary if and only if $u^2 = -\Id$. The space of timelike unitary Killing fields is therefore equivariantly isomorphic to the homogeneous space $\mathrm O(2d,2)/\U(d,1)$.
\end{lemma}

\begin{proof}
Let $\hat \nabla$ denote the standard flat connection on $\R^{2d,2}$, that is, the Levi--Civita connection of the flat pseudo-Riemannian metric $\mathbf q$. For any $u,v\in \so(2d,2)$, we have
\begin{eqnarray*}
	\hat \nabla_{\hat X^u} \hat X^v(x) &=& \left. \frac{\mathrm d}{\mathrm d t}\right |_{t=0}\hat X^v(x+ t u\cdot x)\\
	&= & \left. \frac{\mathrm d}{\mathrm d t}\right |_{t=0} v \cdot x + t vu \cdot x\\
	&=& vu \cdot x\\
	&=& \hat X^{vu}(x)~.
\end{eqnarray*}

Now, since $\AdS^{2d+1}$ is a submanifold equipped with the restricted metric, its Levi--Civita connexion $\nabla$ is the orthogonal projection of $\hat \nabla$ to $T \AdS^{2d+1}$. Since $T_x \AdS^{2d+1} = x^\perp$, we get that$ \nabla_{X^u} X^u(x) = 0$ if and only if $\hat \nabla_{\hat X^u} \hat X^u(x) = \hat X^{u^2}(x)= u^2 \cdot x$ is colinear to $x$.

Since $u$ is linear, $u^2 \cdot x$ is colinear to $x$ for every $x$ if and only if $u^2 \in \R \Id$, and we conclude that the Killing field $X^u$ is a geodesic if and only if 
\[u^2 = \lambda \Id\]
for some $\lambda\in \R$.

If $u^2 = \lambda \Id$, then at every point $x\in \AdS^{2d+1}$ we have
\begin{eqnarray*}
 g_\AdS (X^u(x), X^u(x)) & = & \langle u\cdot x,u\cdot x\rangle_{2d,2} \\
&=& - \langle u^2\cdot x, x\rangle_{2d,2} \\
&=& - \lambda \langle x,x\rangle_{2d,2} \\
&=&\lambda~.
\end{eqnarray*}

Hence $X^u$ is timelike and unitary if and only if $\lambda = -1$, i.e. $u^2 = -\Id$.\\

Each such $u$ defines a complex structure on $\R^{2d,2}$ compatible with the metric $\mathbf q$. There is thus a unique pseudo-Hermitian form $\mathbf h_u$ on $(\R^{2d,2}, u)$ such that $\Re(\mathbf h_u) = \mathbf q$. This Hermitian form has complex signature $(d,1)$, and the subgroup of $\mathrm O(2d,2)$ commuting with $u$ is the the group $\U(d,1)$.

Finally, given $u,v\in \so(2d,2)$ with $u^2 = v^2 = -\Id$, the pseudo-Hermitian spaces $(\R^{2d+2}, u, \mathbf h_u)$ and $(\R^{2d+2}, v, \mathbf h_v)$ are isomorphic (since they have the same dimension and signature). Hence there exists $g\in \GL(2d+2,\R)$ such that $gug^{-1} = v$ and $g^* \mathbf h_v = \mathbf h_u$. In particular, $g^* \mathbf q= \mathbf q$, hence $g\in \O(2d,2)$.

In conclusion, $\mathrm O(2d,2)$ acts transitively on the space of timelike unitary Killing fields, and the centralizer of such a Killing field is isomorphic to $\U(d,1)$. The space of timelike unitary Killing fields is thus isomorphic to the homogeneous space $\mathrm O(2d,2)/\U(d,1)$.
\end{proof}

\begin{remark}
Elaborating on the above proof, one could give a complete classification of geodesic Killing fields on $\AdS^{d+1}$:
\begin{itemize}
\item Lightlike Killing fields are given by elements $u\in \so(d,2)$ satisfying $u^2 = 0$ and exist for all $d\geq 1$,
\item Timelike geodesic Killing fields are given by elements $u\in \so(d,2)$ satisfying $u^2 = \lambda \Id$, $\lambda < 0$ and only exist for even $d$,
\item Spacelike geodesic Killing fields are given by elements $u\in \so(d,2)$ satisfying $u^2 = \lambda \Id$, $\lambda > 0$ and only exist for $d=2$.
\end{itemize}
\end{remark}

\subsection{Killing fields orthogonal to a strongly convex hypersurface}

Let $\mathcal H$ be a complete spacelike hypersurface in $\AdS^{2d+1}$ and $N$ its future-pointing unit normal. Let $g$ denote both the metric of $\AdS^{2d+1}$ and its restriction to $\mathcal H$. Recall that the \emph{second fundamental form} of $\mathcal H$ is given by
\[\secondFF(\cdot, \cdot) = g(\nabla_\cdot N, \cdot)~.\]

Recall from Definition \ref{df:unif-convex} that $\mathcal H$ is uniformly strongly (past) convex if there exists a constant $c>0$ such that $\secondFF + c g$ is negative definite.

\begin{lemma} \label{l:Orthogonal Killing field hypersurface}
Let $H$ be a uniformly strongly convex complete spacelike hypersurface in $\AdS^{2d+1}$ and $X$ a unitary timelike geodesic Killing field. Then $X$ is orthogonal to $\mathcal H$ at exactly one point.
\end{lemma}

\begin{proof}
Up to multiplying $X$ by $-1$, we can assume that $X$ is future pointing, so that $g(X,N) \leq -1$ with equality exactly where $X$ is orthogonal to $\mathcal H$.

Let us decompose $X$ along $\mathcal H$ as
\[X = \bar X + f N\]
where $\bar X$ is tangent to $\mathcal H$ and $f = - g(X,N) \geq 1$. Since $X$ is unitary, we have
\[g(\bar X, \bar X) = f^2 -1~.\]

Our goal is to prove that $f:\mathcal H\to [1,+\infty)$ achieves the value $1$ at a unique point. It will follow from the following three facts:
\begin{itemize}
\item[(a)] the function $f$ is proper,
\item[(b)] if $x$ is a critical point of $f$, then $f(x)= 1$,
\item[(c)] the points where $f=1$ are isolated.
\end{itemize}

We will then conclude by looking at the gradient flow of $f$.

\begin{itemize}
\item Proof of (a):\quad Fix a point $x_0 \in \mathcal H$, let $x$ be a point at distance $T$ from $x_0$ (for the restricted metric $g$) and let $\gamma: [0,T] \to \mathcal H$ be a unit speed geodesic from $x_0$ to $x$. The equation of geodesics on $\mathcal H$ can be written as:
\[\nabla_{\dot \gamma} \dot \gamma = \secondFF(\dot \gamma, \dot \gamma) N~,\]
where $\nabla$ is the ambient Levi--Civita connection of $\AdS^{2d+1}$. 

Consider the function 
\[\begin{array}{rccc}
h: & [0,T] & \to & \R \\
& t & \mapsto & g_{\gamma(t)}(\dot \gamma, X) = g_{\gamma(t)}(\dot \gamma, \bar X)~.
\end{array}\]
By the Cauchy--Schwarz inequality on $T\mathcal H$, we have
\begin{equation} \label{eq:h bounded by f}
h(t)^2 \leq g_{\gamma(t)}(\bar X,\bar X) = f^2(\gamma(t)) - 1~,
\end{equation}
which we can also write
\begin{equation} \label{eq:f bounded by h}
f(\gamma(t))\geq \sqrt{h(t)^2 +1}~,
\end{equation}

Deriving $h$ gives
\begin{eqnarray*}
h'(t) & = & g(\nabla_{\dot \gamma} \dot \gamma ,X) + g(\dot \gamma, \nabla_{\dot \gamma} X) \\
&=& g(\nabla_{\dot \gamma} \dot \gamma ,X) \quad \textrm{since $X$ is a Killing field}\\
&=& \secondFF(\dot \gamma, \dot \gamma) g(N,X) \\
&=& - f(\gamma(t)) \secondFF(\dot \gamma, \dot \gamma)\\
&\geq & c f(\gamma(t)) \quad \textrm{by uniform strict convexity of $\mathcal H$}\\
&\geq & c \sqrt{h(t)^2+1}~.
\end{eqnarray*}

We deduce that $h(t)$ is greater or equal to the solution of the ordinary differential equation
\[y'= c\sqrt{y^2+1}\]
whit initial condition $y(0)= h(0)$, i.e.
\[h(t) \geq \sinh\left(ct+ \sinh^{-1}(h(0)\right)~.\]
Using \eqref{eq:f bounded by h} and \eqref{eq:h bounded by f}, we conclude that
\[f(x) = f(\gamma(T)) \geq \cosh(cT-c')~,\]
where
\[c'= \sinh^{-1}(\sqrt{f(\gamma(0))^2 -1})~.\]
This shows that $f$ is proper.\\

\item Proof of (b): \quad let us compute the derivative of $f = - g(X,N)$ along $\bar X$. We have
\[df(\bar X) = - g(\nabla_{\bar X} X, N) - g(X, \nabla_{\bar X} N)~.\]

On one side, we have
\[g(\nabla_{\bar X} X, N) = g(\nabla_X X, N) - f g(\nabla_N X, N) = 0\]
since $X$ is Killing and geodesic.

On the other side, we have
\[g(X, \nabla_{\bar X} N) = g(\bar X, \nabla_{\bar X} N) = \secondFF(\bar X, \bar X)\]

since $d_{\bar X}g(N,N)=0$.
\[df(\bar X) = -\secondFF(\bar X, \bar X) \geq  c g(\bar X, \bar X)~.\]
At a critical point of $f$, we thus have $\bar X= 0$, i.e. $X$ is orthogonal to $\mathcal H$ and $f = 1$.\\

\item Proof of (c):\quad Let $\bar \nabla$ denote the Levi--Civita connection of the induced metric on $\mathcal H$. We have $\bar{\nabla} \bar X = \pi(\nabla \bar X)$ where $\pi$ denotes the orthogonal projection to $T\mathcal H$. For every vector $Y$ tangent to $\mathcal H$, we have
\begin{eqnarray*}
g_x(\bar{\nabla}_Y \bar X, Y) & = & g_x(\nabla_Y \bar X, Y) \\
&=&  g_x(\nabla_Y X, Y) - f g(\nabla_Y N, Y) - \mathrm d f(Y) g(N,Y)~.
\end{eqnarray*}
The first and third term vanish since $X$ is a Killing field and $Y$ is tangent to $\mathcal H$. We conclude that \[g(\bar \nabla_Y \bar X, Y) = -f \secondFF(Y,Y) \geq c g(Y,Y)~.\]

Hence $g(\bar{\nabla}_\cdot \bar X, \cdot)$ is symmetric and positive definite, which implies that $\bar \nabla \bar X \in \End(T\mathcal H)$ is invertible at every point. In particular, the zeros of $\bar X$, which are exactly the points where where $f = 1$, are isolated. 

\item Conclusion of the proof: \quad Consider the gradient flow of $f$, i.e. the flow of the vector field $-\Grad_g f$. Since $f$ is proper, every trajectory of the flow converges to a critical point of $f$. Since each critical point is a minimum and is isolated, its basin of attraction is non-empty and open. Since $\mathcal H$ is connected and decomposes as the disjoint union of these basins of attraction, there is exactly one such critical point, at which $f= 1$. This is the unique point where $X$ is orthogonal to $\mathcal H$.\\
\end{itemize}
\end{proof}

\subsection{Fiber bundle over convex Cauchy hypersurfaces}

Let now $\Gamma \backslash \Omega$ be an $\AdS$-quasifuchsian manifold of dimension $2d+1$. Let $\mathcal H$ be a smooth strongly convex Cauchy hypersurface in $\Omega$. Then $\mathcal H$ is the quotient of a $\Gamma$-invariant uniformy strongly convex hypersurface $\tilde{\mathcal H}$ in $\AdS^{2d+1}$. 

Recall that the homogeneous space $\mathrm O(2d,2)/\U(d,1)$ identifies with the space $\mathrm{Kill}_{-1}(\AdS^{2d+1})$ of timelike unitary Killing fields in $\AdS^{2d+1}$. To shorten notations, we will write
\begin{itemize}
\item $G= \mathrm O(2d,2)$,
\item $H= \U(d,1)$,
\item $K = \mathrm O(2d)\times \mathrm O(2)$,
\item $L = H\cap K = \U(d)\times \U(1)$.
\end{itemize}

Define
\[\tilde M = \{(x,X)\in \tilde \cH \times \mathrm{Kill}_{-1}(\AdS^{2d+1})\mid X \textrm { orthogonal to $\tilde \cH$ at $x$}\}~,\]
And let $p_1$ and $p_2$ denote respectively the projections from $\tilde M$ to $\tilde \cH$ and $\mathrm{Kill}_{-1}(\AdS^{2d+1})$. The projections $p_1$ and $p_2$ are clearly both $\Gamma$-equivariant.

\begin{prop} \label{p:fibers of p1}
Let $x$ be a point in $\tilde \cH$. Then $p_2(p_1^{-1}(x)) = g K/L$ for some $g\in G$.
\end{prop}

\begin{proof}
Since $G$ acts transitively on pairs $(x,H)$ consisting of a point $x\in \AdS^{2d+1}$ and a spacelike hyperplane $H\in T_x \AdS^{2d+1}$, it is enough to prove that the space of timelike unitary Killing fields orthogonal at $x_0 = (0,\ldots, 0,1)$ to the hyperplane $H_0 = \{(x_1,\ldots, x_d, 0,0)\}$ identifies with $K/L$ inside $G/H$.

Let $X^u$ be a timelike unitary Killing field orthogonal to $H_0$ at $x_0$, given by a matrix $u\in \so(2d,2)$ satisfying $u^2 = -\Id$ and $u(x_0) \in H_0^\perp$. Then $u$ preserves $H_0^\perp$ and $H_0$. It is thus conjugated by an element of $K$ to the image of the standard complex structure on $\R^{2d,2}$, centralized by $H$. Thus $X^u$ lies in the $K$-orbit of the basepoint of $G/H$. The converse is similar.
\end{proof}

We can now conclude the proof of the following theorem.

\begin{theorem}
Let $\Gamma \backslash \Omega$ be an $\AdS$ quasifuchsian manifold of dimension $2d+1$ and let $\mathcal H$ be a smooth strongly convex Cauchy hypersurface in $\Gamma \backslash \Omega$. Then there exists a $\Gamma$-equivariant fibration from $G/H$ to $\tilde{\mathcal H}$, the fibers of which are translates of $K/L$.

In particular, $\Gamma$ acts freely, properly discontinuously and cocompactly 
on $G/H$ and the quotient manifold $\Gamma \backslash G/H$ is a smooth fiber bundle over $\mathcal H$ with fibers diffeomorphic to $K/L$.
\end{theorem}

\begin{proof}
By Lemma \ref{l:Orthogonal Killing field hypersurface}, every timelike unitary geodesic Killing field is orthogonal to $\tilde{H}$ at a single point. It follows that $p_2: \tilde M \to G/H$ is a $\Gamma$-equivariant homeomorphism. Set
\[p \equaldef p_1\circ p_2^{-1}: G/H \to \tilde H~.\]
Then $p$ is $\Gamma$-equivariant.

Let $C$ be a closed ball in $\tilde{\mathcal H}$. Choose continuously, for every $x\in C$, an element $g_x \in G$ such that $g\cdot x_0 = x$ and $g \cdot H_0 = T_x \tilde{\mathcal H}$ (with the notations of Proposition \ref{p:fibers of p1}). By Proposition \ref{p:fibers of p1}, we have that \[p^{-1}(C) = \bigsqcup_{x\in C}  g_x K/L~.\]
This shows that $p$ is a topological fibration whose fibers are translates of the compact subspace $K/L$. In particular, $p$ is proper.

Now, since $\Gamma$ acts freely, properly discontinuously and cocompactly on $\tilde{\mathcal H}$, we deduce that $\Gamma$ acts freely, properly discontinuously, and cocompactly on $G/H$. The equivariant fibration $p$ then factors to a fibration $\Gamma \backslash G/H \to \mathcal H$ with fibers homeomorphic to $K/L$.
\end{proof}

\begin{remark}
Though we didn't discuss the regularity of $p_1$ and $p_2$, a little extra care would easily show that $p_1 \circ p_2^{-1}$ is a smooth submersion as soon as $\tilde H$ is smooth.
\end{remark}

\begin{remark}
In particular we gave a proof that $\Gamma$ acts properly discontinuously on $G/H$ which is independent of that of Gu\'eritaud--Guichard--Kassel--Wienhard, based on the intrinsic geometry of $\AdS$ quasifuchsian manifolds.
\end{remark}



\bibliographystyle{plain}
\bibliography{grothu}

\end{document}